             \title{A non-sequential arithmetical theory  with pairing}
                                                        \author{Juvenal Murwanashyaka}
                                                        \affil{Institute of Mathematics,  Czech Academy of Sciences,   Czech Republic}
                                                      \newtheorem{theorem}{Theorem} 
                                                      \newtheorem{lemma} [theorem] {Lemma}
                                                       \newtheorem{corollary} [theorem] {Corollary}
                                                      \newtheorem{definition} [theorem] {Definition} 
                                                       \newtheorem{open problem} [theorem] {Open Problem}
                                                           \newtheorem{open question} [theorem] {Open Question}
                 \renewcommand{\mathrm}{\mathsf}
\begin{document}

\maketitle

\begin{abstract}
Albert Visser 
has shown that Robinson's  $  \mathsf{Q} $ and Gregorczyk's $  \mathsf{TC} $  are not sequential by showing that these theories are not even poly-pair theories, which, in a strong sense, means these theories lack pairing.   
In this paper, we  use   Ehrenfeucht-Fra\"iss\'e games  to  show  that  the theory $  \mathsf{Q} + \Theta  $    we obtain by extending   Robinson's $  \mathsf{Q} $  with an axiom  $  \Theta $  which says that the map     $  \pi (x, y ) = (x+y)^2 + x $  is a pairing function 
is not sequential; in fact,   we show that this theory  is not   even  a Vaught theory.
As a corollary,  we get that the tree theory  $  \mathsf{T} $  of  [Kristiansen \& Murwanashyaka,  2020] is also not a Vaught theory. 
\end{abstract}

\section{Introduction}

The concept of  sequential theories was introduced by   Pudl\' ak  \cite{Pudlak1983} in the  study of  degrees of   local  interpretability. 
 Sequential  theories are   theories with a coding machinery, for  all objects in the domain of the theory,    sufficient for developing 
partial  satisfaction predicates for formulas of bounded depth-of-quantifier-alternations  (see Visser \cite{Visser2019}). 
Formally, a first-order theory  $T$ is  \emph{sequential}  if  it   directly interprets  adjunctive set theory  $  \mathsf{AS} $; 
in this paper,   direct interpretations are 1-dimensional unrelativized interpretations  with absolute equality.
A related but strictly weaker notion is the class of \emph{Vaught theories}, that is, 
theories that directly interpret  a weak set theory  $  \mathsf{VS}   $  of   Vaught    \cite{Vaught1967}, 
a non-finitely axiomatizable fragment of  $  \mathsf{AS} $. 
See Figure  \ref{AxiomsOfAS}  for the axioms of  $  \mathsf{AS} $  and   $  \mathsf{VS} $.

\begin{figure} 

\[
\begin{array}{r l  c  c r l  }
&{\large \textsf{The Axioms of } \mathsf{AS} }
&
\\
\\
\mathsf{AS}_1 
& 
 \exists x  \;  \forall y   \;   [    \   y \not\in x    \   ]   
\\
\mathsf{AS}_2 
& 
 \forall x y   \;  \exists z  \;    \forall w   \;   [    \   w \in z  \leftrightarrow   (    \    w \in x   \;  \vee   \;    w  =  y     \    )      \      ]   
 \\
 \\
 &{\large \textsf{The Axioms of } \mathsf{VS} }
&
\\
\\
\mathsf{VS}_0 
& 
  \exists x  \;  \forall y   \;   [       \   y \not\in x    \       ]  
\\
\mathsf{VS}_n 
& 
 \forall  x_1  \ldots  x_n    \;  \exists  y   \;   \forall z    \;   [     \   z \in y  \leftrightarrow  \bigvee_{ i = 1 }^{ n }   z = x_i       \        ] 
 \   \    \mbox{ for  }  0 < n <  \omega     
 \end{array}
\] 

\caption{
Non-logical axioms of the first-order theories   $ \mathsf{AS} $  and  $ \mathsf{VS} $.
}
\label{AxiomsOfAS}
\end{figure}

\begin{definition}
Let  $S  \in  \lbrace   \mathsf{VS} ,  \mathsf{AS} \rbrace   $. 
A first-order theory  $T$  \emph{directly interprets} $S$  if there exist  first-order formulas  
$  \alpha (u_0,   \ldots  ,  u_k  )  $  and  
$  \phi (x, y  ,  u_0,  \ldots , u_k ) $  in the language of  $T$, with all free variables displayed, 
such that  the following holds  for all models  $  \mathcal{M}  \models   T $  with universe $M$: 
\begin{enumerate}
\item    $  \mathcal{M} \models    \exists    \vec{u}    \left[  \,   \alpha (u_0,   \ldots  ,  u_k  )    \,    \right]   $

\item   for   all  $  p_0,  \ldots , p_k  \in  M  $  such that 
$  \mathcal{M}  \models  \alpha (  \vec{p}  )  $   
\[
 \left(  M,  R_{  \vec{p} }    \right)  \models  S 
 \     \   \mbox{  where  }    \     \  
  R_{  \vec{p} }   = 
   \big\{    ( x, y  )  \in  M^2  :   \     \mathcal{M} \models    \phi ( x,  y  ,  \vec{p} )    \,     \big\}      
 \         .
 \]
\end{enumerate}
\end{definition}

Examples of    sequential theories are   adjunctive set theory $ \mathsf{AS} $ (see  Mycielski  et al.  \cite[Appendix III]{AMSVolume426Year1990}),  
the theory of discretely ordered commutative semirings with a least element $ \mathsf{PA^-} $  (see Jeřábek  \cite{Jerabek2012}), 
the theory   $ \mathsf{I \Delta_0 } $ (see Hájek \&  Pudlák  \cite{HajekandPudlak2017}  Section V3b), 
Peano arithmetic $ \mathsf{PA} $, 
Zermelo-Fraenkel set theory $ \mathsf{ZF} $.
Sequential theories are Vaught theories  since $   \mathsf{AS} $ contains $  \mathsf{VS} $,  but the converse does not hold. 
For example,   $  \mathsf{VS} $   is not sequential: 
sequential theories  have finitely axiomatizable  essentially  undecidable subtheories since  they  interpret   $  \mathsf{AS} $, 
which is  essentially undecidable, 
 but   finite subtheories  of  $ \mathsf{VS} $ are  not essentially undecidable; 
in fact, Jeřábek  \cite{Jerabek2022} shows that the minimal models of finite fragments of $ \mathsf{VS} $, 
that  is,  the structures $  \left(   H_{  \kappa  }     ,    \in  \right)   $  of sets   hereditarily of cardinality $  <  \kappa  $,    for $  \kappa $ a non-zero finite cardinal, 
have decidable first-order theories with  transparent computable axiomatization. 
Kurahashi \& Visser   \cite{KurahashiVisser}  have      shown that there   exist  finitely axiomatizable   Vaught theories  that are   not sequential; 
furthermore,  there exist such theories  that are  same-language extensions of    $  \mathsf{VS} $.

Vaught  theories are  essentially undecidable  since they interpret the very weak theory   $  \mathsf{R} $  of  
Robinson, Tarski \& Mostowski   \cite{tarski1953}; 
  $  \mathsf{R} $   is not a Vaught theory since it is locally finitely satisfiable while Vaught theories are not. 
Other examples of non-Vaught theories are 
Robinson's  $ \mathsf{Q} $  (see    Visser \cite{Visser2017}) 
and   Gregorczyk's    theory of concatenation $ \mathsf{TC} $ (see   Visser   \cite{Visser2009NDJFL}  Section 5). 
Visser shows  that $ \mathsf{Q} $  and  $ \mathsf{TC} $ are not sequential by showing that these theories are not even poly-pair theories, 
that is, there is no many-dimensional direct interpretation in these theories 
of the fragment of  $  \mathsf{VS} $  given by  $  \mathsf{VS}_0  $  and  $  \mathsf{VS}_2  $. 
These two examples show that Vaughtness and sequentiality  are  not preserved by interpretations since 
$  \mathsf{Q} ,  \mathsf{TC},  \mathsf{AS},  \mathsf{PA}^- $  and  $ \mathsf{I \Delta_0 } $ are  mutually interpretable 
(see  Ferreira \& Ferreira \cite{Ferreira2013} for a survey of  theories  interpretable in $  \mathsf{Q} $).
These properties are,  though,  preserved by   direct interpretations
(see the definition above). 
%; 
%this is precisely the notion of interpretability developed in Section  I.4   of Tarski et al  \cite{tarski1953}:  
%a  first-order theory $S$ is directly interpretable in a first-order theory $T$ if some extension by definitions of  $T$ contains  $S$.

The main result of this paper is that there exists a non-sequential arithmetical theory with pairing, in contrast to $  \mathsf{Q} $  and  $  \mathsf{TC} $ which   are  not    even poly-pair theories;  
an arithmetical theory is a theory in the language of arithmetic that extends   Robinson's   $\mathsf{Q} $
(see Figure  \ref{AxiomsOfQ}  for the axioms of  $  \mathsf{Q} $). 
Let  
\[
\Theta :=   \forall xy z w  \left[  \,   (x+y)^2 + x = (z+w)^2 + z \rightarrow x = z  \;  \wedge  \;  y = w  \,  \right] 
\]
This sentence  says that the map  $  \pi  (x, y ) = (x+y)^2 + x $ is one-to-one. 
In this paper,  we show that  the theory  $  \mathsf{Q} + \Theta $  is not a Vaught theory and hence not a sequential theory. 
The theory  $  \mathsf{Q} + \Theta $  is a fragment of  $  \mathsf{PA}^- $,   and the map $ \pi $ is one of the ingredients used by 
Emil   Jeřábek \cite{Jerabek2012} to show that    $ \mathsf{PA^-} $  is  a  sequential theory. 
Cantor's pairing function cannot be used since   $  \mathsf{PA}^- $  does not prove
 $ \forall x y  \,  \exists z   \left[  \,   2z = (x+y) (x+y+1) \,   \right]  $; 
for example, in the model $  \mathbb{Z}  \left[ X \right]^+ $, the non-negative part of the polynomial ring   $ \mathbb{Z}  \left[ X \right] $, 
monic polynomials of degree $  \geq 1  $ are neither even nor odd.

\begin{figure}

\[
\begin{array}{r l  c  c r l  }
& {\large \textsf{The Axioms of } \mathsf{Q} }
\\
\\
 \mathsf{Q_1}  
& \forall x   y \;  [ \  x \neq y \rightarrow \mathrm{S}  x \neq \mathrm{S}  y    \ ]   
\\
\mathsf{Q_2} 
& \forall x \;   [ \   \mathrm{S}  x \neq 0 \  ] 
\\
 \mathsf{Q_3}  
& \forall x  \;   [ \  x=0 \vee \exists y \;  [ \ x = \mathrm{S}  y \  ]     \ ]   
\\
 \mathsf{Q_4}  
&  \forall x  \;   [ \  x+0 = x  \ ]   
\\
 \mathsf{Q_5}  
& \forall x  y \;   [ \  x+ \mathrm{S}  y = \mathrm{S}  ( x+y)   \ ] 
\\
 \mathsf{Q_6} 
&  \forall x  \;   [ \  x \times 0 = 0  \ ]
\\
 \mathsf{Q_7} 
&  \forall x  y  \;  [ \  x \times  \mathrm{S}  y =   x \times y   + x   \ ]  
\end{array}
\]

\caption{
Non-logical axioms of Robinson`s $ \mathsf{Q} $.
}
\label{AxiomsOfQ}
\end{figure}

Our main tool in  proving that    $  \mathsf{Q} + \Theta  $   is  not a  Vaught theory  is the characterization of elementary equivalence in terms of   Ehrenfeucht-Fra\"iss\'e games.  
Let   $  \mathcal{M} $ and  $  \mathcal{N} $ be first-order  structures in a finite  language  $L$. 
For $ 0 < n  <  \omega $,   the   Ehrenfeucht-Fra\"iss\'e game  $  \mathsf{EF}_n  \left[    \mathcal{M}  ,  \mathcal{N}   \right] $  
is a game with $n$ rounds between two  players,   $  \forall $  and  $  \exists $. 
On the $i$-th round, player $  \forall $   picks one of the two structures $  \mathcal{M} $, $  \mathcal{N} $ and   chooses an element from this structure, 
then  player $  \exists $ chooses an element from the other structure. 
The play is the   sequence $  (a_1, b_1 ) ,   (a_2, b_2 ) ,  \ldots ,   (a_n , b_n ) $   which consists of the pairs chosen by the players at each round, 
the $ a_i$'s are elements of  $ \mathcal{M}  $ and the $ b_i$'s are elements of $ \mathcal{N}  $.
Player $  \exists $ wins the game if and only if there there exists  a partial embedding  $  F:    \mathcal{M}  \to   \mathcal{N}  $  
such that  $  F(a_i)  =  b_i $  for all $  i  \in  \lbrace  0,1  ,  \ldots ,  n  \rbrace  $.

Recall that $  \mathcal{M} $ and $   \mathcal{N} $ are elementary equivalent  if and only if  for each  $ 0 < n  <  \omega $, 
player $  \exists $  has a winning strategy for the game
$  \mathsf{EF}_n  \left[    \mathcal{M}  ,  \mathcal{N}   \right] $.
Furthermore,  player  $  \exists  $  has a winning strategy for the game   $  \mathsf{EF}_n  \left[    \mathcal{M}  ,  \mathcal{N}   \right] $
if and only if   $  \mathcal{M}  \equiv_n   \mathcal{N} $, that is, 
 for each first-order   $L$-sentence   $  \phi $, 
if each atomic formula that occurs in    $  \phi $ is unnested and the quantifier depth of $  \phi $  is  at most  $    n $, 
then $  \mathcal{M} \models  \phi $  if and only if    $  \mathcal{N} \models  \phi $. 
An atomic formula is unnested it is of the form: 
(i)  $  x = c   $  where $x$  is a variable  symbol and $c$ is a constant symbol; 
(ii) $  f (  \vec{x} )  =  y  $  where  $  \vec{x} , y $  are variable symbols  and $f$ is a function symbol; 
(iii) $  R (  \vec{x} )  $  where  $  \vec{x} $  are variable symbols and  $R$ is a relation symbol. 
 The quantifier depth of  $  \phi $ is defined by recursion as follows: 
 \[
 \mathsf{qd} \left(  \phi  \right)   =   \begin{cases}
 0              &   \mbox{  if  }    \phi   \mbox{ is an atomic formula; }
 \\
  \mathsf{qd} \left(  \psi  \right)       &   \mbox{  if  }    \phi   \mbox{ is  }  \neg  \psi \,  ;  
   \\
\max \lbrace  \mathsf{qd} \left(  \psi_0   \right)   ,    \mathsf{qd} \left(  \psi_1   \right)  \rbrace      &   \mbox{  if  }    \phi   \mbox{ is  }   \psi_0  \;  \square  \;  \psi_1 \,    \   \mbox{  and  }  \square  \in  \lbrace \vee,  \wedge ,  \rightarrow,  \leftrightarrow  \rbrace   ;  
   \\
1 + \mathsf{qd} \left(  \psi   \right)        &   \mbox{  if  }    \phi   \mbox{ is  }     Q x  \left[  \,   \psi  \,  \right]     
\   \mbox{  and  }     Q  \in  \lbrace  \forall ,  \exists   \rbrace
 \,          .
 \end{cases}
 \]
See, for example,  Section 2.4 of  Marker  \cite{Marker2002} for more details on    Ehrenfeucht-Fra\"iss\'e games.

\section{Main Theorem}

Recall that  $  \mathsf{Q} +  \Theta $  is the theory we obtain by extending Robinson's  $  \mathsf{Q} $  with the axiom 
\[
\Theta :=   \forall xy z w  \left[  \,   (x+y)^2 + x = (z+w)^2 + z \rightarrow x = z  \;  \wedge  \;  y = w  \,  \right] 
\]
which  says that the map  $  \pi  (x, y ) = (x+y)^2 + x $ is one-to-one. 
We prove the following theorem.

\begin{theorem}  \label{QPLUSCANTORPAIRING}
$  \mathsf{Q} +  \Theta  $ has a  model $  \mathcal{M} $  
such that for each $ n \in  \omega $,   there exists   a    finite set  $  A_n  $  of elements of  $  \mathcal{M} $
   such that for each  element  $w $  of   $  \mathcal{M} $  there exist  $  a  \in  A_n  $  and  $ b  \not\in A_n  $   such that 
\[
  \left( \mathcal{M}   \, , \,  w  ,    a    \right)  \equiv_n    \left( \mathcal{M}   \, , \,  w  ,    b     \right)    
  \     .
  \]
\end{theorem}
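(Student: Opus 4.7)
The plan is to construct $\mathcal{M}$ as a countable model of $\mathsf{Q}+\Theta$ whose nonstandard part carries a rich family of automorphisms, then to exploit this symmetry to define $A_n$ and verify the EF equivalences. A natural candidate is a polynomial-like semiring extension of $\mathbb{N}$ in countably many indeterminates, for instance a suitable subsemiring of $\mathbb{Z}[X_1,X_2,\ldots]$ analogous to the model $\mathbb{Z}[X]^+$ mentioned in the introduction. In such a structure, $\pi(x,y)=(x+y)^2+x$ is injective by a degree-and-leading-coefficient analysis, and permutations of the indeterminates supply many nontrivial automorphisms of $\mathcal{M}$.

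For each $n$, I would classify the elements of $\mathcal{M}$ by their $n$-EF-type relative to the standard part; since the quantifier depth is bounded by $n$ and the language is finite, only finitely many such types occur. The set $A_n$ is then defined as a fixed finite set containing a representative of each of these types, together with the initial segment of $\mathbb{N}$ up to some threshold depending on $n$. Given any $w\in\mathcal{M}$, the $n$-type of the pair $(w,a)$ takes only finitely many values as $a$ varies over $\mathcal{M}$, so the pigeonhole principle forces some $a\in A_n$ and some $b\notin A_n$ to share the same $n$-type relative to $w$, which is precisely the EF equivalence $(\mathcal{M},w,a)\equiv_n(\mathcal{M},w,b)$.

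The back-and-forth strategy for player $\exists$ in $\mathsf{EF}_n\left[(\mathcal{M},w,a),(\mathcal{M},w,b)\right]$ proceeds by maintaining a partial isomorphism over the pebbles placed so far. When $\forall$ picks an element close to existing pebbles, in the sense of being nameable from them via short terms, $\exists$ copies the move through the matching isomorphism; when $\forall$ picks a generic element, $\exists$ selects an analogous fresh element using an indeterminate-permuting automorphism, which preserves addition, multiplication, and hence the pairing $\pi$.

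The main obstacle is the tension between axiom $\Theta$ and the homogeneity required for the EF equivalence: the injectivity of $\pi$ rigidifies the arithmetic of $\mathcal{M}$ and restricts its available automorphisms, yet the conclusion demands that many elements share the same EF-type modulo any parameter $w$. A further subtlety is that $A_n$ must serve \emph{uniformly} for every $w\in\mathcal{M}$, which forces the type classification to be robust under shifts by arbitrary elements. Reconciling these competing requirements, and carefully verifying the EF strategy under the constraint of $\Theta$, is the technical heart of the argument; the details of this verification — in particular, bounding how far $\pi$ and the arithmetic operations can propagate local information within $n$ rounds — will drive the explicit choice of $A_n$.
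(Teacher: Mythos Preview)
Your proposed model cannot work. A polynomial semiring such as $\mathbb{Z}[X_1,X_2,\ldots]^+$ is a model of $\mathsf{PA}^-$, and Je\v{r}\'abek showed $\mathsf{PA}^-$ is sequential. Run the argument of Corollary~\ref{QPLUSCANTORPAIRINGCorollary} inside that model: there is a fixed formula $x\in^\star y$ of some quantifier depth $N$ and, for every finite $A$, an element $w_A$ with $\mathcal{M}\models a\in^\star w_A$ iff $a\in A$. Then for any $n\ge N$ and any candidate $A_n$, the choice $w=w_{A_n}$ gives $(\mathcal{M},w,a)\not\equiv_n(\mathcal{M},w,b)$ for every $a\in A_n$ and every $b\notin A_n$, so no $A_n$ with the required property exists. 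The very homogeneity you want (many automorphisms of a nice commutative semiring) is exactly what makes the model sequential and kills the theorem.

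The paper goes in the opposite direction: its model is deliberately wild. Addition on nonstandard elements is a free acyclic pairing $\langle\cdot,\cdot\rangle$ (non-commutative, non-associative), and multiplication is engineered so that $w\times^{\mathcal{M}} v$ depends on a nonstandard $v$ only through a coarse $\mathbb{Z}$-valued invariant $\chi(v)$. Theorem~\ref{SectionQTreeModelTheorem} shows the tree structure of addition alone forces $\Theta$; the collapsed multiplication is what allows the EF strategy (Lemma~\ref{SectionQFirstExtensionLemma}) to go through without the model secretly encoding sets.

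Your pigeonhole sketch has a separate gap: the finitely many $n$-types of pairs are fixed, but which element realizes which type relative to $w$ varies with $w$, so ``a representative of each type'' is not a well-defined finite set independent of $w$. The paper sidesteps this by taking $A_n$ to be a block of distinguished atoms $\mathsf{d}^0_k$; for any $w$, only boundedly many atoms occur in the depth-$\rho_n(n)$ decomposition of $w$, so some atom inside $A_n$ and some atom outside both lie outside this support and are then shown, via an explicit extension lemma for partial $L_{\mathbb{T}}$-embeddings, to be $n$-indistinguishable over $w$.
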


\begin{corollary}    \label{QPLUSCANTORPAIRINGCorollary}
$  \mathsf{Q} +  \Theta  $  is not a Vaught theory.
\end{corollary}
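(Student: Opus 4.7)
The plan is to derive a contradiction from assuming $\mathsf{Q} + \Theta$ is a Vaught theory, using the model $\mathcal{M}$ produced by Theorem \ref{QPLUSCANTORPAIRING}. If $\mathsf{Q} + \Theta$ were a Vaught theory, then in particular $\mathcal{M}$ would directly interpret $\mathsf{VS}$ via some formulas $\alpha(u_0, \ldots, u_k)$ and $\phi(x, y, u_0, \ldots, u_k)$, and there would be parameters $\vec{p}$ in $\mathcal{M}$ with $\mathcal{M} \models \alpha(\vec{p})$ and $(M, R_{\vec{p}}) \models \mathsf{VS}$, where $R_{\vec{p}}$ is the relation defined by $\phi(\cdot,\cdot,\vec{p})$. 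The overall strategy is to produce a formula with a single parameter that defines the finite set $A_n$ from the theorem, and then turn the indistinguishability provided by the theorem against this definition.

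The first step is to eliminate the multiple parameters in $\phi$ using the pairing function $\pi(x,y) = (x+y)^2 + x$, which is injective by $\Theta$. I would define a formula $\Phi(x, u)$ in the language of arithmetic asserting: there exist $y, u_0, \ldots, u_k$ such that $u$ is the iterated $\pi$-code of the tuple $(y, u_0, \ldots, u_k)$ and $\phi(x, y, u_0, \ldots, u_k)$ holds. Injectivity of $\pi$ ensures that the decoding is unique whenever it exists. Consequently, for any $y^* \in M$, setting $w^* := \pi(y^*, \pi(p_0, \pi(p_1, \ldots, \pi(p_{k-1}, p_k)\ldots)))$ gives $\mathcal{M} \models \Phi(x, w^*)$ iff $\mathcal{M} \models \phi(x, y^*, \vec{p})$ for every $x \in M$.

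Next, I would let $n$ be the quantifier depth of an unnested version of $\Phi(x, u)$, plus a small constant to absorb the unnesting required when constants are substituted for the free variables. Applying Theorem \ref{QPLUSCANTORPAIRING} with this $n$ yields the finite set $A_n$. Since $(M, R_{\vec{p}}) \models \mathsf{VS}_{|A_n|}$, there is an element $y^* \in M$ whose $R_{\vec{p}}$-extension is exactly $A_n$; that is, $\mathcal{M} \models \forall x [\phi(x, y^*, \vec{p}) \leftrightarrow x \in A_n]$. Forming $w^*$ as above with this choice of $y^*$ produces a single parameter such that $\mathcal{M} \models \Phi(x, w^*)$ if and only if $x \in A_n$.

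Finally, the theorem applied to $w^*$ yields $a \in A_n$ and $b \notin A_n$ with $(\mathcal{M}, w^*, a) \equiv_n (\mathcal{M}, w^*, b)$. The sentence obtained from $\Phi$ by interpreting constants $\bar{c}, \bar{w}$ as the two distinguished elements is an unnested sentence of quantifier depth at most $n$; it holds when $\bar{c}$ names $a$ and $\bar{w}$ names $w^*$, but fails when $\bar{c}$ names $b$ and $\bar{w}$ names $w^*$, contradicting the $\equiv_n$ equivalence. The main obstacle is technical bookkeeping around the unnesting of $\Phi$ and of the constant substitutions, but the overhead depends only on the fixed formula $\phi$ arising from the hypothetical direct interpretation, and the theorem allows us to choose $n$ arbitrarily large.
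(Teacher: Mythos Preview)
Your proposal is correct and follows essentially the same approach as the paper's proof: both use the iterated pairing $\pi$ (guaranteed injective by $\Theta$) to compress the set-code $y^*$ together with the parameters $\vec{p}$ into a single element $w^*$, choose $n$ to be the quantifier depth of the resulting two-variable membership formula, pick $w^*$ to code exactly $A_n$, and then derive a contradiction from $(\mathcal{M},w^*,a)\equiv_n(\mathcal{M},w^*,b)$ with $a\in A_n$, $b\notin A_n$. Your remark about adding a small constant to absorb the unnesting of the substituted constants is a point the paper glosses over, so if anything you are slightly more careful.
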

\begin{proof}
Assume for the sake of a contradiction $ \mathsf{Q} +  \Theta  $ directly interpret  $  \mathsf{VS} $.
Let  $  \mathcal{M} $  be the model of  $ \mathsf{Q} +  \Theta  $  given by Theorem  \ref{QPLUSCANTORPAIRING}. 
Since  $ \mathsf{Q} +  \Theta  $ directly interpret  $  \mathsf{VS} $, there exists a formula 
$  \phi (x, y  ,  u_0,  \ldots , u_k ) $  in the language of  $ \mathsf{Q} $, with all free variables displayed, 
 and   elements  $  p_0,  \ldots , p_k $  of   $  \mathcal{M} $  such that   for all  $  n  \in  \omega  $
\[
  \mathcal{M}   \models    
\forall  x_0,  x_1,  \ldots , x_n   \,   \exists   y   \,  \forall  z  \left[   \, 
  \phi ( z , y  ,  p_0,  \ldots , p_k ) \leftrightarrow   \bigvee_{ i = 0 }^{ n }  z = x_i   \,   \right] 
\          . 
\]
In particular, for each $ n  \in  \omega $, we have an $(n+1)$-ary function $  F_n $  on the universe of  $  \mathcal{M} $   such that 
\[
  \mathcal{M}   \models    
\forall  x_0,  x_1,  \ldots , x_n     \,  \forall  z  \left[   \, 
  \phi ( z,   F_n  \left(  \vec{x} \right)    ,  p_0,  \ldots , p_k ) \leftrightarrow   \bigvee_{ i = 0 }^{ n }  z = x_i   \,   \right] 
\          . 
\]

Let   $  \pi (x, y )   $ denote the term  $  (x+y)^2  + x  $.
By recursion, let    $   \pi_0 (x_0,  x_1  )   :=  \pi (x_0 , x_1 )   $   and  
 \[
    \pi_{ n+1}  (x_0,  x_1,  \ldots , x_{ n+2}   )   :=   \pi \left(   \pi_{ n}  (x_0,  x_1,  \ldots , x_{ n +1}    )   ,   x_{ n+2}   \right)    
  \       .
\]    
By $  \Theta $, these terms define one-to-one maps. 
Let    $  z  \in^{  \star }   y   $  be shorthand for 
\[
\exists  y_0 ,  u_0, u_1,  \ldots  ,  u_k  \left[    \    y =   \pi_{ k }   \left(  y_0 , u_0, u_1,  \ldots , u_k  \right)       \;  \wedge   \;  
 \phi ( z , y_0  ,  u_0,  \ldots , u_k )    \    \right]  
 \      .
\]
For each  $  n  \in \omega  $,  we define an $  (n+1) $-ary function  $G_n $  as follows: 
\[
G_n  \left(  \vec{x} \right)  =   \pi_k   \left(   F_n  \left(   \vec{x} \right)   ,   p_0, p_1 ,   \ldots , p_k   \right)
\      .
\]
Then 
\[
  \mathcal{M}   \models    
\forall  x_0  \ldots  x_n     \;   \forall z      
\big[        \               z   \in^{  \star }    G_n  \left(  \vec{x} \right)      \leftrightarrow  \bigvee_{ i = 0 }^{ n }   z  = x_i               \            \big] 
\          .
\]
Thus,    for  all $  n  \in  \omega   $
\[
  \mathcal{M}   \models    
\forall  x_0  \ldots  x_n    \;  \exists  y   \;   \forall z      
\big[        \               z   \in^{  \star }   y       \leftrightarrow  \bigvee_{ i = 0 }^{ n }   z  = x_i               \            \big] 
\          .
\tag{*}
\]

Let  $N$ be the quantifier depth of the formula   $  x  \in^{  \star } y $ when written in a logically equivalent form  where  each atomic formula is unnested.
By Theorem \ref{QPLUSCANTORPAIRING}, 
there exists  a finite set $ A $ of elements of   $  \mathcal{M} $  such that for 
 each    element  $w $  of   $  \mathcal{M} $  there exist 
 $  a  \in  A  $  and  $  b \not\in A  $ such that 
$  \left( \mathcal{M}   \, , \,  w  ,    a    \right)  \equiv_N    \left( \mathcal{M}   \, , \,  w  ,    b   \right)    $. 
By (*),   $   \mathcal{M}  $ has an element  $   w $  such that 
\[
\mathcal{M}     \models     \forall  x   \left[   \   
x   \in^{  \star }   w   \leftrightarrow  \bigvee_{  g \in  A }  x =  g
 \      \right]      
\         .
\tag{**}
\]
We also know that there exist  $   a   \in  A  $  and   $    b    \not\in  A  $   such that 
\[
  \left( \mathcal{M}   \, , \,   w   ,  a     \right)  \equiv_N    \left( \mathcal{M}   \, , \, w   ,   b    \right)    
  \       .
  \]
Hence, 
$   \mathcal{M}    \models   a   \in^{  \star }   w   \leftrightarrow    b \in^{  \star }   w    $,  by the definition of  $  \equiv_N    $,  
which contradicts (**)  since   $  b  \not\in  A  $. 
Thus,  $  \mathsf{VS} $ is not directly interpretable in  $\mathsf{Q} +  \Theta   $. 
\end{proof}

It follows from  $  \mathsf{Q} + \Theta $  not being a Vaught theory that the tree theory  $  \mathsf{T} $  of    Kristiansen \& Murwanashyaka   \cite{cie20}  is also not a Vaught theory 
since it can be checked that  there exists an extension by definitions of  $  \mathsf{Q} + \Theta $   that contains    $  \mathsf{T} $
(see the following corollary). 
The language of  $  \mathsf{T} $  
 consists of a constant symbol $ \perp$, a binary function symbol  $ \langle \cdot , \cdot \rangle $
and a binary relation symbol $ \sqsubseteq $. 
The theory  $  \mathsf{T}  $  is given by the following four non-logical axioms 
\[
\begin{array}{r l  c  c r l  }
\mathsf{ T_{1} } 
& \forall x y \;  [     \  \langle x, y \rangle \neq \perp    \         ]   
\\
\mathsf{ T_{2} } 
& \forall x y z w \;    [  \  \langle x, y \rangle  =   \langle z, w \rangle \rightarrow 
(  \  x=z \wedge y=w  \   )    \     ] 
\\
\mathsf{ T_{3} } 
&  \forall x  \; [    \     x \sqsubseteq \perp       \;    \leftrightarrow    \;      x= \perp  \      ]   
\\
\mathsf{ T_{4} } 
&   \forall x y z  \; [      \       x \sqsubseteq   \langle y , z  \rangle         \leftrightarrow     
(  \  x =  \langle y , z  \rangle  \vee x \sqsubseteq y \vee x \sqsubseteq z  \  )       \       ] 
\end{array}
\] 
The minimal  model of  $  \mathsf{T}  $   is the  term algebra of finite full binary trees  extended with the subtree relation.
Kristiansen \& Murwanashyaka  \cite{cie20}  show that $  \mathsf{T} $ interprets  Robinson's   $  \mathsf{Q} $, 
and   Damnjanovic  \cite{Damnjanovic2021}   has shown  that   $  \mathsf{T}  $  is interpretable in  $  \mathsf{Q} $.

\begin{corollary}
$  \mathsf{T} $  is not a Vaught theory. 
\end{corollary}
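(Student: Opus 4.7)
The plan is to reduce the statement to Corollary \ref{QPLUSCANTORPAIRINGCorollary} by exhibiting $\mathsf{T}$ as a subtheory of some extension by definitions of $\mathsf{Q} + \Theta$, and then observing that any direct interpretation of $\mathsf{VS}$ in $\mathsf{T}$ would pull back to a direct interpretation of $\mathsf{VS}$ in $\mathsf{Q} + \Theta$ by unwinding definitions.

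I first describe the desired extension by definitions $\mathsf{Q}^\star$ of $\mathsf{Q} + \Theta$. The constant symbol $\perp$ is defined by $\perp := 0$; the function symbol $\langle \cdot , \cdot \rangle$ is defined by $\langle x, y \rangle := \mathrm{S} ( \pi (x, y) )$, where $\pi (x, y) := (x + y)^2 + x$; and the binary relation $\sqsubseteq$ is defined by a formula $\sigma (x, y)$ in the language of $\mathsf{Q}$ asserting the existence of a finite sequence $x = z_0, z_1, \ldots , z_n = y$, coded using iterated application of $\pi$, such that for every $i < n$ there exist $u_i, v_i$ with $z_i = \mathrm{S}(\pi(u_i, v_i))$ and $z_{i+1} \in \lbrace u_i, v_i \rbrace$. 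Sequence coding via $\pi$ is available since $\Theta$ ensures that $\pi$ is injective. I then verify that $\mathsf{T}_1$--$\mathsf{T}_4$ are provable in $\mathsf{Q}^\star$: $\mathsf{T}_1$ is immediate from $\mathsf{Q}_2$; $\mathsf{T}_2$ follows from $\mathsf{Q}_1$ combined with $\Theta$; $\mathsf{T}_3$ holds because $\mathsf{Q}_2$ prevents $0$ from having any non-trivial $\pi$-decomposition, so the only chain starting at $0$ is the singleton $(0)$; and $\mathsf{T}_4$ holds by unfolding $\sigma(x, \mathrm{S}(\pi(y, z)))$ one step, using $\mathsf{Q}_1$ and $\Theta$ to identify the unique components $y$ and $z$.

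Suppose, for contradiction, that $\mathsf{T}$ directly interprets $\mathsf{VS}$ via formulas $\alpha^\dagger$ and $\phi^\dagger$ in the language of $\mathsf{T}$. Since $\mathsf{T} \subseteq \mathsf{Q}^\star$, the same formulas give a direct interpretation of $\mathsf{VS}$ in $\mathsf{Q}^\star$. Replacing the defined symbols $\perp$, $\langle \cdot , \cdot \rangle$, $\sqsubseteq$ throughout $\alpha^\dagger, \phi^\dagger$ by their defining formulas in the language of $\mathsf{Q}$ yields $\mathsf{Q}$-formulas $\alpha, \phi$ that witness a direct interpretation of $\mathsf{VS}$ in $\mathsf{Q} + \Theta$, contradicting Corollary \ref{QPLUSCANTORPAIRINGCorollary}.

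The main obstacle is producing the explicit definition of $\sqsubseteq$ and checking that $\mathsf{T}_3$ and $\mathsf{T}_4$ go through in $\mathsf{Q} + \Theta$ uniformly, including at non-standard elements. Here the point is that $\mathsf{T}_3$ and $\mathsf{T}_4$ constrain $\sqsubseteq$ only on right-hand arguments of the form $0$ or $\mathrm{S}(\pi(y, z))$, leaving us free to define $\sqsubseteq$ arbitrarily on other elements; the technical work consists in checking that the chain-based definition together with the injectivity of $\pi$ proves the desired equivalences without appealing to any induction scheme not already available in $\mathsf{Q} + \Theta$.
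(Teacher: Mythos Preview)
Your overall strategy and the translations of $\perp$ and $\langle\cdot,\cdot\rangle$ match the paper exactly; the gap is in the translation of $\sqsubseteq$. You assert that ``sequence coding via $\pi$ is available since $\Theta$ ensures that $\pi$ is injective,'' but injectivity of $\pi$ gives only a pairing function, not the ability to express ``there exist $n$ and a sequence $(z_0,\dots,z_n)$ such that for all $i<n$ a local condition on $z_i,z_{i+1}$ holds'' as a single first-order formula. Writing that down requires uniformly accessing the $i$-th entry of a code for variable $i$, and verifying the two directions of $\mathsf{T}_4$ (shortening and extending a witnessing sequence) requires exactly the kind of bounded-iteration argument that $\mathsf{Q}+\Theta$, having essentially no induction, does not supply. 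In fact a workable global sequence-membership predicate of the sort your $\sigma$ presupposes would yield a direct interpretation of $\mathsf{VS}$ in $\mathsf{Q}+\Theta$, contradicting the very Corollary~\ref{QPLUSCANTORPAIRINGCorollary} you are invoking. So the ``technical work'' you defer is not routine verification but the whole difficulty; as written, there is no candidate formula $\sigma$ on the table.

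The paper supplies the missing ingredient: by the method of shortening of cuts one first passes to a definable cut $J$ on which $\mathsf{PA}^-$ holds; since $\mathsf{PA}^-$ is sequential (Je\v{r}\'abek), genuine sequence coding is available inside $J$, and $u\sqsubseteq v$ is then defined only for $u,v\in J$ using those internal sequences. Restricting to a well-behaved cut is what makes the verification of $\mathsf{T}_3$ and $\mathsf{T}_4$ go through. (A minor separate slip: with $z_0=x$, $z_n=y$, and $z_{i+1}$ a child of $z_i$, your $\sigma(x,y)$ actually expresses $y\sqsubseteq x$; your own justification of $\mathsf{T}_3$---``the only chain starting at $0$''---already presumes the opposite orientation.)
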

 \begin{proof}
 
It  suffices to show that there exists an extension by definitions of  $  \mathsf{Q} + \Theta $  that contains  $  \mathsf{T} $; 
if  $  \mathsf{VS} $ were directly interpretable in  $  \mathsf{T} $, 
 then it would also be directly  interpretable in $    \mathsf{Q} + \Theta $, 
which would contradict   Corollary  \ref{QPLUSCANTORPAIRINGCorollary}.
 We   can use the following translation of the non-logical symbols of  $  \mathsf{T}  $: 
 \begin{enumerate}
\item   We translate  $  \perp  $  as  $0$.

\item     We  translate  $  \langle  \cdot  ,  \cdot   \rangle  $  as  $ G(x,y) =    (x+y)^2 + x +1  $.
By  $  \Theta  $  and the axioms of  $  \mathsf{Q} $,  $ G$ is one-to-one and $0$  is not in the image  of  $G$.

\item By the method of shortening of cuts, pick a definable cut  $  J   \models  \mathsf{PA}^- $
(see, for example,  sections 7-8  of   Murwanashyaka \cite{MurwanashyakaAML2024}).
Since   $  \mathsf{PA}^- $ is a sequential theory (see Jeřábek  \cite{Jerabek2012}), 
we can code  sequences in  $  J$ using some indexing domain  $ N  \subseteq  J  $. 
We translate   $  u  \sqsubseteq  v  $     as follows:  $  u, v  \in J  $  and there exists a  sequence  $w   \in J  $  such  that  
$   \left(  w  \right)_0  =  u  $  and there exists  $  \ell \in  N  $  such that   $   \left(  w   \right)_{  \ell }  =  v  $ 
 and for all $  j  <  \ell  $  there exists  $  g  \in J  $  such that   $   \left(  w   \right)_{  j +1  }  = G \left(  g,   \left(  w  \right)_j  \right)  $  or  
$   \left(  w   \right)_{  j +1  }  = G \left(  \left(  w  \right)_j    ,   g \right)  $.   \qedhere
 \end{enumerate}
 \end{proof}

The rest of the paper is devoted to  proving   Theorem   \ref{QPLUSCANTORPAIRING}.

\section{Tree models of  $  \mathsf{Q}  + \Theta$}

We proceed to prove Theorem  \ref{QPLUSCANTORPAIRING}. 
The idea  is   to construct a non-standard model    $  \mathcal{M}  \models   \mathsf{Q} + \Theta $ 
where   addition  $  +^{  \mathcal{M} }  $ is  expressive but manageable,  and   multiplication  $  \times^{  \mathcal{M} }  $  is somewhat generic  on non-standard elements. 
It will be the case that neither one of  $  +^{  \mathcal{M} }  $  and  $  \times^{  \mathcal{M} }  $   is commutative. 
The model  $  \mathcal{M} $  will be such that   the non-standard elements can be viewed as labeled  full binary trees with respect to     $+^{  \mathcal{M}  }  $. 
More precisely, consider the theory in the language $  \lbrace  \langle  \cdot ,  \cdot   \rangle   \rbrace $  given by the following  axioms: 
\begin{enumerate}
\item    $  \langle  \cdot ,  \cdot   \rangle   $  is a one-to-one  binary function.

\item  Infinitely many axioms which say that   $   \langle \cdot , \cdot \rangle   $  is acyclic: 
for each  term   $ t (  x_0,  \ldots , x_n ) $ which is not  a variable  and which is  such that   the displayed variables are all the variables that occur in    $ t ( \vec{x} ) $, 
we have the axiom 
$  \forall  \vec{x} \left[  \,  t (  x_0,  \ldots , x_n )   \neq  x_i   \,   \right]  $  for each  $  i \in  \lbrace 0, 1,  \ldots , n  \rbrace  $.
\end{enumerate}
The set of all the non-standard elements of  $   \mathcal{M}   $  will  form a model of this theory when we interpret 
the function symbol  $  \langle  \cdot ,  \cdot   \rangle  $  as   $  +^{   \mathcal{M}   }   $. 
The next lemma shows that  $  \Theta  $  holds in  any model of  $   \mathsf{Q}   $  with this special  property of    $  +^{   \mathcal{M}  } $
and the additional property that 
 $  m  +^{  \mathcal{M}  }  w  =  w   +^{  \mathcal{M}  }   m  $ for each standard element $m $  and each non-standard element $w $. 
We write  $  M  $  for universe of  $  \mathcal{M}  $, and we write  $  \mathbb{N} $ for the standard part of   $  \mathcal{M}  $.

\begin{theorem} \label{SectionQTreeModelTheorem}
Assume  $  \mathcal{M}  $ is a non-standard model of  $  \mathsf{Q} $  which is such that:
\begin{enumerate}
\item The restriction   $ +^{  \mathcal{M}  }  :     \left( M  \setminus  \mathbb{N} \right)^2  \to    M  \setminus  \mathbb{N} $
is  one-to-one. 

\item   Let   $ t ( x_0, x_1,  \ldots , x_{  \ell  }  ) $ be a term   in the language  $  \lbrace + \rbrace $  which is not a variable and  the displayed variables are all the variables that occur in     $ t ( \vec{x} ) $. 
For any tuple  $    \vec{w}  $ in $  M  \setminus  \mathbb{N} $,   we have 
$ t^{  \mathcal{M}  } ( w_0, w_1,  \ldots , w_{ \ell } )   \neq  w_i $  for each  $ i  \in  \lbrace 0, 1,  \ldots ,  \ell  \rbrace $.

\item   $  m  +^{  \mathcal{M}  }  w  =  w   +^{  \mathcal{M}  }   m  $   
for  all  $  m \in  \mathbb{N} $  and  all    $ w  \in M \setminus  \mathbb{N}   \,  $. 
\end{enumerate}
Then,  $   \mathcal{M}     \models  \Theta   $.
\end{theorem}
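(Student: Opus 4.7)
My strategy is a case analysis on the standardness of $x, y, z, w$, exploiting (1) and (2) to treat the non-standard part of $\mathcal{M}$ as essentially a free algebra under $+$: by (1) any equation $a + b = c + d$ with all four non-standard forces $a = c$ and $b = d$, and by (2) no non-trivial $\{+\}$-term applied to non-standard arguments can equal one of its variables. The preliminary step is to show that $\pi(a, b) := (a+b)^2 + a$ is non-standard whenever either $a$ or $b$ is. One checks that $a + b$ is then non-standard---by (1) if both are, and by $\mathsf{Q}_4, \mathsf{Q}_5$ together with (3) otherwise---and that if $c$ is non-standard then $c \cdot c = c \cdot c' + c$ is non-standard, where $c = \mathrm{S} c'$ with $c'$ non-standard by $\mathsf{Q}_3$ (the sum is non-standard by (1) if $c \cdot c'$ is non-standard and by (3) if not). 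The outer $(a+b)^2 + a$ splits on $a$ in the same way. Consequently, in the equation $\pi(x, y) = \pi(z, w)$ either all four inputs are standard---handled by injectivity of Cantor's pairing on $\mathbb{N}$---or at least one input on each side is non-standard.

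In the non-standard case, the plan is to rewrite $(x+y)^2 = (x+y) \cdot (x+y)' + (x+y)$ (with $(x+y)'$ the non-standard predecessor from $\mathsf{Q}_3$) using $\mathsf{Q}_7$, and similarly for $(z+w)^2$, exhibiting each square as a non-standard $+$ non-standard sum. For any standard $\bar m$, iterated $\mathsf{Q}_5$ also rewrites $u^2 + \bar m$ as $u \cdot u' + \mathrm{S}^m u$, still non-standard $+$ non-standard. This uniformizes both sides of the $\pi$-equation so that (1) applies: one application to the outer additions yields either $x = z$ (when $x$ and $z$ are both non-standard) or $x = \mathrm{S}^{2n} w$ (in the cross case $z = \bar n$), and a second application to the inner expansions yields $x + y = z + w$. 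A cancellation argument---using (1) when $y, w$ are both non-standard, iterated $\mathsf{Q}_1$ when both are standard, and (2) (via a stripped identity $x + w^{(k)} = x$ with $w^{(k)}$ non-standard) to rule out the mixed residue---then derives $y = w$ from $x + y = x + w$ once $x = z$ is known.

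The main obstacle will be the cross sub-cases in which exactly one of $x, z$ is standard, say $z = \bar n$: here (1) forces $x = \mathrm{S}^{2n} w$, which is non-standard, so the hypothesis $\pi(x, y) = \pi(z, w)$ must turn out to be impossible. Substituting $x = \mathrm{S}^{2n} w$ into $x + y = \mathrm{S}^n w$ and stripping successors via $\mathsf{Q}_1, \mathsf{Q}_5$ reduces to either $\mathrm{S}^j w = w$ for some standard $j \geq 1$ (when $y$ is standard) or $\mathrm{S}^{2n} w + y^{(n)} = w$ with $y^{(n)}$ non-standard (when $y$ is non-standard). The first is refuted by multiplying by $w$: iterated $\mathsf{Q}_7$ unfolds $w \cdot \mathrm{S}^j w$ into the left-associated $\{+\}$-term $((w \cdot w + w) + w) + \cdots + w$ with $j$ trailing copies of $w$, which must equal $w \cdot w$; but (2) applied to this term on the non-standard tuple $(w \cdot w, w)$ gives $\neq w \cdot w$, a contradiction. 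The second is refuted by applying $\mathrm{S}^{2n}$ to both sides and iterating $\mathsf{Q}_5$ to obtain $\mathrm{S}^{2n} w + \mathrm{S}^{2n} y^{(n)} = \mathrm{S}^{2n} w$, a direct instance of the $\{+\}$-identity $u + v = u$ forbidden by (2). The same two manoeuvres dispose of every other mixed sub-case, including the one in which $x$ and $z$ are both standard but with different numeral values.
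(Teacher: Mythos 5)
Your approach matches the paper's almost exactly: the same preliminary observation that $\pi(a,b)$ is non-standard when one argument is, the same $\mathsf{Q}_7$-expansion of squares, the same use of clause (1) to peel apart non-standard summands, the same auxiliary claim that $\mathrm{S}^k x \neq x$ (with the same multiply-by-a-non-standard-element proof of it), and an equivalent case split on the standardness of the four arguments of $\pi$.

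However, there is a genuine gap in the cross case ($z = \bar{n}$ standard, $x$ non-standard) when $y$ is also standard, say $y = \bar{k}$. From $x = \mathrm{S}^{2n}w$ and $x + y = z + w = \mathrm{S}^{n}w$ you get $\mathrm{S}^{2n+k}w = \mathrm{S}^{n}w$; stripping $n$ successors gives $\mathrm{S}^{n+k}w = w$. You assert this is $\mathrm{S}^{j}w = w$ with $j \geq 1$, but $j = n + k$ may equal $0$: when $n = k = 0$ (so $z = 0$, $y = 0$, and $x, w$ are non-standard), you obtain only the tautology $w = w$. In that sub-case the two facts your deduction chain retains, namely $x = \mathrm{S}^{2n}w$ and $x + y = z + w$, both collapse to $x = w$, which is perfectly consistent, so no contradiction has been derived. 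The contradiction in fact sits in the inner constraint you discard: from $u \cdot \mathrm{P}u + u = v \cdot \mathrm{P}v$, expanding the right side once more to $v \cdot \mathrm{P}^2 v + v$ and applying (1), you get not only $u = v$ but also $u \cdot \mathrm{P}u = v \cdot \mathrm{P}^2 v$; with $u = v$ this yields $v \cdot \mathrm{P}v = v \cdot \mathrm{P}^2 v$, hence $v \cdot \mathrm{P}^2 v + v = v \cdot \mathrm{P}^2 v$ by $\mathsf{Q}_7$, which clause (2) forbids. The paper's case (IIIa) confronts exactly this corner: after deducing $h_1 + g_0 = 0$ (hence $h_1 = g_0 = 0$), it returns to the original equation and extracts $w^2 = w^2 + h_0$, contradicting (2). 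You should either retain the discarded inner constraint or, like the paper, handle the $n = k = 0$ residue by a separate argument.
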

\begin{proof}

We start with an observation.

  \begin{quote} {\bf (Claim)} \;\;\;\;\;\;    
$  \mathcal{M}  \models  \forall x  \left[   \,  \mathrm{S}^k  x  \neq   x    \,   \right]    $  for all $  0 <  k <  \omega   $.
 \end{quote}

Assume for the sake of a contradiction  there exist  $  0 < k  <  \omega $  and  $ g  \in  M  $  such that 
   $     \left( \mathrm{S}^{  \mathcal{M} }  \right)^{k}   g  =  g  $. 
Then, $ g$ is  certainly non-standard. 
By the axioms of  $  \mathsf{Q} $
\[
g  \times^{   \mathcal{M}   }  g  =   g    \times^{   \mathcal{M}   }      \left( \mathrm{S}^{  \mathcal{M} }  \right)^{k}   g    =  
\left(   \ldots 
\left( 
\left(  g  \times^{   \mathcal{M}   }        g   \right)  +^{   \mathcal{M}  }   g    \right)    +^{   \mathcal{M}  }   \ldots 
  \right)    +^{   \mathcal{M}  }    g 
\]
where we have   $  k$  occurrences of  $   +^{   \mathcal{M}  }  $  on the right-hand side. 
But this contradicts clause (2) since  $g$  and  $  g  \times^{   \mathcal{M}   }  g  $  are both  non-standard. 
This proves the claim.

We proceed to show that   $   \mathcal{M}     \models  \Theta   $.
Let     $   \pi   :  M^2  \to  M  $  be defined  as follows
\[
  \pi  (x,y) =  \left(    \,   (x +^{  \mathcal{M} }   y)    \times^{  \mathcal{M}  }  ( x +^{  \mathcal{M}  } y)    \,  \right)   +^{  \mathcal{M}  } x  
  \      .
  \]
  We need to show that this map is one-to-one. 
Assume $   \pi ( g_0, g_1) =  \pi ( h_0, h_1) $.  
We need to show that $ g_0 = h_0 $  and $ g_1 = h_1  $.
We already know that  the restriction  $   \pi   :  \mathbb{N} \times   \mathbb{N}  \to   \mathbb{N} $  is one-to-one. 
We may  thus assume at least  one of  $  g_0, g_1 $ is non-standard; 
and hence that  at least  one of $h_0, h_1 $  is also non-standard   since  $  \mathcal{M}   \models  \mathsf{Q} $.
Let  
\[
 w :=      g_0 +^{  \mathcal{M} }   g_1
 \,  ,   \   \   
  w^2 := w  \times^{  \mathcal{M}  }   w 
   \,  ,   \   \   
  v :=      h_0 +^{  \mathcal{M} }   h_1 
   \,  ,   \   \   
 v^2 := v  \times^{  \mathcal{M}  }   v  
  \      .
\]
Clearly,  $ w, w^2 , v  , v^2 \not\in  \mathbb{N} $. 
Hence, by the axioms of  $  \mathsf{Q} $
\[
w^2  =  \left( w_0   +^{  \mathcal{M} }  w   \right)     +^{  \mathcal{M} }   w 
\     \mbox{ where  }    \   
w_0 :=  w  \times^{  \mathcal{M}  }   \left(  \mathrm{P}^{   \mathcal{M} }  \right)^2  w  
\]
and 
\[
v^2  =  \left( v_0   +^{  \mathcal{M} }  v   \right)     +^{  \mathcal{M} }   v 
\     \mbox{ where  }    \   
v_0 :=  v  \times^{  \mathcal{M}  }   \left(  \mathrm{P}^{   \mathcal{M} }  \right)^2  v  
\      .
\]
We write   $   \mathrm{P}^{   \mathcal{M} } $ for  the predecessor function. 
Without loss of generality, since at least one of $ g_0, g_1 $  is non-standard and  at   least one of $ h_0, h_1 $  is non-standard, 
we  may assume one of the following holds: 
\begin{itemize}
\item[(I)]   $  g_0, h_0  \in   \mathbb{N} $  and    $  g_1, h_1  \not\in   \mathbb{N} $; 

\item[(II)]    $  g_0, h_0  \not\in   \mathbb{N} $;

\item[(III)]    $ g_0  \in  \mathbb{N} $  and  $  g_1,   h_0  \not\in  \mathbb{N}  $. 
\end{itemize}

We consider case (I):  $  g_0, h_0  \in   \mathbb{N} $  and    $  g_1, h_1  \not\in   \mathbb{N}      \,   $. 
By clause (3)  
\[
  w   =  g_0 +^{  \mathcal{M} }   g_1 =   \left(  \mathrm{S}^{   \mathcal{M} }  \right)^{ g_0 }  g_1  
\   \mbox{  and  }    \   
v  =       h_0 +^{  \mathcal{M} }   h_1   =    \left(  \mathrm{S}^{   \mathcal{M} }  \right)^{ h_0 }  h_1  
\        .
\]
By the axioms of  $  \mathsf{Q} $ 
\begin{align*}
 \left( w_0   +^{  \mathcal{M} }  w   \right)     +^{  \mathcal{M} }     \left(    \mathrm{S}^{  \mathcal{M} }  \right)^{ g_0 }   w   
 & =  \left( \left( w_0   +^{  \mathcal{M} }  w   \right)     +^{  \mathcal{M} }   w   \right) +^{  \mathcal{M} }  g_0
 \\
 & =  \pi  ( g_0, g_1)  
\\
& =  \pi  (h_0, h_1) 
\\
&=
\left( \left( v_0   +^{  \mathcal{M} }  v   \right)     +^{  \mathcal{M} }   v   \right) +^{  \mathcal{M} } h_0  
\\
&= 
 \left( v_0   +^{  \mathcal{M} }  v   \right)     +^{  \mathcal{M} }     \left(    \mathrm{S}^{  \mathcal{M} }  \right)^{ h_0 }   v   
\      .
\end{align*}
By clause (1) 
\[
w  =  v     \    \mbox{  and  }     \     \left(    \mathrm{S}^{  \mathcal{M} }  \right)^{ g_0 }   w     =     \left(    \mathrm{S}^{  \mathcal{M} }  \right)^{ h_0 }   v   
\         .
\]
By the claim above, we must have  $  g_0  =  h_0     \,  $. 
Hence 
\[
 \left(  \mathrm{S}^{   \mathcal{M} }  \right)^{ g_0 }  g_1    =   w  =  v  =    \left(  \mathrm{S}^{   \mathcal{M} }  \right)^{ h_0 }  h_1    
 =   \left(  \mathrm{S}^{   \mathcal{M} }  \right)^{ g_0 }  h_1
 \       .  
\]
Since    $  \mathrm{S}^{   \mathcal{M} }   $  is one-to-one,  $  g_1  =  h_1   \,     $.

We consider case (II):   $  g_0, h_0  \not\in  \mathbb{N} $.
By clause (1),  from 
\[
  w^2  +^{  \mathcal{M} }   g_0 =
\pi   ( g_0, g_1)  
= 
\pi   (h_0, h_1) 
=   
    v^2  +^{  \mathcal{M} }   h_0   
  \]
we get that  $  w^2  = v^2  $  and  $  g_0 = h_0  $. 
Again, from  $  w^2  = v^2  $  we get   $  w  =  v $. 
That is 
\[
 g_0 +^{  \mathcal{M} }   g_1   =   w  =  v  =  h_0 +^{  \mathcal{M} }   h_1   =  g_0 +^{  \mathcal{M} }   h_1 
 \          .   \tag{*}
\]
If    $  g_1, h_1  \not\in   \mathbb{N} $,  then by clause (1),  (*) gives  $  g_1  =  h_1  $. 
If  $  g_1, h_1   \in   \mathbb{N} $, then     (*)    gives 
\[
\left( \mathrm{S}^{  \mathcal{M}  }  \right)^{ g_1 }   g_0  =   
 g_0 +^{  \mathcal{M} }   g_1   =    g_0 +^{  \mathcal{M} }   h_1   = 
 \left( \mathrm{S}^{  \mathcal{M}  }  \right)^{ h_1 }   g_0  
\]
which implies   $  g_1  =  h_1  $  by the claim above.
So, assume for the sake of a contradiction one of $g_1, h_1$ is standard while the other is non-standard. 
Without loss of generality, we may assume  $  g_1  \in  \mathbb{N}  $  and     $  h_1   \not\in  \mathbb{N}  $. 
By the axioms of  $  \mathsf{Q} $ and the assumption that $h_1 $ is non-standard,   from (*)   we get  
\begin{align*}
 \left(  \mathrm{S}^{   \mathcal{M} }  \right)^{ g_1 }   g_0   
 &= g_0 +^{  \mathcal{M} }   g_1 
\\
 & =     g_0   +^{  \mathcal{M} }  h_1
 \\
& =  \left(  \mathrm{S}^{   \mathcal{M} }  \right)^{ g_1 }   \left(  g_0    +^{  \mathcal{M} }      \left(  \mathrm{P}^{   \mathcal{M} }  \right)^{ g_1 }   h_1    \right)
 \     .
\end{align*}
Since  $   \mathrm{S}^{   \mathcal{M} }    $  is one-to-one,  this  implies  $ g_0 = g_0    +^{  \mathcal{M} }      \left(  \mathrm{P}^{   \mathcal{M} }  \right)^{ g_1 }   h_1    $, 
which contradicts clause (2). 
Thus, case (II) gives  $g_0 = h_0 $  and $  g_1 = h_1 $.

Finally, we  show that  case (III) leads to a contradiction. 
By assumption,  $ g_0  \in  \mathbb{N} $  and  $   g_1, h_0  \not\in  \mathbb{N}  $. 
We have two cases: (IIIa) $  h_1  \in  \mathbb{N} $; 
(IIIb)    $  h_1    \not\in  \mathbb{N} $.
We consider case (IIIa). 
By clause (3) 
\[
w  =   g_0  +^{  \mathcal{M} }   g_1  =    \left(  \mathrm{S}^{   \mathcal{M} }  \right)^{ g_0 }    g_1  
\    \mbox{  and  }    \   
v  =   h_0  +^{  \mathcal{M} }   h_1  =    \left(  \mathrm{S}^{   \mathcal{M} }  \right)^{ h_1 }    h_0  
\      .
\]
By the axioms of  $  \mathsf{Q} $
\begin{align*}
    \left( w_0   +^{  \mathcal{M} }  w   \right)     +^{  \mathcal{M} }    \left(  \mathrm{S}^{   \mathcal{M} }  \right)^{ g_0 }  w 
&=    \pi  ( g_0, g_1)  
\\
&  =   \pi   (h_0, h_1) 
\\
&=   \left( \left( v_0   +^{  \mathcal{M} }  v   \right)     +^{  \mathcal{M} }   v   \right)    +^{  \mathcal{M} }   h_0  
\      .
\end{align*}
By clause (1),   we get 
 \[
 w  =  v    
 \    \mbox{  and   }    \   
  \left(  \mathrm{S}^{   \mathcal{M} }  \right)^{ g_0 }  w    =   h_0 
  \     .
 \]
 Hence 
\begin{align*}
h_0 
&=    \left(  \mathrm{S}^{   \mathcal{M} }  \right)^{ g_0 }  w  
\\
&  =   \left(  \mathrm{S}^{   \mathcal{M} }  \right)^{ g_0 }  v  
\\
&=   \left(  \mathrm{S}^{   \mathcal{M} }  \right)^{ h_1 + g_0  }    h_0  
\end{align*}
By the claim above, we must then have $ h_1  = g_0 = 0 $. 
But then, since  $  w  = v $  we  get  
\[
w^2  =   w^2  +^{   \mathcal{M} }  g_0  =  \pi   (g_0, g_1)  =  \pi   ( h_0, h_1)  =   v^2    +^{   \mathcal{M} }  h_0 
=   w^2    +^{   \mathcal{M} }  h_0 
\]
which contradicts clause (2)  since  $ h_0$ is non-standard.

Finally, we consider case (IIIb):    $ g_0  \in  \mathbb{N} $  and  $   g_1, h_0 , h_1  \not\in  \mathbb{N}  $. 
We have  
\begin{align*}
    \left( w_0   +^{  \mathcal{M} }  w   \right)     +^{  \mathcal{M} }    \left(  \mathrm{S}^{   \mathcal{M} }  \right)^{ g_0 }  w 
&=    \pi   ( g_0, g_1)  
\\
&  =   \pi   (h_0, h_1) 
\\
&=   \left( \left( v_0   +^{  \mathcal{M} }  v   \right)     +^{  \mathcal{M} }   v   \right)    +^{  \mathcal{M} }   h_0  
\      .
\end{align*}
By clause (1),  this implies  
\[
 w  = v  
 \    \mbox{  and  }  
 \left(  \mathrm{S}^{   \mathcal{M} }  \right)^{ g_0 }  w   =  h_0 
 \     .
\]
Hence 
\begin{align*}
h_0   &= 
 \left(  \mathrm{S}^{   \mathcal{M} }  \right)^{  g_0 }   w  
 \\
  &= 
 \left(  \mathrm{S}^{   \mathcal{M} }  \right)^{  g_0 }   v
 \\
 &=    \left(  \mathrm{S}^{   \mathcal{M} }  \right)^{  g_0 }   \left(  h_0  +^{  \mathcal{M}  }   h_1   \right) 
 \\
 &= 
 h_0      +^{  \mathcal{M}  }     \left( \mathrm{S}^{   \mathcal{M} }  \right)^{  g_0 }    h_1
\end{align*}
which contradicts clause (2). 
This completes the proof. 
\end{proof}

We proceed to construct a non-standard model  $  \mathcal{M} $  of  $  \mathsf{Q} $  that satisfies clauses (1)-(3) of  
Theorem  \ref{SectionQTreeModelTheorem}, and is thus a model of  $  \mathsf{Q} +  \Theta  $,  
and has the additional property that multiplication by non-standard elements is somewhat generic. 
Let  $L$ denote the first-order language that consists of a binary function symbol  $  \langle  \cdot , \cdot   \rangle   $  
and for each  $  (n, m )  \in   \omega   \times  \mathbb{Z} $ a unique constant symbol  $  \mathsf{d}^m_n $.
Let  $  \mathcal{B} $  be an $  \aleph_0$-saturated model of the $L$-theory given by the following axioms: 
\begin{enumerate}

\item  $  \langle   \cdot  ,  \cdot   \rangle  $  is one-to-one. 

\item For each  $  (n, m )  \in   \omega   \times  \mathbb{Z} $, we have the axiom  
$  \forall x  y  \left[  \,   \langle x, y  \rangle   \neq       \mathsf{d}^m_n      \,   \right]    \,   $. 

\item  We have infinitely many axioms which say that   $   \langle \cdot , \cdot \rangle   $  is acyclic: 
for each  term   $ t (  x_0,  \ldots , x_n ) $ which is not  a variable  and which is  such that   the displayed variables are all the variables that occur in    $ t ( \vec{x} ) $, 
we have the axiom 
$  \forall  \vec{x} \left[  \,  t (  x_0,  \ldots , x_n )   \neq  x_i   \,   \right]  $  for each  $  i \in  \lbrace 0, 1,  \ldots , n  \rbrace  $.
\end{enumerate} 
We  write $  \mathbb{B} $   for the universe of     $  \mathcal{B}   $.
We write  $  \mathsf{d}^m_n   $  for     $  \left(   \mathsf{d}^m_n    \right)^{   \mathcal{B} }   $. 
The non-standard part of  $  \mathcal{M} $  will be a  ``nice" subset of    $  \mathbb{B}  $  that  will allow us  to have some control over  how multiplication on $  \mathcal{M} $  is defined.

We proceed to define the model  $  \mathcal{M} $. 
We will ensure  that  $  \mathcal{M}  \models  \mathsf{Q} $ and then invoke  Theorem  \ref{SectionQTreeModelTheorem}   to conclude  that we also have 
$     \mathcal{M}   \models  \Theta  $. 
With this in mind,  we will interpret $+$   as follows: 
 $   +^{  \mathcal{M}  }      $   and  $  \langle  \cdot  ,  \cdot   \rangle^{  \mathcal{B}  }     $  
 will agree on  $    \left(   M \setminus \mathbb{N}   \right)^2     $. 
The next lemma gives us two families  of  one-to-one maps 
\[
 \mathcal{R}_m :   \mathbb{B} \to    \mathbb{B}         \    \       \mbox{ for }    m  \in \mathbb{Z}  
\   \    \mbox{  and  }     \      \   
 \mathcal{A}_n :   \mathbb{B} \to    \mathbb{B}         \    \    \mbox{ for }     n  \in  \omega  \setminus  \lbrace 0   \rbrace  
\]
that will be used to define $  \times^{  \mathcal{M}  }  $: 
\begin{enumerate}
\item   If  $  w  $  is non-standard and  $  n  \in  \mathbb{N} \setminus 0  \rbrace  $, 
we will set  $   w    \times^{  \mathcal{M}  }   n  :=      \mathcal{A}_n   \left(  w  \right)      \,    $.

\item  If  $  w, v  $  are both non-standard,  we will set   
 $   w    \times^{  \mathcal{M}  }   n  :=      \mathcal{R}_{  \chi \left(  v  \right)  }    \left(  w  \right)       $
 where  $  \chi :  M  \setminus  \mathbb{N}   \to   \mathbb{Z} $  is a  map that will be defined  later. 
\end{enumerate}
The non-standard part of  $  \mathcal{M} $, which will also  be denoted  $  \mathbb{T} $, 
  will be the closure in   $   \mathbb{B}   $  
of  $  \big\{  \mathsf{d}^m_n  :   \   (n, m  )  \in  \omega  \times  \mathbb{Z}  \,   \big\}  $  under 
$  \langle  \cdot  ,  \cdot  \rangle^{  \mathcal{B}  }        $
and the maps    $   \mathcal{R}_m      \,   $.

\begin{lemma}
There exist  two sequences 
\[
  \left(  \mathcal{R}_m :    \   m  \in \mathbb{Z}  \,  \right)  
\   \    \mbox{  and  }     \      \   
  \left(  \mathcal{A}_n :    \   n  \in  \omega  \setminus  \lbrace 0  \rbrace  \,  \right)  
  \]
 such that the following holds: 
 \begin{enumerate}
 \item  Each  $  \mathcal{G}  \in  \lbrace  \mathcal{R}_m :    \   m  \in \mathbb{Z}  \,   \rbrace  \cup    \lbrace     \mathcal{A}_n :    \   n  \in  \omega  \setminus  \lbrace 0  \rbrace   \rbrace   $  
 is a one-to-one function  $  \mathcal{G}     :   \mathbb{B} \to  \mathbb{B}       \,     $.

 \item   Any two distinct maps in   $  \lbrace  \mathcal{R}_m :    \   m  \in \mathbb{Z}  \,   \rbrace  \cup    \lbrace     \mathcal{A}_n :    \   n  \in  \omega  \setminus  \lbrace 0 , 1  \rbrace   \rbrace   $    have disjoint images.

 \item     $  \mathcal{R}_m   \left( \cdot  \right)     =  \langle    \mathcal{R}_{ m-1}   \left( \cdot  \right)      \,  ,  \,  \cdot   \rangle^{  \mathcal{B}  }  $  for all  $  m  \in  \mathbb{Z} $.

 \item   $  \mathcal{A}_1  $  is the identity map. 
 
  \item     $  \mathcal{A}_{n}   \left( \cdot  \right)     =  \langle    \mathcal{A}_{ n-1}   \left( \cdot  \right)      \,  ,  \,  \cdot   \rangle^{  \mathcal{B}  }  $  for all  $  n  \in   \omega  \setminus  \lbrace 0 , 1  \rbrace      $. 
 \end{enumerate}
\end{lemma}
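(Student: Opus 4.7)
My plan is to first dispose of the $\mathcal{A}_n$'s, which are essentially trivial. Clauses (4) and (5) literally define the sequence by forward recursion: $\mathcal{A}_1$ is the identity, and $\mathcal{A}_n(x) = \langle \mathcal{A}_{n-1}(x), x \rangle^{\mathcal{B}}$ for $n \geq 2$. Each $\mathcal{A}_n$ is therefore the evaluation of a fixed one-variable term in $\mathcal{B}$, and is a bona fide map $\mathbb{B} \to \mathbb{B}$ with no existential content.

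The substance of the lemma lies in constructing the $\mathcal{R}_m$'s, which are indexed by all of $\mathbb{Z}$. Once an $\mathcal{R}_0 : \mathbb{B} \to \mathbb{B}$ is in hand, clause (3) pins down every other $\mathcal{R}_m$: for $m \geq 1$ by the forward recursion $\mathcal{R}_m(x) = \langle \mathcal{R}_{m-1}(x), x \rangle^{\mathcal{B}}$, and for $m \leq -1$ by taking $\mathcal{R}_m(x)$ to be the unique $z$ with $\langle z, x \rangle^{\mathcal{B}} = \mathcal{R}_{m+1}(x)$, which is well-defined by injectivity of $\langle \cdot, \cdot \rangle^{\mathcal{B}}$ \emph{provided} such a $z$ exists. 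For the backward recursion to close up for every $m$, the value $\mathcal{R}_0(x)$ must admit an infinite descent along $x$, that is, a sequence $y_0 = \mathcal{R}_0(x), y_1, y_2, \ldots$ in $\mathbb{B}$ with $y_i = \langle y_{i+1}, x \rangle^{\mathcal{B}}$ for every $i$.

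I would construct such an $\mathcal{R}_0$ using the $\aleph_0$-saturation of $\mathcal{B}$. Fix $x \in \mathbb{B}$ and consider the 1-type $p(y; x)$ whose $n$-th formula asserts the existence of witnesses $z_1, \ldots, z_n$ with $y = \langle z_1, x \rangle$ and $z_i = \langle z_{i+1}, x \rangle$ for $1 \leq i \leq n-1$. This type is finitely satisfiable in $\mathcal{B}$: given $n$, start with any element $z_n \in \mathbb{B}$ (say $\mathsf{d}^0_0$) and build $z_{n-1}, \ldots, z_1, y$ by successive application of $\langle \cdot, x \rangle^{\mathcal{B}}$. By $\aleph_0$-saturation, $p(y; x)$ is realized, and selecting one realization for each $x$ via the axiom of choice yields the desired $\mathcal{R}_0$.

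The remaining properties are then routine consequences of the axioms of $\mathcal{B}$. Each $\mathcal{R}_m$ and each $\mathcal{A}_n$ with $n \geq 2$ satisfies $f(x) = \langle g(x), x \rangle^{\mathcal{B}}$ by construction, so injectivity of $\langle \cdot, \cdot \rangle^{\mathcal{B}}$ lets us recover $x$ from $f(x)$ via the second coordinate, giving clause (1). For clause (2), if two distinct maps in the listed family agree on some pair of inputs, the second-coordinate argument collapses those inputs to a common $x$, and then unwinding the recursive definitions first-coordinate by first-coordinate eventually forces an identity of the form $u = t(u, x)$ or $x = t(u, x)$ where $t$ is a non-variable term built from $\langle \cdot, \cdot \rangle$; this is precisely what the acyclicity axioms of $\mathcal{B}$ forbid. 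The main obstacle is the saturation step producing $\mathcal{R}_0$; everything else amounts to bookkeeping with the axioms of $\mathcal{B}$.
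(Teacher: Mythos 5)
Your construction is essentially the same as the paper's: $\mathcal{A}_n$ by forward recursion from the identity, $\mathcal{R}_0$ obtained by realizing (via $\aleph_0$-saturation and choice) the 1-type over $x$ that forces an infinite $\langle\cdot, x\rangle$-descent, and $\mathcal{R}_m$ by forward and backward recursion along that descent, with injectivity read off the second coordinate and image-disjointness from acyclicity. The only thing glossed over is that for the $\mathcal{R}_m$-vs-$\mathcal{R}_{m'}$ case of clause (2) the unwinding does not bottom out at the identity as it does for the $\mathcal{A}_n$'s; instead one unwinds $|m'-m|$ steps to recover the same index and obtain a fixed-point equation $u = t(u,x)$, which is the acyclicity violation — the paper treats this, the mixed case, and the $\mathcal{A}_n$ case separately and in full, whereas you compress them into one sentence.
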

\begin{proof}

First, we define the sequence    $  \left(  \mathcal{A}_n :    \   n  \in  \omega  \setminus  \lbrace 0  \rbrace  \,  \right)    $
by recursion:  for  $  n  \in  \omega  \setminus  \lbrace  0  \rbrace  $  and  $  w  \in  \mathbb{B}  $
\[
\mathcal{A}_n  \left(  w  \right)    =  \begin{cases}
w     &     \mbox{  if  }   n  =  1  
\\
\langle   \mathcal{A}_{n-1}  \left(  w  \right)   \,   ,  \,  w   \rangle^{   \mathcal{B} }       &   \mbox{  if   }  n  >  1  
\       .
\end{cases}
\]
We show that each   $  \mathcal{A}_n   $  is one-to-one. 
This is obvious for    $  n = 1  $. 
We consider the case  $  n >  1  $. 
Let  $  w,  v  \in  \mathbb{B}  $  be such that  $  \mathcal{A}_{n}  \left(  w  \right)    =  \mathcal{A}_{n}  \left(  v  \right)    $.
By the definition of    $  \mathcal{A}_{n}   $  
\[
\langle    \mathcal{A}_{n-1}  \left(  w  \right)   \,   ,  \,  w   \rangle^{   \mathcal{B} } 
  =   \mathcal{A}_{n}  \left(  w  \right)    
 =  \mathcal{A}_{n}  \left(  v  \right) 
 = 
 \langle    \mathcal{A}_{n-1}  \left(  v  \right)   \,   ,  \,  v   \rangle^{   \mathcal{B} } 
\    .
\]
Since  $  \langle  \cdot  ,  \cdot  \rangle^{  \mathcal{B} }  $  is one-to-one,  this implies    $  w  =  v  $. 
Thus,    $  \mathcal{A}_n   $  is one-to-one.

Next, we show that  for any two distinct  $  n, m  \in     \omega  \setminus  \lbrace  0, 1   \rbrace  $, 
the maps $    \mathcal{A}_n   $  and  $   \mathcal{A}_m $  have disjoint images. 
Without loss of generality, we may assume  $ 1 <   n <  m  $. 
Assume for the sake of a contradiction $    \mathcal{A}_n   $  and  $   \mathcal{A}_m $  do not have disjoint images. 
Then, there exist  $  w,  v  \in  \mathbb{B}  $  such that  
$     \mathcal{A}_{n}  \left(  w  \right)     =  \mathcal{A}_{m}  \left(  v  \right)     \,   $. 
Since   $  n,m >  1  $, this means that 
\[
\langle    \mathcal{A}_{n-1}  \left(  w  \right)   \,   ,  \,  w   \rangle^{   \mathcal{B} } 
  =   \mathcal{A}_{n}  \left(  w  \right)    
 =  \mathcal{A}_{m}  \left(  v  \right) 
 = 
 \langle    \mathcal{A}_{m-1}  \left(  v  \right)   \,   ,  \,  v   \rangle^{   \mathcal{B} } 
\    .
\]
Since  $  \langle  \cdot  ,  \cdot  \rangle^{  \mathcal{B} }  $  is one-to-one,  this implies    
\[
\mathcal{A}_{n-1}  \left(  w  \right)    =     \mathcal{A}_{m-1}  \left(  v  \right)    
\   \mbox{  and  }     \     w  =  v  
\      .
\]
This means that  
\[
 1  \leq  n-1   <  m-1  
 \   \mbox{  and  }    \       \mathcal{A}_{n-1}  \left(  w  \right)     =  \mathcal{A}_{m-1}  \left(  w  \right)    
 \      .
\]
If  $  n-1 =  1  $,  then  
\[
w  =   \mathcal{A}_{1}  \left(  w  \right)     =  \mathcal{A}_{m-1}  \left(  w  \right)     =
  \langle    \mathcal{A}_{m-2}  \left(  w  \right)    \,  ,  \,  w  \rangle^{  \mathcal{B} }
\]
which contradicts the fact that  $   \langle  \cdot ,  \cdot   \rangle^{  \mathcal{B}  }  $  is acyclic. 
Assume $  n-1  >  0  $.   Then 
\[
  \langle    \mathcal{A}_{n-2}  \left(  w  \right)    \,  ,  \,  w  \rangle^{  \mathcal{B} }
  =  
 \mathcal{A}_{n-1}  \left(  w  \right)     =  \mathcal{A}_{m-1}  \left(  w  \right)    
 = 
   \langle    \mathcal{A}_{m-2}  \left(  w  \right)    \,  ,  \,  w  \rangle^{  \mathcal{B} }
\       .
\]
Since    $  \langle  \cdot  ,  \cdot  \rangle^{  \mathcal{B} }  $  is one-to-one,  this implies    
\[
 1  \leq  n-2   <  m-2  
 \   \mbox{  and  }    \       \mathcal{A}_{n-2}  \left(  w  \right)     =  \mathcal{A}_{m-2}  \left(  w  \right)    
 \      .
\]
By continuing this way, we get that 
\[
1  <  m -n+1   
 \   \mbox{  and  }    \       \mathcal{A}_{1}  \left(  w  \right)     =  \mathcal{A}_{m-n+1}  \left(  w  \right)    
 \      .
\]
Hence
\[
w  =   \mathcal{A}_{1}  \left(  w  \right)        =   \mathcal{A}_{m-n+1}  \left(  w  \right)     = 
    \langle    \mathcal{A}_{m-n}  \left(  w  \right)    \,  ,  \,  w  \rangle^{  \mathcal{B} } 
\]
which contradicts the fact that  $   \langle  \cdot ,  \cdot   \rangle^{  \mathcal{B}  }  $  is acyclic. 
Thus, any two distinct maps in  $  \lbrace  \mathcal{A}_{n}  :   \   n  \in  \omega  \setminus  \lbrace 0, 1  \rbrace    \  \rbrace $  
have disjoint images.

Next,  we define the sequence   $  \left(  \mathcal{R}_m :    \   m  \in \mathbb{Z}  \,  \right)    \,  $.
We use the fact that  $  \mathcal{B}  $  is $  \aleph_0$-saturated to define  $  \mathcal{R}_0 $  
and then define the other maps  using    $  \mathcal{R}_0 $. 
For each $g  \in   \mathbb{B}  $,  we define a sequence   of formulas  in the expanded language where we have a constant symbol for $g$: 
 \[
   \Psi_0^g  (x ,  v  )   :=    v  =    \langle   x ,  g   \rangle   
   \      \mbox{  and   }      \   
   \Psi_{  n+1  }^g  (x ,  v  )   :=     \exists y   \left[   \,    \Psi_n^g  ( y ,  v )   \;  \wedge   \;     y   =    \langle   x  ,  g   \rangle     \,   \right]   
   \         .
\]
Clearly, for each  $n \in  \omega  $,   there exists  $ w_n   \in   \mathbb{B}   $    such that  
$  \mathcal{B}   \models \bigwedge_{ j =  0 }^{ n }    \exists  x  \left[  \,  \Psi_j^g  ( x,  w_n )  \,   \right]   \,   $: 
by recursion,  set  $  w_0 :=  \langle  g,  g  \rangle^{  \mathcal{B}  }  $  and  
$  w_{ n+1}  :=  \langle  w_n ,  g  \rangle^{  \mathcal{B}  }         \,   $.
Let     $  W_g $  denote the set of all elements of  $  \mathcal{B}  $  that realize  the $1$-type 
 $  \lbrace    \exists  x   \left[  \,   \Psi_n^g  (x, v )   \,   \right]      :   \   n \in  \omega   \,   \rbrace  $    over  $  \lbrace  g  \rbrace  $. 
Since     $  \mathcal{B}  $   is  $  \aleph_0 $-saturated,   $  W_g  \neq   \emptyset    $. 
 Observe that  for each   $  w  \in  W_g $,   we  also  have  $  w^{  \prime }  \in  W_g  $  such that  $  w  =  \langle  w^{  \prime } ,  g  \rangle   \,   $. 
 Thus, since  $  \langle  \cdot  ,  \cdot  \rangle^{  \mathcal{B} }  $  is acyclic,   $  W_g  $  is infinite; 
 this will be used to define  $  \mathcal{R}_m  $  for $  m \in  \mathbb{Z}_{ < 0  }     \,   $.
 Since we have a family $  \lbrace   W_g  :   \   g  \in  \mathbb{B}   \,  \rbrace  $  of non-empty sets, 
 the axiom of choice gives us  a map   $  \mathcal{R}_0     :        \mathbb{B}   \to   \mathbb{B}   $  
such that   $  \mathcal{R}_0  \left(    g  \right)      \in  W_g  $  for all $  g  \in    \mathbb{B}   $.

Let us check that     $  \mathcal{R}_0  $  is one-to-one. 
Assume we have  $  w, v  \in  \mathbb{B}  $  such that   $    \mathcal{R}_0    w  =  \mathcal{R}_0  v  $. 
We need to show that  $  w  = v  $. 
Since   for each   $  w  \in  W_g $,   we  also  have  $  w^{  \prime }   \in  W_g  $  such that  $  w  =  \langle  w^{  \prime } ,  g  \rangle     $, 
  there exist  $  g_0,  g_1  \in  \mathbb{B}  $  such that  
\[
\langle  g_0  ,  w  \rangle^{  \mathcal{B}  }   
=    \mathcal{R}_0    w  =  \mathcal{R}_0  v   =  
\langle  g_1  ,  v  \rangle^{  \mathcal{B}  }        
 \      .
\]
Since  $  \langle  \cdot   ,  \cdot     \rangle^{  \mathcal{B}  }  $  is one-to-one, 
this implies  $    w  =  v  $. 
Thus,   $     \mathcal{R}_0   $ is one-to-one.

We define  $  \mathcal{R}_m $  for  $  m  \in  \mathbb{Z} \setminus  \lbrace 0  \rbrace  $  by recursion: 
\begin{enumerate}
\item  Assume first  $  m > 0  $.   We define   $  \mathcal{R}_m :    \mathbb{B} \to  \mathbb{B}   $  using 
$  \mathcal{R}_{m-1} :    \mathbb{B} \to  \mathbb{B}   $
\[
  \mathcal{R}_m   \left(  w  \right)     =    \langle      \mathcal{R}_{m-1}   \left(  w  \right)     \,  ,   \,   w    \rangle^{  \mathcal{B}  }  
  \     \mbox{  for all }    w  \in  \mathbb{B}
  \         .
\]

\item  Next, assume  $  m  <  0  $.   We define   $  \mathcal{R}_m :    \mathbb{B} \to  \mathbb{B}   $  using 
$  \mathcal{R}_{m+1} :    \mathbb{B} \to  \mathbb{B}   $. 
We  assume     $  \mathcal{R}_{ m+1}   \left(    g  \right)      \in  W_g  $  for all $  g  \in    \mathbb{B}   $. 
Let  $  g   \in  \mathbb{B} $. 
We proceed to define    $   \mathcal{R}_{ m}   \left(    g  \right)    $. 
Since      $  \mathcal{R}_{ m+1}   \left(    g  \right)      \in  W_g  $,  there exists  $  w_g  \in  W_g  $  such that  
$  \mathcal{R}_{ m+1}   \left(    g  \right)     =  \langle  w_g  ,  g  \rangle^{  \mathcal{B}  }  $; 
this  $  w_g  $  is unique   since    $  \langle  \cdot  ,  \cdot   \rangle^{  \mathcal{B}  }  $  is one-to-one. 
We set   $      \mathcal{R}_{ m}   \left(    g  \right)    :=  w_g     \,    $. 
\end{enumerate}
The  maps  $  \mathcal{R}_m $   are one-to-one since  $  \langle  \cdot  ,  \cdot   \rangle^{  \mathcal{B}  }  $  is one-to-one. 
Furthermore,  they have disjoint images  since     $  \langle  \cdot  ,  \cdot   \rangle^{  \mathcal{B}  }  $  is acyclic
and     $  \mathcal{R}_m   \left( \cdot  \right)     =  \langle    \mathcal{R}_{ m-1}   \left( \cdot  \right)      \,  ,  \,  \cdot   \rangle^{  \mathcal{B}  }  $  for all  $  m  \in  \mathbb{Z} $. 
The arguments are similar to the arguments above for the maps  $  \mathcal{A}_n $.

Let  $  m  \in  \mathbb{Z}  $  and let  $  n  \in  \omega  \setminus  \lbrace 0, 1  \rbrace  $. 
It remains to show that    $  \mathcal{R}_m   $  and  $   \mathcal{A}_n  $  have disjoint images. 
Assume for the sake of a contradiction $    \mathcal{R}_m   $  and  $   \mathcal{A}_m $  do not have disjoint images. 
Then, there exist  $  w,  v  \in  \mathbb{B}  $  such that  
$     \mathcal{R}_{m}  \left(  w  \right)     =  \mathcal{A}_{n}  \left(  v  \right)     \,   $. 
This means that 
\[
\langle    \mathcal{R}_{m-1}  \left(  w  \right)   \,   ,  \,  w   \rangle^{   \mathcal{B} } 
  =   \mathcal{R}_{m}  \left(  w  \right)    
 =  \mathcal{A}_{n}  \left(  v  \right) 
 = 
 \langle    \mathcal{A}_{n-1}  \left(  v  \right)   \,   ,  \,  v   \rangle^{   \mathcal{B} } 
\    .
\]
Since  $  \langle  \cdot  ,  \cdot  \rangle^{  \mathcal{B} }  $  is one-to-one,  this implies    
\[
\mathcal{R}_{m-1}  \left(  w  \right)    =     \mathcal{A}_{n-1}  \left(  v  \right)    
\   \mbox{  and  }     \     w  =  v  
\      .
\]
If  $  n-1 = 1  $, then  
\[
\langle    \mathcal{R}_{m-2}  \left(  w  \right)    \,   ,  \,  w   \rangle^{   \mathcal{B} }     
= 
\mathcal{R}_{m-1}  \left(  w  \right)    =     \mathcal{A}_{n-1}  \left(  w  \right)      =  w  
\]
which contradicts the fact that  $   \langle  \cdot ,  \cdot   \rangle^{  \mathcal{B}  }  $  is acyclic. 
Assume  $  n-1 >  1  $. 
Then
\[
\langle    \mathcal{R}_{m-2}  \left(  w  \right)    \,   ,  \,  w   \rangle^{   \mathcal{B} }     
= 
\mathcal{R}_{m-1}  \left(  w  \right)    =     \mathcal{A}_{n-1}  \left(  w  \right)      =  
\langle    \mathcal{A}_{n-2}  \left(  w  \right)    \,   ,  \,  w   \rangle^{   \mathcal{B} }     
\      .
\]
Since    $   \langle  \cdot ,  \cdot   \rangle^{  \mathcal{B}  }  $ is one-to-one,  this implies  
$     \mathcal{R}_{m-2}  \left(  w  \right)     =     \mathcal{A}_{n-2}  \left(  w  \right)        \,   $.
By continuing this way, we get 
\[
\langle    \mathcal{R}_{m-n}  \left(  w  \right)    \,   ,  \,  w   \rangle^{   \mathcal{B} }     
= 
\mathcal{R}_{m-n+1}  \left(  w  \right)    =     \mathcal{A}_{1}  \left(  w  \right)      =  w  
\]
which contradicts the fact that  $   \langle  \cdot ,  \cdot   \rangle^{  \mathcal{B}  }  $  is acyclic. 
Thus,  $  \mathcal{R}_m  $  and  $  \mathcal{A}_n $  have disjoint images. 
\end{proof}

We are now ready to define the set  $  \mathbb{T} $ of non-standard elements of    $  \mathcal{M} $.
For $  n  \in \omega $, let  $  \mathbb{T}_0 :=   \big\{  \mathsf{d}^m_n :   \   (n,m)  \in  \omega  \times  \mathbb{Z} \,  \big\}  $
and 
\[
 \mathbb{T}_{n+1}  :=   \mathbb{T}_n  
 \cup 
  \lbrace \langle w, v  \rangle^{  \mathcal{B} }  :   \    w,  v  \in   \mathbb{T}_n  \,  \rbrace
 \cup  
\lbrace    \mathcal{R}_m w  :   \    w  \in   \mathbb{T}_n  \,   ,   \    m  \in  \mathbb{Z}  \,    \rbrace
  \      .
\]
Let  
\[
\mathbb{T} :=  \bigcup_{  n  \in  \omega  }   \mathbb{T}_n 
\      .
\]
Since   $  \mathbb{T}  $  is inductively  defined,  we will be able to define operations on   $ \mathbb{T}   $ by recursion 
with respect to the following rank function 
\[
\mathsf{rk} \left(  w  \right)  =  \min  \lbrace  n   \in  \omega   :   \    w  \in   \mathbb{T}_n    \,    \rbrace 
\    .
\]
Observe that by clause (3) of the preceding lemma,    
for all $  w  \in   \mathbb{T}  $,  either   $  w  \in  \big\{  \mathsf{d}^m_n :   \   (n,m)  \in  \omega  \times  \mathbb{Z} \,  \big\}  $  
or there exist  $  w_0,  w_1  \in  \mathbb{T}  $  such that   $  w  =  \langle  w_0,  w_1  \rangle^{  \mathcal{B}  }  $. 
Clearly,   $   \mathsf{rk} \left(   \langle  w_0 ,  w_1  \rangle^{  \mathcal{B} }  \right)  >  \mathsf{rk} \left(  w_1  \right)    $, 
but we may  have  $   \mathsf{rk} \left(   \langle  w_0 ,  w_1  \rangle^{  \mathcal{B} }  \right)  =  \mathsf{rk} \left(  w_0  \right)     \,     $.

We are ready to define the universe   $M$ of   $  \mathcal{M} $.
We assume  $  \mathbb{N} :=   \omega  $  and    $   \mathbb{T}   $  are disjoint. 
Let   $  M  :=  \mathbb{N}  \cup   \mathbb{T}  $.

We proceed to define the arithmetical operations on  $M$. 
We start by defining  the successor function on  $M$  by recursion: 
\[
\mathrm{S}^{  \mathcal{M}  }   g   =   \begin{cases}
g +1         &   \mbox{  if  }     g    \in  \mathbb{N}
\\
\mathsf{d}^{ m+ 1}_n        &   \mbox{  if  }     g  =  \mathsf{d}^m_n 
\\
\langle  g_0  \,  ,   \,    \mathrm{S}^{  \mathcal{M}  }   g_1   \rangle^{  \mathcal{B}   }         
                              &   \mbox{  if  }   g  \in   \mathbb{T}       \mbox{  and  }     g  =  \langle  g_0,  g_1  \rangle^{  \mathcal{B}    }
\         .
\end{cases}
\]
The next lemma shows that  axioms $  \mathsf{Q}_1 $,  $  \mathsf{Q}_2 $,  $  \mathsf{Q}_3  $   will hold in the model $  \mathcal{M} $  we are defining.

\begin{lemma}
$  \mathrm{S}^{  \mathcal{M}  }    :  M  \to   M  \setminus  \lbrace 0  \rbrace  $  is a one-to-one map. 
Furthermore,  there exists a map $  \mathrm{P}^{  \mathcal{M}  }    :  M  \to   M   $  such that  
$   \mathrm{S}^{  \mathcal{M}  }     \mathrm{P}^{  \mathcal{M}  }  g = g  $  for all $  g \in  M \setminus  \lbrace 0 \rbrace  $.
\end{lemma}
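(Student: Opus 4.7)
The plan is to verify the two claims separately, in both cases doing the case analysis dictated by the recursive definition of $\mathrm{S}^{\mathcal{M}}$ and invoking the inductive structure of $\mathbb{T}$ through the rank function $\mathsf{rk}$ introduced just before the lemma.

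First, I would observe that the recursive definition of $\mathrm{S}^{\mathcal{M}}$ is unambiguous: the three clauses correspond to the disjoint cases $g \in \mathbb{N}$, $g \in \big\{\mathsf{d}^m_n : (n,m) \in \omega \times \mathbb{Z}\big\}$, and $g \in \mathbb{T}$ of the form $\langle g_0, g_1 \rangle^{\mathcal{B}}$; these are mutually exclusive since $\mathbb{N}$ and $\mathbb{T}$ are disjoint, axiom (2) of $\mathcal{B}$ forbids $\mathsf{d}^m_n$ from being of the form $\langle \cdot, \cdot \rangle^{\mathcal{B}}$, and injectivity of $\langle \cdot, \cdot \rangle^{\mathcal{B}}$ makes the decomposition unique. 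A quick induction on $\mathsf{rk}(g)$ shows that $\mathrm{S}^{\mathcal{M}}$ sends $\mathbb{T}$ into $\mathbb{T}$, and obviously sends $\mathbb{N}$ into $\mathbb{N} \setminus \{0\}$, so $\mathrm{S}^{\mathcal{M}} g \neq 0$ for every $g \in M$.

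Second, for injectivity, suppose $\mathrm{S}^{\mathcal{M}} g = \mathrm{S}^{\mathcal{M}} h$. Because $\mathrm{S}^{\mathcal{M}}$ preserves the partition $M = \mathbb{N} \sqcup \mathbb{T}$, the two elements lie in the same half. If both are in $\mathbb{N}$ the conclusion is immediate. If both are in $\mathbb{T}$, I would argue by induction on $\mathsf{rk}(g) + \mathsf{rk}(h)$, splitting into the cases where each of $g, h$ is a constant $\mathsf{d}^m_n$ or a pair $\langle \cdot, \cdot \rangle^{\mathcal{B}}$. The mixed sub-case is excluded by axiom (2) of $\mathcal{B}$; the constant–constant sub-case uses that the symbols $\mathsf{d}^m_n$ are pairwise distinct together with injectivity of the map $m \mapsto m+1$ on $\mathbb{Z}$; and the pair–pair sub-case uses injectivity of $\langle \cdot, \cdot \rangle^{\mathcal{B}}$ together with the induction hypothesis applied to the second coordinates.

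Finally, I would define $\mathrm{P}^{\mathcal{M}}$ case by case, mirroring $\mathrm{S}^{\mathcal{M}}$: put $\mathrm{P}^{\mathcal{M}} 0 := 0$; on $\mathbb{N} \setminus \{0\}$ set $\mathrm{P}^{\mathcal{M}} n := n-1$; on $\mathsf{d}^m_n$ set $\mathrm{P}^{\mathcal{M}} \mathsf{d}^m_n := \mathsf{d}^{m-1}_n$, which is legitimate since $m$ ranges over all of $\mathbb{Z}$; and on pairs set $\mathrm{P}^{\mathcal{M}} \langle g_0, g_1 \rangle^{\mathcal{B}} := \langle g_0, \mathrm{P}^{\mathcal{M}} g_1 \rangle^{\mathcal{B}}$, where the inner predecessor is defined by recursion on $\mathsf{rk}(g_1)$. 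A parallel induction on rank shows that $\mathrm{P}^{\mathcal{M}}$ sends $\mathbb{T}$ into $\mathbb{T}$, and then a direct verification in each clause gives $\mathrm{S}^{\mathcal{M}} \mathrm{P}^{\mathcal{M}} g = g$ for every $g \in M \setminus \{0\}$.

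The main obstacle I anticipate is purely bookkeeping: one must be careful that every recursion on pairs is controlled by a strictly smaller $\mathsf{rk}$-value of the second coordinate, and that the values produced for the predecessor of a pair really lie in $\mathbb{T}$ rather than only in $\mathbb{B}$. Both points follow from the inductive definition of $\mathbb{T}_{n+1}$ from $\mathbb{T}_n$ and the fact that $\mathsf{rk}(\langle g_0, g_1 \rangle^{\mathcal{B}}) > \mathsf{rk}(g_1)$ noted just before the lemma, but they should be stated explicitly so that the recursions are demonstrably well-founded.
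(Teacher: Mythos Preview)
Your proposal is correct and follows essentially the same approach as the paper: both arguments verify that $\mathrm{S}^{\mathcal{M}}$ respects the partition $M = \mathbb{N} \sqcup \mathbb{T}$, prove injectivity on $\mathbb{T}$ by an induction that splits into the constant and pair cases (using axiom~(2) of $\mathcal{B}$ to rule out the mixed case and injectivity of $\langle\cdot,\cdot\rangle^{\mathcal{B}}$ plus the induction hypothesis on second coordinates for the pair--pair case), and then define $\mathrm{P}^{\mathcal{M}}$ by the mirror recursion you wrote down. The only cosmetic difference is that the paper inducts on the complexity of $g$ alone (with $h$ universally quantified) rather than on $\mathsf{rk}(g)+\mathsf{rk}(h)$, and omits your explicit remarks on well-definedness.
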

\begin{proof}

We have  $  \mathrm{S}^{  \mathcal{M}  }  g  \neq 0 $  for all $  g \in  M  $ since: 
(i)   $  \mathrm{S}^{  \mathcal{M}  }   m   \in  \mathbb{N}_{ >0 }  $  for all $ m \in \mathbb{N} $; 
(ii)  $  \mathrm{S}^{  \mathcal{M}  }   g  \in  \mathbb{T}  $  for all $  g \in  \mathbb{T} $. 
 
We verify that     $  \mathrm{S}^{  \mathcal{M}  }   $  is one-to-one. 
It suffices to show that  $  \mathrm{S}^{  \mathcal{M}  }     \upharpoonright  \mathbb{T} $  is one-to-one. 
Pick  $  g  \in  \mathbb{T} $. 
We prove  by induction on the complexity of  $  g $ that for all $  h  \in  \mathbb{T} $,  
we have  $  g  = h  $ whenever $   \mathrm{S}^{  \mathcal{M}  }  g   =  \mathrm{S}^{  \mathcal{M}  }  h  $. 
For the base case, assume  $  g  =  \mathsf{d}^m_n $  where  $  (n, m )  \in  \omega  \times   \mathbb{Z} $. 
Assume $    \mathrm{S}^{  \mathcal{M}  }  g   =  \mathrm{S}^{  \mathcal{M}  }  h  $. 
Then 
\[
\mathsf{d}^{m+1}_n  =   \mathrm{S}^{  \mathcal{M}  }  g   =  \mathrm{S}^{  \mathcal{M}  }  h
\]
and from this  it follows that we must have  $  h  =  \mathsf{d}^m_n  $.

For the inductive case,   assume $  g  =  \langle   w,  v  \rangle^{  \mathcal{B} }  $  where  $  w, v  \in  \mathbb{T} $. 
Assume    $   \mathrm{S}^{  \mathcal{M}  }  g   =  \mathrm{S}^{  \mathcal{M}  }  h  $. 
Then 
\[
\langle   w  \, ,  \,     \mathrm{S}^{  \mathcal{M}  }   v  \rangle^{  \mathcal{B} }  
=     \mathrm{S}^{  \mathcal{M}  }  g   =  \mathrm{S}^{  \mathcal{M}  }  h   
\     .
\]
It  follows that  we cannot have  $  h \in  \big\{  \mathsf{d}^m_n :   \  (n,m)  \in  \omega  \times  \mathbb{Z}  \,   \big\}  $. 
There thus exist  $  h_0, h_1  \in  \mathbb{T}  $  such that  
$  h  =  \langle  h_0,  h_1  \rangle^{  \mathcal{B} }  $. 
Hence 
\[
\langle   w  \, ,  \,     \mathrm{S}^{  \mathcal{M}  }   v  \rangle^{  \mathcal{B} }  
= \mathrm{S}^{  \mathcal{M}  }  h   
= 
\langle   h_0   \, ,  \,     \mathrm{S}^{  \mathcal{M}  }   h_1  \rangle^{  \mathcal{B} }  
\       .
\]
Since  $  \langle  \cdot  ,  \cdot    \rangle^{  \mathcal{B} }    $  is one-to-one,  
$  w  = h_0  $  and   $  \mathrm{S}^{  \mathcal{M}  }   v  =    \mathrm{S}^{  \mathcal{M}  }   h_1  $. 
By the induction hypothesis, the latter gives  $  v = h_1  $. 
Thus,  $  g  =   \langle  w,  v   \rangle^{  \mathcal{B} }  =      \langle  h_0,  h_1  \rangle^{  \mathcal{B} } =  h  $.

Thus, by induction,  $   \mathrm{S}^{  \mathcal{M}  }   $  is one-to-one.

Finally, we define the predecessor function by recursion  
\[
\mathrm{P}^{  \mathcal{M}  }   g   =   \begin{cases}
0        &    \mbox{  if  } g = 0 
\\
g -1         &   \mbox{  if  }     g    \in  \mathbb{N}_{ > 0 } 
\\
\mathsf{d}^{ m- 1}_n        &   \mbox{  if  }     g  =  \mathsf{d}^m_n 
\\
\langle  g_0  \,  ,   \,    \mathrm{P}^{  \mathcal{M}  }   g_1   \rangle^{  \mathcal{B}    }         
                              &   \mbox{  if  }    g  \in  \mathbb{T}      \mbox{  and  }     g  =  \langle  g_0,  g_1  \rangle^{  \mathcal{B}   }
\         .
\end{cases}
\]
This completes the proof of the lemma.
\end{proof}

We define $  +^{  \mathcal{M} }  $   as follows 
\[
g_0 +^{  \mathcal{M}  }   g_1   =   \begin{cases}
g_0 +  g_1          &     \mbox{  if  }    g_0,  g_1   \in  \mathbb{N} 
\\
\langle  g_0,  g_1  \rangle^{  \mathcal{B}  }      &     \mbox{  if   }  g_0,  g_1   \in   \mathbb{T} 
\\
  \left(    \mathrm{S}^{  \mathcal{M}  }  \right)^{ g_0 }   g_1                     
               & \mbox{  if  }     g_0  \in  \mathbb{N}       \    \mbox{  and    }    \   g_1  \in  \mathbb{T}    
\\
  \left(    \mathrm{S}^{  \mathcal{M}  }  \right)^{ g_1 }   g_0          
                     & \mbox{  if  }     g_1  \in  \mathbb{N}      \    \mbox{  and    }    \          g_0  \in  \mathbb{T}    
\         .
\end{cases}
\]
The next lemma shows that   $  \mathsf{Q}_4,  \mathsf{Q}_5  $  will hold in   the model $  \mathcal{M} $ we are defining. 
Furthermore, by Theorem  \ref{SectionQTreeModelTheorem}, we will also have  $  \mathcal{M}  \models  \Theta   $.

\begin{lemma}
The following holds:
\begin{enumerate}
\item The restriction   $ +^{  \mathcal{M}  }  :     \left( M  \setminus  \mathbb{N} \right)  \times    \left( M  \setminus  \mathbb{N} \right)   \to    M  \setminus  \mathbb{N} $
is  one-to-one. 

\item   Let   $ t ( \vec{x} ) $ be a term   in the language  $  \lbrace + \rbrace $  which is not a variable and  the displayed variables are all the variables that occur in     $ t ( \vec{x} ) $. 
For any tuple  $    \vec{w}  $ in $  M  \setminus  \mathbb{N} $,   we have 
$ t^{  \mathcal{M}  } ( w_0, w_1,  \ldots , w_{ \ell } )   \neq  w_i $  for each  $ i  \in  \lbrace 0, 1,  \ldots ,  \ell  \rbrace $.

\item   $  m  +^{  \mathcal{M}  }  g  =  g   +^{  \mathcal{M}  }   m  $   for  all   $  m \in  \mathbb{N} $  and  all   $ g  \in M $.

\item  $  g +^{  \mathcal{M}  }   0  =  g  $  for all $  g \in  M  $. 

\item   $ g_0   +^{  \mathcal{M}  }    \mathrm{S}^{ \mathcal{M}  }  g_1  =   \mathrm{S}^{ \mathcal{M}  }   \left(    g_0   +^{  \mathcal{M}  }    g_1  \right)  $ 
for all $  g_0, g_1  \in M $.
\end{enumerate}
\end{lemma}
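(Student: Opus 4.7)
The plan is to verify each clause by reducing to properties already established: clauses (1) and (2) follow from the fact that on $\mathbb{T}^2$ the operation $+^{\mathcal{M}}$ literally coincides with $\langle \cdot, \cdot \rangle^{\mathcal{B}}$ and $\mathbb{T}$ is closed under it, while clauses (3)--(5) are case analyses on the piecewise definitions of $+^{\mathcal{M}}$ and $\mathrm{S}^{\mathcal{M}}$.

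For clause (1), I would observe that $M \setminus \mathbb{N} = \mathbb{T}$, and on $\mathbb{T}^2$ the definition of $+^{\mathcal{M}}$ reduces to $\langle \cdot, \cdot \rangle^{\mathcal{B}}$. Since $\mathbb{T}_{n+1}$ contains $\langle w,v \rangle^{\mathcal{B}}$ whenever $w,v \in \mathbb{T}_n$, the image lies in $\mathbb{T}$, and one-to-oneness is inherited from axiom (1) of $\mathcal{B}$. For clause (2), I would argue by structural induction on the term $t(\vec{x})$ that, for any tuple $\vec{w}$ from $\mathbb{T}$, every subterm evaluates inside $\mathbb{T}$; this uses closure of $\mathbb{T}$ under $\langle \cdot, \cdot \rangle^{\mathcal{B}}$ again. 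Hence $t^{\mathcal{M}}(\vec{w}) = \tilde{t}^{\mathcal{B}}(\vec{w})$, where $\tilde{t}$ is the $L$-term obtained from $t$ by replacing $+$ by $\langle \cdot, \cdot \rangle$. Since $t$ is not a variable, neither is $\tilde{t}$, so the acyclicity axiom schema of $\mathcal{B}$ gives $\tilde{t}^{\mathcal{B}}(\vec{w}) \neq w_i$ for each $i$.

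For clause (3), the case $m,g \in \mathbb{N}$ is immediate; when $m \in \mathbb{N}$ and $g \in \mathbb{T}$, both $m +^{\mathcal{M}} g$ and $g +^{\mathcal{M}} m$ reduce by distinct clauses of the definition to $\bigl(\mathrm{S}^{\mathcal{M}}\bigr)^m g$, so they agree. Clause (4) splits into $g = 0$ (direct), $g \in \mathbb{N}_{>0}$ (standard arithmetic), and $g \in \mathbb{T}$ (where the clause ``$g_1 \in \mathbb{N}$ and $g_0 \in \mathbb{T}$'' gives $g +^{\mathcal{M}} 0 = \bigl(\mathrm{S}^{\mathcal{M}}\bigr)^0 g = g$).

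For clause (5), I would split on which of $g_0, g_1$ are standard. The three mixed or fully standard cases reduce to the identity $\bigl(\mathrm{S}^{\mathcal{M}}\bigr)^{k+1} h = \mathrm{S}^{\mathcal{M}}\bigl(\mathrm{S}^{\mathcal{M}}\bigr)^k h$, applied with the roles of the standard exponent played by $g_0$, $g_1$, or the genuine natural number successor. The potentially delicate case is $g_0, g_1 \in \mathbb{T}$: here $\mathrm{S}^{\mathcal{M}} g_1 \in \mathbb{T}$ (since the recursive clause of $\mathrm{S}^{\mathcal{M}}$ keeps $\mathbb{T}$ invariant), so
\[
g_0 +^{\mathcal{M}} \mathrm{S}^{\mathcal{M}} g_1 \;=\; \langle g_0,\, \mathrm{S}^{\mathcal{M}} g_1 \rangle^{\mathcal{B}} \;=\; \mathrm{S}^{\mathcal{M}} \langle g_0, g_1 \rangle^{\mathcal{B}} \;=\; \mathrm{S}^{\mathcal{M}} (g_0 +^{\mathcal{M}} g_1),
\]
where the middle equality is precisely the third clause in the recursive definition of $\mathrm{S}^{\mathcal{M}}$ on pair-elements of $\mathbb{T}$. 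I expect clause (2) to be the only step requiring genuine care, because one must track that no subterm evaluation accidentally lands in $\mathbb{N}$ and thereby escape the translation to the pairing language; once closure of $\mathbb{T}$ under $+^{\mathcal{M}}$ on $\mathbb{T}^2$ is invoked, the reduction to the acyclicity axioms of $\mathcal{B}$ is routine.
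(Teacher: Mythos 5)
Your proof is correct and follows essentially the same route as the paper: clauses (1) and (2) reduce to one-to-oneness and acyclicity of $\langle\cdot,\cdot\rangle^{\mathcal{B}}$ via the observation that $+^{\mathcal{M}}$ coincides with $\langle\cdot,\cdot\rangle^{\mathcal{B}}$ on $\mathbb{T}^2$ and $\mathbb{T}$ is closed under it, clauses (3) and (4) are immediate from the case definitions, and clause (5) is a case analysis hinging on the identity $\mathrm{S}^{\mathcal{M}}\langle g_0,g_1\rangle^{\mathcal{B}}=\langle g_0,\mathrm{S}^{\mathcal{M}}g_1\rangle^{\mathcal{B}}$ in the $\mathbb{T}^2$ case. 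The paper is terser (dismissing (1)--(4) in a sentence each and only writing out (5)), but your filled-in details for (2) and the mixed cases of (5) are accurate and add nothing inconsistent with the paper's argument.
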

\begin{proof}
Clause (1) holds since $  \langle \cdot  , \cdot  \rangle^{   \mathcal{B}  }  $  is one-to-one. 
Clause (2) holds since  $  \langle \cdot  , \cdot  \rangle^{   \mathcal{B}  }  $ is acyclic. 
Clauses (3) and (4) hold by how  $  +^{  \mathcal{M}  }   $ is defined. 
It remains to verify that (5) holds. 
We have 
\begin{align*}
g_0   +^{  \mathcal{M} }    \mathrm{S}^{  \mathcal{M}  }  g_1 
&=   
\begin{cases}
\left(    \mathrm{S}^{  \mathcal{M}  }  \right)^{ g_1 +1  }   g_0         &    \mbox{ if  }  g_1  \in  \mathbb{N}
\vspace*{0.08cm}
\\
\left(    \mathrm{S}^{  \mathcal{M}  }  \right)^{ g_0   }    \left(    \mathrm{S}^{  \mathcal{M}  }   g_1    \right) 
                             &    \mbox{ if  }  g_0  \in  \mathbb{N}   
\vspace*{0.08cm}
 \\
\langle  g_0   \,  ,   \,    \mathrm{S}^{  \mathcal{M} } g_1     \rangle^{  \mathcal{B}    }       &    \mbox{ if  }     g_0,  g_1  \in  \mathbb{T} 
\end{cases}
\\
\\
&=   
\begin{cases}
 \mathrm{S}^{  \mathcal{M}  }   \left(  \left(    \mathrm{S}^{  \mathcal{M}  }  \right)^{ g_1  }   g_0     \right)       &    \mbox{ if  }  g_1  \in  \mathbb{N}
\vspace*{0.08cm}
\\
 \mathrm{S}^{  \mathcal{M}  }   \left(  \left(    \mathrm{S}^{  \mathcal{M}  }  \right)^{ g_0  }   g_1     \right)        
              &    \mbox{ if  }  g_0  \in  \mathbb{N}  
\vspace*{0.08cm}
 \\
 \mathrm{S}^{  \mathcal{M} }  \left(  \langle  g_0   \,  ,   \,    g_1     \rangle^{  \mathcal{B}   }   \right)      &    \mbox{ if  }     g_0,  g_1  \in  \mathbb{T} 
\end{cases}
\\
\\
&=   
 \mathrm{S}^{  \mathcal{M} }  \left(     g_0   +^{  \mathcal{M} }    g_1   \right) 
\end{align*}
which shows that  (5) holds. 
\end{proof}

All that remains it to define   $  \times^{  \mathcal{M}  } $. 
First,   we define the map  $   \chi :  \mathbb{T}  \to   \mathbb{Z}  $  by recursion 
\[
\chi  \left( g  \right)    \begin{cases}
m                 &      \mbox{  if   }  g =  \mathsf{d}^m_n     
\\
\chi  \left( g_1  \right)     
&    \mbox{  if   }    \left(  \exists  g_0  \in  \mathbb{T}  \right)  \left[   \,   g =  \langle g_0, g_1  \rangle^{  \mathcal{B}   }   \,    \right]   
\      .
\end{cases}
\]
Observe that     $  \chi  \left(  \mathrm{S}^{  \mathcal{M}  } g  \right)       =    \chi  \left( g  \right)      +1  $
and   $  \chi  \left(  \mathrm{P}^{  \mathcal{M}  } g  \right)       =    \chi  \left( g  \right)      - 1  $
  for all $  g  \in  \mathbb{T}  $.

We define    $   \times^{  \mathcal{M}  }   $  as follows 
\[
g_0 \times^{  \mathcal{M}  }   g_1   =   \begin{cases}
g_0 \times  g_1       &    \mbox{  if  }   g_0, g_1  \in  \mathbb{N} 
\\
\mathsf{d}_0^{   g_0   \times  \chi \left(  g_1  \right)  }          &    \mbox{  if  }   g_0  \in  \mathbb{N}   \mbox{  and  }    g_1  \in  \mathbb{T}
\\ 
0             &  \mbox{  if  }    g_0  \in  \mathbb{T}    \mbox{  and   }   g_1  =  0 
\\
\mathcal{A}_{ g_1  }  \left(  g_0  \right)    &    \mbox{  if  }    g_0  \in  \mathbb{T}    \mbox{  and   }    g_1    \in    \mathbb{N}_{ >0 }
 \\
 \mathcal{R}_{   \chi  \left( g_1  \right)  }   \left(   g_0    \right)                  & \mbox{  if  }     g_0, g_1  \in  \mathbb{T}
 \         .
\end{cases}
\]

\begin{lemma}
The following holds:
\begin{enumerate}
\item  $  g \times^{  \mathcal{M}  }   0  =  0  $  for all $  g \in  M  $. 

\item   $ g_0   \times^{  \mathcal{M}  }    \mathrm{S}^{ \mathcal{M}  }  g_1  =   \left( g_0 \times^{  \mathcal{M}  }   g_1  \right)   +^{  \mathcal{M}  }    g_0  $ 
for all $  g_0, g_1  \in M $.
\end{enumerate}
\end{lemma}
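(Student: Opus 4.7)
The plan is to verify both clauses directly from the definitions of $\times^{\mathcal{M}}$, $+^{\mathcal{M}}$, and $\mathrm{S}^{\mathcal{M}}$. Clause (1) splits into two subcases according to whether $g \in \mathbb{N}$ or $g \in \mathbb{T}$: in the first, $g \times^{\mathcal{M}} 0 = g \times 0 = 0$ by the first clause of the definition of $\times^{\mathcal{M}}$; in the second, the third clause of the definition gives $g \times^{\mathcal{M}} 0 = 0$ directly. So (1) is immediate.

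For clause (2) I would carry out a case analysis on the pair $(g_0, g_1)$ according to the five branches used in the definition of $\times^{\mathcal{M}}$. Before starting the cases, I would record two auxiliary facts to be used repeatedly: first, $\chi(\mathrm{S}^{\mathcal{M}} g) = \chi(g) + 1$ for $g \in \mathbb{T}$, which follows from the definition of $\chi$ together with the recursive definition of $\mathrm{S}^{\mathcal{M}}$ on $\mathbb{T}$; second, $(\mathrm{S}^{\mathcal{M}})^{k} \mathsf{d}_0^m = \mathsf{d}_0^{m+k}$ for any $k \in \mathbb{N}$, obtained by iterating the recursive definition of $\mathrm{S}^{\mathcal{M}}$ on the constants $\mathsf{d}_n^m$; and third, $\mathcal{A}_n(g_0), \mathcal{R}_m(g_0) \in \mathbb{T}$ whenever $g_0 \in \mathbb{T}$, the former by an induction on $n$ using $\mathcal{A}_1 = \mathsf{id}$ and the recursive identity $\mathcal{A}_n(\cdot) = \langle \mathcal{A}_{n-1}(\cdot), \cdot \rangle^{\mathcal{B}}$, the latter by the closure clause in the definition of $\mathbb{T}$.

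With these in hand, each case is a short direct computation. If $g_0, g_1 \in \mathbb{N}$, both sides equal $g_0 \times g_1 + g_0$ in $\mathbb{N}$. If $g_0 \in \mathbb{N}$ and $g_1 \in \mathbb{T}$, the left-hand side is $\mathsf{d}_0^{g_0 \times \chi(\mathrm{S}^{\mathcal{M}} g_1)} = \mathsf{d}_0^{g_0 \chi(g_1)+g_0}$; the right-hand side is $\mathsf{d}_0^{g_0 \chi(g_1)} +^{\mathcal{M}} g_0 = (\mathrm{S}^{\mathcal{M}})^{g_0}\mathsf{d}_0^{g_0 \chi(g_1)} = \mathsf{d}_0^{g_0 \chi(g_1) + g_0}$. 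If $g_0 \in \mathbb{T}$ and $g_1 = 0$, then the left-hand side is $\mathcal{A}_1(g_0) = g_0$, while the right-hand side is $0 +^{\mathcal{M}} g_0 = (\mathrm{S}^{\mathcal{M}})^0 g_0 = g_0$. If $g_0 \in \mathbb{T}$ and $g_1 \in \mathbb{N}_{>0}$, the recursion clause for $\mathcal{A}_{g_1 + 1}$ gives the left-hand side $\langle \mathcal{A}_{g_1}(g_0), g_0 \rangle^{\mathcal{B}}$, and since $\mathcal{A}_{g_1}(g_0) \in \mathbb{T}$, the second clause of the definition of $+^{\mathcal{M}}$ yields the same value on the right. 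Finally, if $g_0, g_1 \in \mathbb{T}$, then $\chi(\mathrm{S}^{\mathcal{M}} g_1) = \chi(g_1)+1$ together with the recursion for $\mathcal{R}_{m}$ give the left-hand side $\langle \mathcal{R}_{\chi(g_1)}(g_0), g_0 \rangle^{\mathcal{B}}$, which equals $\mathcal{R}_{\chi(g_1)}(g_0) +^{\mathcal{M}} g_0$ since both arguments lie in $\mathbb{T}$.

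No step is truly delicate; the only place where one must be careful is in the mixed case $g_0 \in \mathbb{N}$, $g_1 \in \mathbb{T}$, where one has to bookkeep that $\mathsf{d}_0^{g_0 \chi(g_1)} \in \mathbb{T}$, so that $+^{\mathcal{M}}$ falls into the $(\mathbb{N}, \mathbb{T})$ branch rather than the $(\mathbb{T}, \mathbb{T})$ branch, and then to unfold the iterated successor on the constants $\mathsf{d}_0^m$. Everything else is just matching recursion clauses, so the proof is essentially bookkeeping.
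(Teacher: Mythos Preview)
Your proof is correct and follows essentially the same approach as the paper: a direct case analysis on whether $g_0$ and $g_1$ lie in $\mathbb{N}$ or $\mathbb{T}$, unfolding the definitions of $\times^{\mathcal{M}}$, $+^{\mathcal{M}}$, $\mathrm{S}^{\mathcal{M}}$, $\chi$, $\mathcal{A}_n$, and $\mathcal{R}_m$ in each case. The paper organizes the case split primarily on $g_1$ (first $g_1\in\mathbb{N}$, then $g_1\in\mathbb{T}$) with subcases on $g_0$, whereas you list five cases on the pair $(g_0,g_1)$ and record the auxiliary identities $\chi(\mathrm{S}^{\mathcal{M}}g)=\chi(g)+1$ and $(\mathrm{S}^{\mathcal{M}})^k\mathsf{d}_0^m=\mathsf{d}_0^{m+k}$ explicitly up front, but the computations are identical. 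One cosmetic slip: in the mixed case $g_0\in\mathbb{N}$, $g_1\in\mathbb{T}$, the relevant branch of $+^{\mathcal{M}}$ for $\mathsf{d}_0^{g_0\chi(g_1)}+^{\mathcal{M}}g_0$ is the $(\mathbb{T},\mathbb{N})$ branch, not the $(\mathbb{N},\mathbb{T})$ branch as you write, though both yield $(\mathrm{S}^{\mathcal{M}})^{g_0}\mathsf{d}_0^{g_0\chi(g_1)}$ so the conclusion is unaffected.
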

\begin{proof}

Clause (1) holds  by how   $  \times^{  \mathcal{M}  }    $  is defined. 
We check that (2) holds. 
We have two cases: 
(2a)  $  g_1  \in  \mathbb{N} $; 
(2b) $  g_1  \in  \mathbb{T} $.
We consider (2a). 
We have 
\begin{align*}
 g_0  \times^{  \mathcal{M}  }    \mathrm{S}^{ \mathcal{M}  }  g_1    
 &  =  \begin{cases}
 g_0  \times  (g_1+1)          &   \mbox{  if  }    g_0   \in  \mathbb{N}
 \\
 \mathcal{A}_{ g_1+1  }  \left(  g_0   \right)      &  \mbox{  if  }   g_0   \in  \mathbb{T}
 \end{cases}
 \\
 &  =  \begin{cases}
 \left(  g_0  \times  g_1   \right) + g_0           &   \mbox{  if  }    g_0   \in  \mathbb{N}
 \\
  \mathcal{A}_{ 1  }  \left(  g_0   \right)   = 0 +^{  \mathcal{M}  }  g_0           
                            &   \mbox{  if  }    g_0   \in  \mathbb{T}       \mbox{  and  }   g_1  =  0 
 \\
 \langle     \mathcal{A}_{ g_1  }  \left(  g_0   \right)   \,  ,   \,   g_0     \rangle^{   \mathcal{B}  }      
      &  \mbox{  if  }   g_0   \in  \mathbb{T}   \mbox{  and  }   g_1  \in  \mathbb{N}_{ > 0 }
 \end{cases} 
 \\
 & =   \left( g_0 \times^{  \mathcal{M}  }   g_1  \right)   +^{  \mathcal{M}  }    g_0 
 \     .
\end{align*}

Finally,  we consider (2b). 
We have 
\begin{align*}
 g_0  \times^{  \mathcal{M}  }    \mathrm{S}^{ \mathcal{M}  }  g_1    
 &  =  \begin{cases}
\mathsf{d}_0^{   g_0   \times  \chi \left(   \mathrm{S}^{ \mathcal{M}  } g_1  \right)  }         &   \mbox{  if  }    g_0   \in  \mathbb{N}
 \\
 \mathcal{R}_{ \chi \left(   \mathrm{S}^{ \mathcal{M}  } g_1  \right)   }  \left(  g_0   \right)      &  \mbox{  if  }   g_0   \in  \mathbb{T}
 \end{cases}
 \\
 &  =  \begin{cases}
\mathsf{d}_0^{   g_0   \times  \left(  \chi \left(  g_1  \right)  + 1  \right)   }         &   \mbox{  if  }    g_0   \in  \mathbb{N}
 \\
 \mathcal{R}_{ \chi \left(    g_1  \right)  +1  }  \left(  g_0   \right)      &  \mbox{  if  }   g_0   \in  \mathbb{T}
 \end{cases}
  \\
 &  =  \begin{cases}
 \left(   \mathrm{S}^{ \mathcal{M}  }  \right)^{ g_0 }   \left(  \mathsf{d}_0^{   g_0   \times    \chi \left(  g_1  \right)    }   \right)    
                 &   \mbox{  if  }    g_0   \in  \mathbb{N}
 \\
 \langle    \mathcal{R}_{ \chi \left(    g_1  \right)   }  \left(  g_0   \right)    \,  ,  \,  g_0   \rangle^{  \mathcal{B}  }
                     &  \mbox{  if  }   g_0   \in  \mathbb{T}
 \end{cases}
 \\
 & =   \left( g_0 \times^{  \mathcal{M}  }   g_1  \right)   +^{  \mathcal{M}  }    g_0 
 \     .    \qedhere
\end{align*}
\end{proof}

Theorem  \ref{SectionQTreeModelTheorem}  and  the lemmas we have proved show that the structure $  \mathcal{M} $ we have defined is a model of   $  \mathsf{Q} +  \Theta     $.

\begin{theorem}
$  \mathcal{M}  \models   \mathsf{Q} +  \Theta    \,   $.
\end{theorem}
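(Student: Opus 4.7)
The plan is that this theorem is essentially a bookkeeping exercise: all the substantive work has been carried out in the preceding lemmas, and what remains is to verify axiom by axiom that $\mathcal{M}$ satisfies $\mathsf{Q}+\Theta$. First I would address the successor axioms $\mathsf{Q}_1$ and $\mathsf{Q}_2$: these follow immediately from the lemma establishing that $\mathrm{S}^{\mathcal{M}}\colon M\to M\setminus\{0\}$ is a one-to-one map. For $\mathsf{Q}_3$, I would note that the predecessor function $\mathrm{P}^{\mathcal{M}}$ constructed in that same lemma provides, for any $g\in M\setminus\{0\}$, a witness $\mathrm{P}^{\mathcal{M}}g$ such that $\mathrm{S}^{\mathcal{M}}\mathrm{P}^{\mathcal{M}}g=g$; here one must check the three cases $g\in\mathbb{N}_{>0}$, $g=\mathsf{d}^{m}_n$, and $g=\langle g_0,g_1\rangle^{\mathcal{B}}$ separately, but each is by inspection of the recursive definition of $\mathrm{P}^{\mathcal{M}}$.

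Next I would handle the addition axioms $\mathsf{Q}_4$ (namely $\forall x\,[x+0=x]$) and $\mathsf{Q}_5$ (the successor recursion for $+$): these are precisely clauses (4) and (5) of the lemma on $+^{\mathcal{M}}$. Likewise, the multiplication axioms $\mathsf{Q}_6$ (namely $\forall x\,[x\times 0=0]$) and $\mathsf{Q}_7$ (the successor recursion for $\times$) are exactly clauses (1) and (2) of the lemma on $\times^{\mathcal{M}}$. Thus $\mathcal{M}\models\mathsf{Q}$.

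Finally, to conclude $\mathcal{M}\models\Theta$, I would invoke Theorem \ref{SectionQTreeModelTheorem}. To apply it, I need to check its three hypotheses on $+^{\mathcal{M}}$: (1) that $+^{\mathcal{M}}$ restricted to $(M\setminus\mathbb{N})^2$ is one-to-one into $M\setminus\mathbb{N}$; (2) that no nontrivial $+$-term in non-standard variables equals one of its variables; and (3) that standard elements commute additively with non-standard elements. But these are exactly clauses (1), (2), and (3) of the addition lemma, since on $(M\setminus\mathbb{N})^2$ the operation $+^{\mathcal{M}}$ coincides with $\langle\cdot,\cdot\rangle^{\mathcal{B}}$, which is one-to-one and acyclic by construction, and by the third case in the definition of $+^{\mathcal{M}}$ we have $m+^{\mathcal{M}}g=(\mathrm{S}^{\mathcal{M}})^m g = g+^{\mathcal{M}}m$ whenever $m\in\mathbb{N}$ and $g\in\mathbb{T}$.

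There is no real obstacle: the proof amounts to citing the four preceding lemmas in sequence and observing that their clauses cover the seven axioms of $\mathsf{Q}$ plus the three hypotheses of Theorem \ref{SectionQTreeModelTheorem}. The only place one must be a little careful is in checking $\mathsf{Q}_3$ for tree elements $g=\langle g_0,g_1\rangle^{\mathcal{B}}$, where one must verify that $\mathrm{S}^{\mathcal{M}}\mathrm{P}^{\mathcal{M}}g=g$ by unfolding the two recursive definitions and using that $\mathrm{S}^{\mathcal{M}}\mathrm{P}^{\mathcal{M}}g_1=g_1$ (which follows by induction on the rank of $g_1$, treating the base cases $g_1\in\mathbb{N}_{>0}$ and $g_1=\mathsf{d}^{m}_n$ directly).
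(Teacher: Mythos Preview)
Your proposal is correct and follows exactly the paper's approach: the paper's proof is a single sentence stating that Theorem~\ref{SectionQTreeModelTheorem} together with the preceding lemmas shows $\mathcal{M}\models\mathsf{Q}+\Theta$, and you have simply spelled out which lemma clause covers which axiom. The extra care you describe for $\mathsf{Q}_3$ is fine but unnecessary, since the lemma on $\mathrm{S}^{\mathcal{M}}$ already asserts $\mathrm{S}^{\mathcal{M}}\mathrm{P}^{\mathcal{M}}g=g$ for all $g\neq 0$ as part of its statement (and note that in the inductive case $g=\langle g_0,g_1\rangle^{\mathcal{B}}$ one has $g_1\in\mathbb{T}$, so the case $g_1\in\mathbb{N}_{>0}$ never arises).
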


\section{EF  games on $  \mathbb{T} $}

 Recall that to prove  Theorem  \ref{QPLUSCANTORPAIRING},   we need to show    that    certain expansions  of  $  \mathcal{M}    $  are  $n$-elementary equivalent. 
 We will prove this   using the characterization of  $n$-elementary equivalence in terms of   Ehrenfeucht-Fra\"iss\'e games of length  $n$. 
In this section,  
we  prove a certain extension lemma that will be used show that player  $  \exists  $  has a winning strategy.

First,  we define the class of maps we will work with. 
Let  $  X  \subseteq   \mathbb{T} $. 
We will say that a map    $  F :     X  \to   \mathbb{T}   $   is  $  \mathsf{Good}   $ if the following holds: 
\begin{enumerate}
\item  $  \left(   \forall   w  \in X  \right)   \left[  \,   F(w)  \neq  w   \rightarrow   w,  F(w)  \not\in   \lbrace  \mathsf{d}^m_0  :   \   m  \in  \mathbb{Z}  \,    \rbrace    \,    \right]   $.
\item  $  \chi \left(  w  \right)   =     \chi \left( F( w ) \right)   $  for all   $  w  \in  X   $.
\end{enumerate}
These properties have to do with how  $  \times^{  \mathcal{M}  }  $  is defined.

We will mainly be concerned with subsets of   $  \mathbb{T} $ that are closed under  $  \mathrm{S}^{ \mathcal{M} }  $  but not  $  +^{  \mathcal{M} }  $  and  $  \times^{  \mathcal{M} }  $. 
We will view these sets as   $ L_{  \mathbb{T} } $-structures where    $  L_{  \mathbb{T} } $    is  a  language where we have: 
(i)  a unary  function   symbol which is interpreted  as    $ \mathrm{S}^{  \mathcal{M} }    $; 
(ii) a ternary relation symbol which is interpreted as the graph of   $  \langle  \cdot  ,  \cdot  \rangle^{  \mathcal{B}  }  $; 
(iii) for each  $  n  \in  \omega \setminus  \lbrace 0, 1  \rbrace $,  a binary relation  symbol  which is interpreted  as    the graph of $  \mathcal{A}_n $; 
(iv) for each  $m  \in  \mathbb{Z} $,  a binary relation  symbol  which is interpreted  as    the graph of $  \mathcal{R}_m $.

The following lemma follows from  how the arithmetical operations are defined on  $   \mathcal{M}  $. 

\begin{lemma}
Let   $ X  \subseteq  \mathbb{T}  $ be closed under   $  \mathrm{S}^{  \mathcal{M}  }    $. 
Assume   $  F :    X \to  \mathbb{T} $  is a  \textsf{Good}  $ L_{  \mathbb{T} } $-embedding. 
Let  $  G  :  X  \cup  \mathbb{N} \to  M  $  be the extension of  $F$ that fixes  $ \mathbb{N}   $. 
Then,  $G$ is a partial embedding  into   $  \mathcal{M} $. 
\end{lemma}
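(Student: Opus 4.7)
To show that $G$ is a partial embedding, I plan to verify that $G$ preserves every unnested atomic formula of the arithmetic language, namely $x = y$, $x = 0$, $\mathrm{S}(x) = y$, $x + y = z$, and $x \times y = z$, on the domain $X \cup \mathbb{N}$. First I would check well-definedness and injectivity of $G$: since $\mathbb{N}$ and $\mathbb{T}$ are disjoint by construction and $F$ sends $X \subseteq \mathbb{T}$ into $\mathbb{T}$, the map $G = F \cup \mathrm{id}_{\mathbb{N}}$ is single-valued, and injectivity follows from injectivity of $F$ (as an embedding) and of the identity on $\mathbb{N}$, together with disjointness of their images. Preservation of $x = 0$ is immediate from the same disjointness observation, and preservation of $x = y$ is just injectivity of $G$.

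For $\mathrm{S}(x) = y$ with $a, b \in X \cup \mathbb{N}$, I would split on whether $a \in \mathbb{N}$ or $a \in X$. If $a \in \mathbb{N}$ then $\mathrm{S}^{\mathcal{M}}(a) \in \mathbb{N}$ and the claim is trivial; if $a \in X$, closure of $X$ under $\mathrm{S}^{\mathcal{M}}$ places $\mathrm{S}^{\mathcal{M}}(a)$ in $X$, so the claim reduces to $F(\mathrm{S}^{\mathcal{M}}(a)) = \mathrm{S}^{\mathcal{M}}(F(a))$, which holds because $F$ is an $L_{\mathbb{T}}$-embedding and $\mathrm{S}^{\mathcal{M}}$ is a function symbol of $L_{\mathbb{T}}$. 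For $x + y = z$, I would follow the four cases of the definition of $+^{\mathcal{M}}$. The pure $\mathbb{N}$ case is trivial; the two mixed cases, where $a +^{\mathcal{M}} b$ is a standard iterate of $\mathrm{S}^{\mathcal{M}}$ applied to an element of $X$, follow from closure of $X$ under $\mathrm{S}^{\mathcal{M}}$ and $F$'s preservation of $\mathrm{S}^{\mathcal{M}}$; and for $a, b \in X$ we have $a +^{\mathcal{M}} b = \langle a, b \rangle^{\mathcal{B}}$, and since the graph of $\langle \cdot, \cdot \rangle^{\mathcal{B}}$ is an $L_{\mathbb{T}}$-relation preserved by $F$ in both directions, the atomic formula $a + b = c$ translates correctly (and when $\langle a, b \rangle^{\mathcal{B}} \notin X$ both sides of the biconditional are simply false, as $F$ preserving the graph in both directions forbids any $c \in X$ with $F(c) = \langle F(a), F(b) \rangle^{\mathcal{B}}$).

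The main obstacle, as I anticipate it, is $x \times y = z$, where the two clauses of the Good hypothesis genuinely enter. I would work through the five cases in the definition of $\times^{\mathcal{M}}$. The pure $\mathbb{N}$ case and the case $g_1 = 0$ are trivial. For $a \in \mathbb{N}$, $b \in X$, one has $a \times^{\mathcal{M}} b = \mathsf{d}_0^{a \cdot \chi(b)}$; Good clause (2) gives $\chi(F(b)) = \chi(b)$, so $G(a) \times^{\mathcal{M}} G(b)$ evaluates to the same element $\mathsf{d}_0^{a \cdot \chi(b)}$, and Good clause (1), read contrapositively, ensures that $F$ fixes any $\mathsf{d}_0^m$ that happens to lie in $X$, so either both sides of the biconditional hold (with common value $\mathsf{d}_0^{a \cdot \chi(b)}$) or both fail. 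For $a \in X$, $b \in \mathbb{N}_{>0}$, preservation reduces to $F$ respecting the $L_{\mathbb{T}}$-graph of $\mathcal{A}_b$. For $a, b \in X$, $a \times^{\mathcal{M}} b = \mathcal{R}_{\chi(b)}(a)$, and Good clause (2) is precisely the identification $\chi(F(b)) = \chi(b)$ that guarantees $G(a) \times^{\mathcal{M}} G(b) = \mathcal{R}_{\chi(b)}(F(a))$, which is then linked back to $F(\mathcal{R}_{\chi(b)}(a))$ by $F$'s respect for the $L_{\mathbb{T}}$-graph of $\mathcal{R}_{\chi(b)}$. Injectivity of $G$, of $F$, of the $\mathcal{A}_n$, of the $\mathcal{R}_m$, and of $\langle \cdot, \cdot \rangle^{\mathcal{B}}$, established in the preceding lemmas, then yields the converse implications in each case and completes the verification.
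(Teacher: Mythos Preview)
Your proposal is correct and follows essentially the same approach as the paper: a case analysis on whether the arguments lie in $\mathbb{N}$ or in $X$, using the $L_{\mathbb{T}}$-embedding property for the $\mathbb{T}$-components and the two \textsf{Good} clauses exactly where the paper does (clause~(2) for the $\chi$-index in $\mathcal{R}_{\chi(b)}$ and in $\mathsf{d}_0^{a\cdot\chi(b)}$, clause~(1) to force $F$ to fix the relevant $\mathsf{d}_0^m$). One small point to make explicit: $L_{\mathbb{T}}$ carries relation symbols for $\mathcal{A}_n$ only for $n\geq 2$, so in the case $a\in X$, $b=1$ you should note separately that $\mathcal{A}_1$ is the identity and the claim is trivial there---the paper flags this with its remark ``Recall that $\mathcal{A}_1$ is the identity map.''
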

\begin{proof}

We consider  $  \mathrm{S}^{  \mathcal{M}  }  $. 
Since    $F$ is an $  L_{  \mathbb{T}  }  $-homomorphism and  $G$  fixes  $  \mathbb{N} $  
\begin{align*}
G  \left(  \mathrm{S}^{  \mathcal{M}  }   v   \right)    &=    \begin{cases} 
G  \left(  v +  1 \right)          &  \mbox{  if  }   v  \in  \mathbb{N}
 \\
 F  \left(  \mathrm{S}^{  \mathcal{M}  }    v   \right)    &   \mbox{ if  }   v  \in X  
  \end{cases} 
  \\
  & = 
   \begin{cases} 
G  \left(  v  \right)  +  1      &       \mbox{  if  }   v  \in  \mathbb{N}
 \\
  \mathrm{S}^{  \mathcal{M}  }   F (v)    &   \mbox{ if  }   v  \in X  
  \end{cases} 
  \\
  & =    \mathrm{S}^{  \mathcal{M}  }   G(v) 
  \        .
\end{align*}

We consider  $  +^{  \mathcal{M}  }   $. 
Let  $  w, u,  v  \in    X  \cup  \mathbb{N}  $. 
We need to show that  $  w  =  u   +^{  \mathcal{M}  }   v  $  if and only if  $  G(w)  =G(u)     +^{  \mathcal{M}  }  G(v)   \,   $.
It  suffices to consider the case   $  w  \in  X  $. 
We have three cases:  
(i)  $  u  \in  X  $  and  $  v  \in  \mathbb{N} $; 
(ii) $  u  \in  \mathbb{N}  $  and  $  v  \in  X  $; 
(iii)  $  u,  v  \in  X  $. 
We consider (i). 
Since  $F$ is an  $ L_{  \mathbb{T} } $-embedding 
\[
 w  =     u   +^{  \mathcal{M}  }   v  =   \left(  \mathrm{S}^{ \mathcal{M}  }  \right)^v  u  
 \   \leftrightarrow   \    
F ( w )  =
\left(  \mathrm{S}^{ \mathcal{M}  }  \right)^v    F(u)  
= 
F(u)  +^{  \mathcal{M}  }  v
\   . 
\]
Case (ii) holds by similar reasoning. 
We consider case (iii). 
Since  $F$ is an  $ L_{  \mathbb{T} } $-embedding
\[
 w  =  u   +^{  \mathcal{M}  }   v  =  \langle  u,  v   \rangle^{  \mathcal{B}  } 
 \     \leftrightarrow    \  
F(w)   =   \langle  F( u) ,   F (v)    \rangle^{  \mathcal{B}  }   =  F(u)   +^{  \mathcal{M}  }    F( v) 
\       .
\]

We consider  $  \times^{  \mathcal{M}  }  $. 
Let  $  w, u,  v  \in    X  \cup  \mathbb{N}  $. 
We need to show that  $  w  =  u   \times^{  \mathcal{M}  }   v  $  if and only if  $  G(w)  =G(u)     \times^{  \mathcal{M}  }  G(v)   \,   $.
It  suffices to consider the case   $  w  \in  X  $. 
We have two cases:  
(1)  $  u  \in  X  $  and  $  v  \in  \mathbb{N} $; 
(2) $  u  \in  \mathbb{N}  $  and  $  v  \in  X  $; 
(3)  $  u,  v  \in  X  $. 
We consider (1). 
It  suffices to consider the case  $  v  \in \mathbb{N}_{  >  0 }   $.
Recall that  $  \mathcal{A}_1  $ is the identity map.
Since  $F$ is an  $ L_{  \mathbb{T} } $-embedding 
\[
 w  =     u   \times ^{  \mathcal{M}  }   v  =    \mathcal{A}_v  \left(  u \right) 
 \   \leftrightarrow   \    
F ( w )  =
  \mathcal{A}_v  \left(    F(u) \right) 
= 
F(u)  \times^{  \mathcal{M}  }  v
\   . 
\]

We consider (2). 
Since  $F$  is   \textsf{Good}, 
$   \chi \left( v  \right)  =    \chi \left(  F(v)  \right)  $  and  
$F$  fixes  $  \big\{  \mathsf{d}^m_0  :   \   m  \in  \mathbb{Z}  \,   \big\}    \cap  X  \,   $. 
Since  $F$ is also one-to-one 
\[
 w  =     u   \times ^{  \mathcal{M}  }   v  =    \mathsf{d}_0^{  u  \times  \chi \left( v  \right) }   
 \   \leftrightarrow   \    
F ( w )  =   \mathsf{d}_0^{  u  \times  \chi \left( v  \right) }     =  
  \mathsf{d}_0^{  u  \times  \chi \left(   F(v)  \right) }   
= u  \times^{  \mathcal{M}  }    F(v)
\   . 
\]

Finally,  we consider (3). 
Since  $F$ is a  \textsf{Good}  $ L_{  \mathbb{T} } $-embedding 
\begin{multline*}
 w  =     u   \times ^{  \mathcal{M}  }   v  =    \mathcal{R}_{  \chi  \left(  v  \right)  }  \left(  u \right) 
\     \   \leftrightarrow   \    
 \\
F ( w )  =
 \mathcal{R}_{  \chi  \left(  v  \right)  }  \left(   F(u)  \right)   = 
  \mathcal{R}_{  \chi  \left(  F(v)  \right)   } \left(   F(u)  \right)    = 
F(u)  \times^{  \mathcal{M}  }    F(v)
\       .       \qedhere
\end{multline*}
\end{proof}

We define the subsets of  $  \mathbb{T}  $  we will be concerned with. 
Let  $  X  \subseteq  \mathbb{T}  $. 
First,  we  define  the    sets    $  \mathcal{V}_k     \left(  X \right)   $   by recursion  on $  k \in  \omega  $:  
$  \mathcal{V}_0     \left(   X \right)   =  X $  and 
\begin{multline*}
\mathcal{V}_{ k+1}     \left( X  \right)    =  \mathcal{V}_k    \left(  X \right)  
 \cup     
 \\
 \Big\{  u   \in  \mathbb{T}   :   \     
      \left(  \exists   w  \in  \mathcal{V}_k  \left(   X  \right)    \right)    \left(  \exists  v  \in  \mathbb{T}  \right)  
      \left[  \,   w  =  \langle  u,  v  \rangle^{  \mathcal{B}  }     \;  \vee   \;    w  =  \langle  v,  u  \rangle^{  \mathcal{B}  }     \,   \right]  
      \    \Big\} 
     \        .
\end{multline*}
Let  
\[
 \mathcal{V}     \left(   X \right)    :=  \bigcup_{ k  \in  \omega  }      \mathcal{V}_k     \left(  X \right)   
 \         .
\]

The sets   $  \mathcal{V}_k     \left(  X \right)   $   are rather inconvenient to work with since they are not closed  under 
 $  \mathrm{S}^{  \mathcal{M}  }  $   and    $  \mathrm{P}^{  \mathcal{M}  }  $. 
We will work with the  sets  $  \mathcal{W}_k     \left(  X \right)   $ defined as follows:  for  $  k  \in  \omega  $
\[
  \mathcal{W}_k     \left(  X \right)   =  
   \Big\{   \left(  \mathrm{S}^{  \mathcal{M}  }  \right)^m  w  :   \   
  m  \in   \mathbb{Z}  \,  ,    \     w  \in     \mathcal{V}_k     \left(  X \right)      \Big\}
\]
(If  $  m  <  0  $,  then    $  \left(  \mathrm{S}^{  \mathcal{M}  }  \right)^m  :=    \left(  \mathrm{P}^{  \mathcal{M}  }  \right)^{-m}  $.)

We  introduce a partial order  $  \prec   $  on  $ \mathbb{T}  $  with respect to which we will use  well-founded recursion. 
For  $  w,  v  \in  \mathbb{T} $
\[
v  \prec  w      \     \      \Leftrightarrow_{  \mathsf{def}  }        \       \ 
 \left(  \exists  r  \in  \mathbb{Z}  \right)   \left[   \    
 \left(  \mathrm{S}^{  \mathcal{M}  }  \right)^r  v  \in    \mathcal{V}     \left(  w   \right)    \setminus \lbrace w  \rbrace   \   \right] 
\        .
\]
We write  $  \preceq  $  for the reflexive closure of  $  \prec  $.
The next lemma says that the structures  $  \left(    \mathcal{W}_k     \left(  X \right)   ,  \prec   \right)   $  are well-founded.

\begin{lemma}  \label{SectionQWellFoundednessLemma}
Let   $ k \in  \omega  $  and let  $  X  \subseteq  \mathbb{T} $  be finite. 
Let  $ N  =  \vert     \mathcal{V}_k     \left(  X \right)   \vert  $. 
If  $  w_0  \prec  w_1  \prec     \ldots   \prec  w_ {\ell }  $  is an  increasing sequence in   $  \mathcal{W}_k     \left(  X \right)     $, 
then $  \ell  <  N  $.
\end{lemma}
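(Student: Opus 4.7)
The plan is to exploit the $\mathrm{S}^{\mathcal{M}}$-orbit structure of $\mathcal{W}_k(X)$. Every element of $\mathcal{W}_k(X)$ has the form $\left(\mathrm{S}^{\mathcal{M}}\right)^m w$ with $w \in \mathcal{V}_k(X)$, so the equivalence relation $w \sim w'$ defined by $w = \left(\mathrm{S}^{\mathcal{M}}\right)^t w'$ for some $t \in \mathbb{Z}$ partitions $\mathcal{W}_k(X)$ into at most $N$ classes, since every orbit meets $\mathcal{V}_k(X)$. I will show that the elements of any $\prec$-chain lie in pairwise distinct $\sim$-classes, which yields $\ell + 1 \leq N$ and hence $\ell < N$.

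The main tool is the tree-depth function $\mathsf{depth} : \mathbb{T} \to \omega$ defined by $\mathsf{depth}(\mathsf{d}^m_n) = 0$ and $\mathsf{depth}(\langle x, y \rangle^{\mathcal{B}}) = 1 + \max(\mathsf{depth}(x), \mathsf{depth}(y))$. This is well-defined because each non-atomic element of $\mathbb{T}$ admits a unique pair decomposition, by injectivity of $\langle \cdot, \cdot \rangle^{\mathcal{B}}$ and acyclicity. Two routine facts carry the argument. First, $\mathsf{depth}$ is invariant under every $\left(\mathrm{S}^{\mathcal{M}}\right)^t$ with $t \in \mathbb{Z}$: iterating $\mathrm{S}^{\mathcal{M}}$ on a pair only modifies the rightmost leaf, as the defining equation $\mathrm{S}^{\mathcal{M}}(\langle g_0, g_1 \rangle^{\mathcal{B}}) = \langle g_0, \mathrm{S}^{\mathcal{M}} g_1 \rangle^{\mathcal{B}}$ makes clear, and the statement follows by induction on $\mathsf{depth}$. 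Second, any proper element $u \in \mathcal{V}(w) \setminus \{w\}$ satisfies $\mathsf{depth}(u) < \mathsf{depth}(w)$, by induction on the construction of $\mathcal{V}(\cdot)$, since pair components have strictly smaller depth than the pair itself.

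Combining these, if $v \prec w$ then some $\left(\mathrm{S}^{\mathcal{M}}\right)^r v$ lies in $\mathcal{V}(w) \setminus \{w\}$, so
\[
\mathsf{depth}(v) \;=\; \mathsf{depth}\!\left(\left(\mathrm{S}^{\mathcal{M}}\right)^r v\right) \;<\; \mathsf{depth}(w).
\]
Hence $\mathsf{depth}$ is strictly increasing along any $\prec$-chain. Because $\sim$-equivalent elements share the same depth (by invariance of $\mathsf{depth}$ under the shifts), the chain's elements must lie in pairwise distinct $\sim$-classes, and the bound $\ell + 1 \leq N$ follows.

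The main obstacle is verifying the two stated properties of $\mathsf{depth}$, both of which go through by straightforward inductions from the recursive definitions of $\mathrm{S}^{\mathcal{M}}$ on $\mathbb{T}$ and of $\mathcal{V}(\cdot)$; well-definedness of $\mathsf{depth}$ itself rests on the acyclicity and injectivity properties of $\langle \cdot, \cdot \rangle^{\mathcal{B}}$. Once these are in hand, the lemma reduces to the orbit-counting bound outlined at the start.
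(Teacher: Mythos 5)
Your overall strategy — count the $\mathrm{S}^{\mathcal{M}}$-orbits and show the elements of a $\prec$-chain lie in pairwise distinct orbits — matches the paper's. But the mechanism you use to get strict increase along the chain, the tree-depth function $\mathsf{depth}$, is not well-defined on $\mathbb{T}$, and this is a genuine gap.

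The recursion $\mathsf{depth}(\langle x, y\rangle^{\mathcal{B}}) = 1 + \max(\mathsf{depth}(x), \mathsf{depth}(y))$ is defined by recursion on pair-components, so for it to terminate you need the component relation to be \emph{well-founded} on $\mathbb{T}$, not merely acyclic with unique decompositions. It is not. The set $\mathbb{T}$ is closed under the maps $\mathcal{R}_m$, and by clause (3) of the lemma constructing them, $\mathcal{R}_m(v) = \langle \mathcal{R}_{m-1}(v), v\rangle^{\mathcal{B}}$ for every $m \in \mathbb{Z}$. Hence any element $w = \mathcal{R}_m(v) \in \mathbb{T}$ sits atop an infinite strictly descending chain of left components $\mathcal{R}_m(v), \mathcal{R}_{m-1}(v), \mathcal{R}_{m-2}(v), \ldots$, all distinct because the $\mathcal{R}_j$ have pairwise disjoint images. (This is exactly what the $\aleph_0$-saturation of $\mathcal{B}$ and the type-realization defining $\mathcal{R}_0$ were for: $W_g$ consists of elements $w$ with $w = \langle w_1, g\rangle$, $w_1 = \langle w_2, g\rangle$, $w_2 = \langle w_3, g\rangle, \ldots$ ad infinitum.) Injectivity of $\langle\cdot,\cdot\rangle^{\mathcal{B}}$ gives you uniqueness of decomposition and acyclicity forbids fixed points, but neither rules out this infinite descent, so $\mathsf{depth}$ has no value at $\mathcal{R}_m(v)$. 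Consequently your second "routine fact" — that every $u \in \mathcal{V}(w) \setminus \{w\}$ has strictly smaller depth — also breaks down: $\mathcal{R}_{m-1}(v) \in \mathcal{V}(\mathcal{R}_m(v)) \setminus \{\mathcal{R}_m(v)\}$, yet there is no finite depth to compare.

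The paper sidesteps this by never assigning a depth. It instead proves the claim $w \not\prec (\mathrm{S}^{\mathcal{M}})^m w$ directly, by induction not on component-depth but on the \emph{generation rank} in the inductive definition of $\mathbb{T}$ (the stage $n$ with $w \in \mathbb{T}_n$), which \emph{is} a well-founded induction parameter. The case $w = \mathcal{R}_q(v)$ is then handled as its own base-of-recursion case with $v$ of strictly smaller generation rank, and acyclicity of $\langle\cdot,\cdot\rangle^{\mathcal{B}}$ is invoked pointwise to rule out the offending membership rather than to ground a global well-founded rank. If you want to repair your proof, you would need to replace $\mathsf{depth}$ with an induction along the generation rank of $\mathbb{T}$, or otherwise prove the paper's claim, since no well-defined $\mathrm{S}^{\mathcal{M}}$-invariant and $\prec$-monotone ordinal rank exists on the non-well-founded parts of $\mathbb{T}$ via naive component recursion.
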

\begin{proof}

We need the following claim. 

  \begin{quote} {\bf (Claim)} \;\;\;\;\;\;    
 $  w  \not\prec      \left(  \mathrm{S}^{  \mathcal{M} }  \right)^m  w   $
 for all  $  w  \in  \mathbb{T}  $  and all   $  m  \in  \mathbb{Z} $. 
 \end{quote}  

We prove the claim by induction on the  complexity of elements of  $  \mathbb{T} $
(recall that   $  \mathbb{T} $  is inductively defined). 
We have the following cases: 
\begin{enumerate}
\item   $  w  \in  \lbrace   \mathsf{d}^m_n  :   \   (n, m )  \in  \omega  \times   \mathbb{Z}  \,  \rbrace $; 

\item     $  w  =  \langle  w_0,  w_1  \rangle^{  \mathcal{B} }  $  and the claim holds for  $  w_0$  and $ w_0  $;

\item    $  w  =  \mathcal{R}_q  \left(  v  \right)  $   where   $  q  \in  \mathbb{Z}   $  and the claim holds for  $  v $. 
\end{enumerate}
Case (1) is clear since  $  \mathcal{V} \left(  \mathsf{d}^m_n  \right)  =  \lbrace    \mathsf{d}^m_n    \rbrace   \,   $. 
 We consider   case  (2):    $  w  =  \langle  w_0,  w_1  \rangle^{  \mathcal{B} }  $  and the claim holds for  $  w_0$  and $ w_0  $. 
 Assume for the sake of a contradiction there exists  $  m  \in  \mathbb{Z}  $  such that     $  w  \prec      \left(  \mathrm{S}^{  \mathcal{M} }  \right)^m  w   $.
   By how    $  \mathrm{S}^{  \mathcal{M} }  $  is defined 
 \[
  \left(  \mathrm{S}^{  \mathcal{M} }  \right)^m  w
  = 
    \langle  w_0   \,  ,   \,     \left(  \mathrm{S}^{  \mathcal{M} }  \right)^m  w_1  \rangle^{  \mathcal{B} }  
    \         .
 \]
 Since   $  w  \prec      \left(  \mathrm{S}^{  \mathcal{M} }  \right)^m  w   $,    
 there exists  $  r  \in  \mathbb{Z}  $    such that  
\begin{align*}
  \left(  \mathrm{S}^{  \mathcal{M} }  \right)^r  w   \in     \mathcal{V}  \left(   \left(  \mathrm{S}^{  \mathcal{M} }  \right)^m  w    \right)   \setminus  \lbrace  w  \rbrace 
  & =  
 \mathcal{V}  \left(   w_0     \right)   \cup    \mathcal{V}  \left(   \left(  \mathrm{S}^{  \mathcal{M} }  \right)^m  w_1    \right) 
\     .
\end{align*}
We cannot have   $    \langle  w_0   \,  ,   \,     \left(  \mathrm{S}^{  \mathcal{M} }  \right)^r  w_1  \rangle^{  \mathcal{B} }   
   \in    \mathcal{V} \left(   w_0     \right)  $
since   $  \langle  \cdot  ,  \cdot  \rangle^{  \mathcal{B} }  $  is acyclic. 
Hence      
\[
     \langle  w_0   \,  ,   \,     \left(  \mathrm{S}^{  \mathcal{M} }  \right)^r  w_1  \rangle^{  \mathcal{B} }  
   =  
      \left(  \mathrm{S}^{  \mathcal{M} }  \right)^r  w   
    \in 
         \mathcal{V}  \left(   \left(  \mathrm{S}^{  \mathcal{M} }  \right)^m  w_1    \right)   
    \        .
    \]
 Sine     $  \langle  \cdot  ,  \cdot  \rangle^{  \mathcal{B} }  $  is acyclic.    from this we get  
\[
  \left(  \mathrm{S}^{    \mathcal{M} }  \right)^r    w_1  \in   \mathcal{V}  \left(   \left(  \mathrm{S}^{  \mathcal{M} }  \right)^m  w_1    \right) 
  \setminus  \lbrace  \left(  \mathrm{S}^{  \mathcal{M} }  \right)^m  w_1    \rbrace    
  \     .
  \]
But this means   $  w_1    \prec   \left(  \mathrm{S}^{  \mathcal{M} }  \right)^m  w_1   $, 
which contradicts the induction hypothesis. 
Thus,    $  w    \not\prec      \left(  \mathrm{S}^{  \mathcal{M} }  \right)^m  w   $  for all  $  m  \in  \mathbb{Z}  $.

 We consider   case  (3):     $  w  =  \mathcal{R}_q  \left(  v  \right)  $   where   $  q  \in  \mathbb{Z}   $  and the claim holds for  $  v $. 
 Assume for the sake of a contradiction there exists  $  m  \in  \mathbb{Z}  $  such that     $  w  \prec      \left(  \mathrm{S}^{  \mathcal{M} }  \right)^m  w   $.
 By how  $  \mathcal{R}_q  $  and    $  \mathrm{S}^{  \mathcal{M} }  $     are   defined 
 \[
  \left(  \mathrm{S}^{  \mathcal{M} }  \right)^m  w
  = 
    \langle    \mathcal{R}_{q-1}  \left(  v  \right)    \,  ,   \,     \left(  \mathrm{S}^{  \mathcal{M} }  \right)^m  v  \rangle^{  \mathcal{B} }  
    \         .
 \] 
  Since   $  w  \prec      \left(  \mathrm{S}^{  \mathcal{M} }  \right)^m  w   $,    there exists    $  r  \in  \mathbb{Z}  $  
    such that  
\[
 \left(  \mathrm{S}^{  \mathcal{M} }  \right)^r  w  \in     \mathcal{V}  \left(   \left(  \mathrm{S}^{  \mathcal{M} }  \right)^m  w    \right)   \setminus  \lbrace   \left(    \mathrm{S}^{  \mathcal{M} }  \right)^m  w   \rbrace 
   =  
 \mathcal{V}  \left(   \mathcal{R}_{q-1}  \left(  v  \right)       \right)   \cup    \mathcal{V}  \left(   \left(  \mathrm{S}^{  \mathcal{M} }  \right)^m  v    \right) 
\      .
\]
We cannot have   
$      \langle    \mathcal{R}_{q-1}  \left(  v  \right)    \,  ,   \,     \left(  \mathrm{S}^{  \mathcal{M} }  \right)^m  v  \rangle^{  \mathcal{B} }  
  \in 
    \mathcal{V}  \left(   \mathcal{R}_{q-1}  \left(  v  \right)       \right)   $
since    $  \langle  \cdot  ,  \cdot  \rangle^{  \mathcal{B}  }  $  is  acyclic. 
Hence 
\[  
    \langle    \mathcal{R}_{q-1}  \left(  v  \right)    \,  ,   \,     \left(  \mathrm{S}^{  \mathcal{M} }  \right)^r  v  \rangle^{  \mathcal{B} }  
=
  \left(  \mathrm{S}^{  \mathcal{M} }  \right)^r  w 
   \in 
          \mathcal{V}  \left(   \left(  \mathrm{S}^{  \mathcal{M} }  \right)^m  v    \right)   
  \       .
  \]
From this we get 
\[
   \left(  \mathrm{S}^{  \mathcal{M} }  \right)^r  v  \in  \mathcal{V}   \left(   \left(  \mathrm{S}^{  \mathcal{M} }  \right)^m  v    \right)  
\setminus   \lbrace   \left(  \mathrm{S}^{  \mathcal{M} }  \right)^m  v    \rbrace   
\     .
\]
But then  $  v  \prec   \left(  \mathrm{S}^{  \mathcal{M} }  \right)^m  v    $,  
which contradicts  the induction hypothesis. 
Thus,    $  w    \not\prec      \left(  \mathrm{S}^{  \mathcal{M} }  \right)^m  w   $  for all  $  m  \in  \mathbb{Z}  $.

Thus, by induction, the claim holds.

We complete the proof of the lemma. 
Assume for the sake of a contradiction there exists    an    increasing sequence 
 $  w_0  \prec  w_1  \prec     \ldots   \prec  w_{ \ell }   $   in   $  \mathcal{W}_k     \left(  X \right)     $
 where  $  \ell  \geq   N  $. 
 By how  $  \mathcal{W}_k     \left(  X \right)     $  is defined, 
 we have a sequence  $  v_0, v_1,   \ldots , v_{ \ell }     $  in   $  \mathcal{V}_k     \left(  X \right)     $
and a sequence  $  m_0, m_1,  \ldots , m_{ \ell }    $  in  $  \mathbb{Z} $  such that 
$  w_i =  \left(    \mathrm{S}^{  \mathcal{M} }  \right)^{  m_i }   v_i  $  for all $  i  \in  \lbrace 0, 1,  \ldots  ,  \ell \rbrace $.
Since  $  \ell   \geq      \vert     \mathcal{V}_k     \left(  X \right)   \vert  $,    there must  exist  two distinct  indexes  
$ p, q   \in      \lbrace 0, 1,  \ldots  ,  \ell \rbrace   $  such that  $  v_p   = v_q  $. 
Without loss of generality, we may assume  $  p <  q  $. 
Since  $  w_p  \prec  w_q  $,  we get that  
\[
   \left(    \mathrm{S}^{  \mathcal{M} }  \right)^{  m_p }   v_p    \prec     \left(    \mathrm{S}^{  \mathcal{M} }  \right)^{  m_q }   v_q    
   = 
      \left(    \mathrm{S}^{  \mathcal{M} }  \right)^{  m_q }   v_p
 \]  
 which contradicts the  claim. 
 This completes the proof of the lemma. 
\end{proof}

To make the proof of the extension lemma more transparent,  
we introduce a notion of    the $n$-th closure  in  $  \mathbb{T} $  of a set   $  Y  \subseteq  \mathbb{T}  $. 
Let   $  Y  \subseteq  \mathbb{T}  $. 
For  $  n  \in  \omega  $,  we define the set $  \mathsf{Cl}_n  \left(   Y  \right)   $  by recursion: 
$  \mathsf{Cl}_0  \left(   Y  \right)  =  Y    $  and  
$  \mathsf{Cl}_{n+1}   \left(   Y  \right)  =  \mathsf{Cl}_n  \left(   Y  \right)    \cup   W_n  $  where  
\begin{multline*}
W_n  =  
\big\{  \langle w,  v  \rangle^{  \mathcal{B}  }   :   \   w,  v   \in     \mathsf{Cl}_n  \left(   Y  \right)     \   \big\} 
\cup  
\bigcup_{  m  \in   \mathbb{Z}  }  
 \big\{    \mathcal{R}_m  w   :   \   w     \in  \mathsf{Cl}_n  \left(   Y  \right)     \   \big\} 
 \;   \cup   \;  
 \\
\bigcup_{  m  \in   \omega  \setminus  \lbrace 0, 1  \rbrace  }  
 \big\{    \mathcal{A}_m      :   \   w     \in   \mathsf{Cl}_n  \left(   Y  \right)     \   \big\} 
\          .
\end{multline*}

Let   $  L_{  \mathbb{T} }^-   :=     L_{  \mathbb{T} }    \setminus  \lbrace  \mathrm{S} \rbrace    \,   $. 
Since   $  \mathsf{Cl}_{n}  \left(  \mathcal{W}_{k}  \left( X  \right)   \right)     $  is not necessarily closed under  
$  \mathrm{S}^{  \mathcal{M}  }  $, 
we  view it as  an     $  L_{  \mathbb{T} }^-  $-structure.

\begin{lemma}   \label{SectionQNthClureLemma}
Let  $X \subseteq   \mathbb{T} $  and   $  k, N  \in  \omega $. 
Assume    $  F :  \mathcal{W}_{ k + N  }  \left(  X  \right)   \to   \mathbb{T}    $  is  a  \textsf{Good}  $  L_{  \mathbb{T} }^- $-embedding.
Then,  there  exists  a   one-to-one  $  L_{  \mathbb{T} }^-$-homomorphism   
 $  L:   \mathsf{Cl}_{N}  \left(  \mathcal{W}_{k}  \left( X  ,  \mathsf{d}^0_0 \right)   \right)   \to   \mathbb{T}   $
that agrees with  $  F  $  on     $  \mathcal{W}_{ k   }  \left(  X  \right)    $
and  fixes    $  \big\{   \mathsf{d}^m_0  :   \    m  \in  \mathbb{Z}   \,   \big\}     \,    $.
\end{lemma}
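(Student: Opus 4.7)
The plan is to prove the lemma by induction on $N$, keeping $k$ and $X$ arbitrary throughout.

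For the base case $N = 0$, the domain $\mathsf{Cl}_0(\mathcal{W}_k(X, \mathsf{d}^0_0))$ unfolds to $\mathcal{W}_k(X) \cup \{\mathsf{d}^m_0 : m \in \mathbb{Z}\}$. I will set $L$ equal to $F$ on $\mathcal{W}_k(X)$ and equal to the identity on $\{\mathsf{d}^m_0 : m \in \mathbb{Z}\}$. The contrapositive of clause (1) of \textsf{Good} forces $F(w) = w$ whenever $w$ equals some $\mathsf{d}^m_0$, so the two pieces match on their intersection, and the same clause prevents $F$ from moving anything outside $\{\mathsf{d}^m_0\}$ into $\{\mathsf{d}^m_0\}$, which preserves injectivity of $L$. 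The $L_{\mathbb{T}}^-$-homomorphism property on the identity part is vacuous since the constants $\mathsf{d}^m_0$ lie outside the images of $\langle\cdot,\cdot\rangle^{\mathcal{B}}$, $\mathcal{R}_m$, and $\mathcal{A}_n$.

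For the inductive step $N \to N+1$, the restriction $F \upharpoonright \mathcal{W}_{k+N}(X)$ is still \textsf{Good}, so the inductive hypothesis produces a map $L_N$ on $\mathsf{Cl}_N(\mathcal{W}_k(X, \mathsf{d}^0_0))$. I will extend $L_N$ to $\mathsf{Cl}_{N+1} = \mathsf{Cl}_N \cup W_N$ by forcing commutation with the generating operations: for each $u \in W_N \setminus \mathsf{Cl}_N$, take any presentation of $u$ as $\langle w, v\rangle^{\mathcal{B}}$, $\mathcal{R}_m w$, or $\mathcal{A}_n w$ with arguments in $\mathsf{Cl}_N$, and declare $L(u)$ to be the same expression with $L_N$ applied to the arguments. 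Three verifications are then needed. Well-definedness: if $u$ admits more than one such presentation, the candidate values of $L(u)$ must coincide; this reduces, via injectivity of $\langle\cdot,\cdot\rangle^{\mathcal{B}}$ together with the pairwise-disjointness of the images of the $\mathcal{R}_m$'s and $\mathcal{A}_n$'s from the preceding lemma, to the assertion that $L_N$ already commutes with $\mathcal{R}_{m-1}$, $\mathcal{A}_{n-1}$, and $\langle\cdot,\cdot\rangle^{\mathcal{B}}$, which is precisely the homomorphism property delivered by the inductive hypothesis. The homomorphism property of $L$ on $\mathsf{Cl}_{N+1}$ is then built into the definition on $W_N \setminus \mathsf{Cl}_N$ and inherited from $L_N$ on $\mathsf{Cl}_N$. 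Finally injectivity: if $L(u_1) = L(u_2)$, the inductive hypothesis handles the subcase where both $u_i$ lie in $\mathsf{Cl}_N$; otherwise, say $u_2 \in W_N \setminus \mathsf{Cl}_N$, so $L(u_2)$ lies in the image of $\langle\cdot,\cdot\rangle^{\mathcal{B}}$, and I use the unique factorization of pairs in $\mathbb{T}$, clause (1) of \textsf{Good} to rule out $u_1$ being a $\mathsf{d}^m_0$, and the homomorphism property of $F$ on the larger set $\mathcal{W}_{k+N+1}(X)$ to pull back the factorization of $L(u_2)$ through $F$ and conclude $u_1 = u_2$.

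The main obstacle will be the injectivity check in its mixed case: a newly constructed value $L(u_2) = \langle L_N(w), L_N(v)\rangle^{\mathcal{B}}$ could, a priori, coincide with $L_N(u_1) = F(u_1)$ for some $u_1 \in \mathcal{W}_k(X)$ whose decomposition in $\mathbb{T}$ sends its components outside $\mathsf{Cl}_N$. The leverage that resolves this is exactly that $F$ is \textsf{Good} on the bigger domain $\mathcal{W}_{k+N+1}(X)$, which contains every subterm of any element of $\mathcal{W}_k(X)$ reached within $N+1$ pairing levels; this provides $F$-values for the pair-components of $u_1$, and injectivity of $F$ combined with that of $L_N$ then forces $u_1 = u_2$. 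The slack between the assumed domain of $F$ and the smaller set $\mathcal{W}_k(X)$ on which agreement is demanded is precisely what makes the induction close.
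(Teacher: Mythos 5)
Your proposal takes a genuinely different route from the paper: you induct on $N$ and build $L$ in stages by applying the inductive hypothesis to $F \upharpoonright \mathcal{W}_{k+N}(X)$ to obtain $L_N$ and then extending by forcing commutation, whereas the paper defines $L$ all at once by recursion on a rank function inside $\mathsf{Cl}_N\left(\mathcal{W}_k\left(X, \mathsf{d}^0_0\right)\right)$ and then separately proves a key claim by recursion on that rank. Your base case and the well-definedness and homomorphism verifications for the extension are reasonable, modulo a small imprecision: the homomorphism property on $\mathsf{Cl}_0$ is not quite ``vacuous'' on the $\mathsf{d}^m_0$ part, since a pair in $\mathcal{W}_k(X)$ can have one component in $\big\{\mathsf{d}^m_0 : m \in \mathbb{Z}\big\}$; this works out because $F$ is \textsf{Good}, but it is not vacuous.

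The genuine gap is in the injectivity check for the mixed case. You write that ``injectivity of $F$ combined with that of $L_N$ then forces $u_1 = u_2$,'' but this is not a one-step application of either injectivity. Unpacking $L(u_2) = \langle L_N(w), L_N(v)\rangle^{\mathcal{B}}$ against $F(u_1) = \langle F(u_1^0), F(u_1^1)\rangle^{\mathcal{B}}$ (with $u_1^0, u_1^1 \in \mathcal{W}_{k+1}(X)$) gives $F(u_1^0) = L_N(w)$; but if $w$ has positive $\mathsf{Cl}$-rank, then $L_N(w)$ is not an $F$-value of anything you control, so neither injectivity applies directly. To close the argument you must recursively descend the structure of $w$ (via the homomorphism property of $L_N$) in lock-step with the decomposition of $u_1$ inside $F$'s domain, terminating only when both sides reach rank-$0$ elements of $\mathcal{W}_k(X) \cup \big\{\mathsf{d}^m_0 : m \in \mathbb{Z}\big\}$, where $L_N$ and $F$ agree. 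That descent is precisely the content of the paper's key claim, which asserts that $L(v) = F(u)$ implies $v = u$ whenever $v \in \mathsf{Cl}_N$ has rank $j$ and $u \in \mathcal{W}_{k+N-j}(X)$. Your inductive hypothesis guarantees only that $L_N$ agrees with $F$ on the small set $\mathcal{W}_k(X)$; it says nothing about how $L_N$ relates to $F$ at the intermediate depths that arise in this descent. You would need to establish the analogue of the paper's claim for $L_N$ as an auxiliary lemma within the inductive step (or strengthen your induction hypothesis to carry it along), and your proposal does not indicate awareness that this additional argument is required.
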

\begin{proof}

Observe that 
\[
\mathcal{W}_{k}  \left( X  ,  \mathsf{d}^0_0 \right)    
= 
  \mathcal{W}_{k}  \left( X   \right)    \cup     \mathcal{W}_{k}  \left(  \mathsf{d}^0_0   \right)  
= 
  \mathcal{W}_{k}  \left( X   \right)  
\cup   \big\{   \mathsf{d}^m_0  :   \    m  \in  \mathbb{Z}   \,   \big\}
\        .
\]
First,  we  introduce a rank function on   $ \mathsf{Cl}_{N}  \left(  \mathcal{W}_{k}  \left( X ,  \mathsf{d}^0_0   \right)   \right)    $
\[
\mathsf{rk}  \left(  w  \right)   =  \min  
\big\{  m  \in  \lbrace 0, 1,  \ldots ,  N  \rbrace  :   \  
 w  \in  \mathsf{Cl}_{m}  \left(  \mathcal{W}_{k}  \left( X ,   \mathsf{d}^0_0    \right)   \right)     \big\}  
\     .
\]
We define  $L$ by recursion:   for  $  w  \in      \mathsf{Cl}_{N}  \left(  \mathcal{W}_{k}  \left( X  ,   \mathsf{d}^0_0    \right)   \right)     $
\[
L  \left(  w  \right)   =   \begin{cases}
w            &   \mbox{  if   }     w  \in    \big\{   \mathsf{d}^m_0  :   \    m  \in  \mathbb{Z}   \,   \big\} 
\\
F ( w)      &     \mbox{  if   }   w  \in    \mathcal{V}_{ k  }  \left(  X  \right) 
\\
\langle  L(w_0),   L(w_0 )   \rangle^{   \mathcal{M}  }      &    \mbox{  if  }  w  \not\in   \mathcal{V}_{ k  }  \left(  X  \right)   
                     \mbox{  and   }    w  =  \langle  w_0,  w_1    \rangle^{   \mathcal{M}  }  
\\
\mathcal{R}_m  \left(   L(v)  \right)      &    \mbox{  if  }  w  \not\in   \mathcal{V}_{ k  }  \left(  X  \right)   
                     \mbox{  and   }      w  =       \mathcal{R}_m  \left(   v \right)    
\\
\mathcal{A}_n  \left(   L(v)  \right)      &    \mbox{  if  }  w  \not\in   \mathcal{V}_{ k  }  \left(  X  \right)   
                     \mbox{  and   }      w  =       \mathcal{A}_n  \left(   v \right)                                   
\end{cases}
\]
where   $   w_0,  w_1,  v   \in   \mathsf{Cl}_{N}  \left(  \mathcal{W}_{k}  \left( X   ,  \mathsf{d}^0_0   \right)   \right)     $, 
$  m  \in  \mathbb{Z}  $  and  $  n \in  \omega  \setminus  \lbrace 0, 1  \rbrace  $. 
This map is well-defined since: 
(i)   $   \mathcal{R}_m  \left(   v \right)    =  \langle    \mathcal{R}_{m-1}  \left(   v \right)     \,  ,   \,  v  \rangle^{  \mathcal{B} }  $  
and  
 $   \mathcal{A}_v  \left(   v \right)    =  \langle    \mathcal{A}_{n-1}  \left(   v \right)     \,  ,   \,  v  \rangle^{  \mathcal{B} }  $; 
 (ii) distinct maps in    $   \lbrace  \mathcal{R}_m   :  \   m  \in  \mathbb{Z} \,   \rbrace   \cup 
 \lbrace  \mathcal{A}_n :  \   n  \in    \omega  \setminus \lbrace 0, 1   \rbrace  \,   \rbrace       $
 have disjoint images; 
 (iii)  $F$ is  \textsf{Good}.
Recall that    $  \mathcal{A}_1  $  is  the identity map.

To show that $L$ is one-to-one,  we need the following claim.

   \begin{quote} {\bf (Claim)} \;\;\;\;\;\;    
   Let    $   v  \in    \mathsf{Cl}_{N}  \left(  \mathcal{W}_{k}  \left( X  \right)     ,  \mathsf{d}^0_0     \right)   $     and 
   $  w  \in  \mathcal{W}_{k + N- \mathsf{rk}  \left( v  \right)  }  \left( X  \right)    $
   be distinct. 
Then,    $ L(v)   \neq   F(w) $.
 \end{quote}  
 
 Pick   $   v  \in    \mathsf{Cl}_{N}  \left(  \mathcal{W}_{k}  \left( X    ,  \mathsf{d}^0_0    \right)   \right)   $. 
 We prove  by induction on  $  \mathsf{rk}   \left(  v  \right)  $  that  
 $  L(v)  \neq  F(w)  $  for all   $  w    \in \mathcal{W}_{k + N- \mathsf{rk}  \left( v  \right)  }  \left( X  \right)   \setminus  \lbrace  v  \rbrace  $. 
 We consider the base case   $   \mathsf{rk}_N  \left(  v  \right)  = 0  $. 
 We have two cases:   
 (1)  $  v    \in  \mathcal{W}_{k}  \left( X  \right)     $; 
 (2)  $  v  =  \mathsf{d}^m_0  $  where  $  m  \in  \mathbb{Z}  $. 
 In case of  (1),  since   $L$ and $F$ agree on   $   \mathcal{W}_{k}  \left( X  \right)     $ and  $F$  is one-to-one, 
  $  L(v)   =  F(v)   \neq    F(w)  $    for all   $  w    \in \mathcal{W}_{k + N }  \left( X  \right)   \setminus  \lbrace  v  \rbrace  $. 
   We consider (2):   $  v  =  \mathsf{d}^m_0     \,  $.
   We have  $  L(\mathsf{d}^m_0 )  =  \mathsf{d}^m_0   $. 
   Pick    $  w    \in \mathcal{W}_{k + N }  \left( X  \right)   \setminus  \lbrace  v  \rbrace  $. 
   Assume for the sake of a contradiction  $ \mathsf{d}^m_0   =  F(w)  $. 
   Since  $  w  \neq  \mathsf{d}^m_0   =  F(w)  $  and  $  F $  is  \textsf{Good},  
   we must have  $  w,  F(w)  \not\in    \big\{   \mathsf{d}^m_0  :   \    m  \in  \mathbb{Z}   \,   \big\}  $, 
   which contradicts the assumption that    $  \mathsf{d}^m_0   =  F(w)  $. 
   Thus,     $  L(v)   \neq    F(w)  $    for all   $  w    \in \mathcal{W}_{k + N }  \left( X  \right)   \setminus  \lbrace  v  \rbrace  $.

We consider the case      $   \mathsf{rk}  \left(  v  \right)   >  0  $. 
We   have the following cases: 
 \begin{enumerate}
\item  $  v  =  \langle  s,  t  \rangle^{  \mathcal{B}  }  $  where  
$  s,  t  \in    \ \mathsf{Cl}_{N}  \left(  \mathcal{W}_{k}  \left( X   ,   \mathsf{d}^0_0    \right)   \right)   $
and  $  \mathsf{rk} \left(  v  \right)  =  1  +  \max\left(   \mathsf{rk} \left(  s \right)  \,  ,  \,    \mathsf{rk} \left(  t  \right)   \right)    \,   $.

\item  $  v  =   \mathcal{R}_m \left(  s  \right)   $  where  
$  s  \in     \mathsf{Cl}_{N}  \left(  \mathcal{W}_{k}  \left( X    ,   \mathsf{d}^0_0    \right)   \right)   $, 
  $  m  \in  \mathbb{Z}  $
  and  $  \mathsf{rk} \left(  v  \right)  =  1  +  \mathsf{rk} \left(  s \right)      \,   $.

\item  $  v  =   \mathcal{A}_n \left(  s  \right)   $  where  
$  s  \in    \mathsf{Cl}_{N}  \left(  \mathcal{W}_{k}  \left( X     ,   \mathsf{d}^0_0      \right)   \right)    $, 
  $ n  \in  \omega  \setminus  \lbrace 0, 1  \rbrace   $
  and  $  \mathsf{rk} \left(  v  \right)  =  1  +  \mathsf{rk} \left(  s \right)      \,   $. 
 \end{enumerate}
Pick     $  w  \in  \mathcal{W}_{k + N- \mathsf{rk} \left( v  \right)  }  \left( X  \right)    $
such that  $  L(v)  =  F(w)  $.  
We need to show that  $  v  =  w  $. 
We consider  case  (1). 
Since  $L$ is an $L_{  \mathbb{T} }^-  $-homomorphism  and  $F$  is  an   $L_{  \mathbb{T} }^-  $-embedding
\[
\langle  L(s), L(t)   \rangle^{  \mathcal{B}  } 
= L(v)  = F(w)   \in  
 \mathcal{W}_{k + N- \mathsf{rk} \left(   v \right)  }  \left(   F  \left([ X  \right]   \right)   
 \     . 
\]
Since  $   k + N- \mathsf{rk}  \left(   v \right)  +1  \leq  k+N $  as  $   \mathsf{rk}  \left(   v \right)   >  0  $
\[
  L(s), L(t)  \in       
   \mathcal{W}_{ k + N  - \mathsf{rk}  \left(   v \right)  +1  }  \left(     F  \left[  X   \right]   \right)  
  \subseteq   \mathcal{W}_{ k + N  }  \left(     F  \left[  X   \right]   \right)  
  \      .
  \]
Since  $  F$ is an  $L_{  \mathbb{T} }^-  $-embedding, this means that we also have  
\begin{multline*}
 w_0    \in     \mathcal{W}_{ k + N  - \mathsf{rk}  \left(   v \right)  +1  }  \left(     F  \left[  X   \right]   \right)  
\subseteq  
   \mathcal{W}_{ k + N  - \mathsf{rk}  \left(   s \right)   }  \left(     F  \left[  X   \right]   \right)  
\     \mbox{  and   }      \   
\\
 w_1    \in     \mathcal{W}_{ k + N  - \mathsf{rk}  \left(   v \right)  +1  }  \left(     F  \left[  X   \right]   \right)  
\subseteq  
   \mathcal{W}_{ k + N  - \mathsf{rk}  \left(   t \right)   }  \left(     F  \left[  X   \right]   \right)  
 \mathcal{W}_{k + N }  \left( X  \right)    
   \end{multline*}
such that   
\[
   w  =  \langle  w_0, w_1  \rangle^{  \mathcal{B}  }  
\    \mbox{  and   }          \   
\langle  F(w_0) ,   F(w_1)  \rangle^{  \mathcal{B}  }  
= F(w)  =  L(v)  
= 
\langle  L(s), L(t)   \rangle^{  \mathcal{B}  } 
\     .
\]
Since  $  \langle  \cdot  ,  \cdot   \rangle^{  \mathcal{B}  }   $ is one-to-one,  from this  we get 
\[
 F(w_0)  =  L(s)      \    \mbox{  and  }      \         F(w_1)  = L(t)  
 \       .
 \]
By the induction hypothesis,   $  w =  s  $  and  $  w_1  =  t  $,  
and thus  
\[
  w  =  \langle  w_0,  w_1  \rangle^{  \mathcal{B} }   =     \langle  s,  t \rangle^{  \mathcal{B} }     =  v  
  \     .
  \]

We consider  cases    (2)  and  (3)  together since they are very similar. 
Let   
\[
\mathcal{G} :=   \begin{cases}
\mathcal{R}_m      &    \mbox{ in case of  }  (2) 
\\
\mathcal{A}_n      &    \mbox{ in case of  }  (3) 
\end{cases}
\]
Recall that    $  \mathcal{G}   $  is one-to-one. 
Since  $L$ is an $L_{  \mathbb{T} }^-  $-homomorphism  and  $F$  is  an   $L_{  \mathbb{T} }^-  $-embedding
\[
\mathcal{G}  \left(   L(s)  \right)     =   
 L(v)  = F(w)   \in  
 \mathcal{W}_{k + N- \mathsf{rk} \left(   v \right)  }  \left(   F  \left([ X  \right]   \right)   
 \     . 
\]
Since  $   k + N- \mathsf{rk}  \left(   v \right)  +1  \leq  k+N $  as  $   \mathsf{rk}  \left(   v \right)   >  0  $
\[
  L(s)  \in  
   \mathcal{W}_{k + N- \mathsf{rk} \left(   s \right)  }  \left(   F  \left([ X  \right]   \right)   
  \subseteq     \mathcal{W}_{ k + N  }  \left(     F  \left[  X   \right]   \right)  
  \      .
  \]
Since  $  F$ is an  $L_{  \mathbb{T} }^-  $-embedding, this means that we also have  
\[
u     \in   \mathcal{W}_{k + N -  \mathsf{rk} \left(   s \right)  }  \left( X  \right)    
 \   \mbox{ such that  }    \   
 w  =   \mathcal{G}   \left(  u  \right) 
   \      .
   \]
and   
\[
\mathcal{G}  \left(   F(u)  \right)     =   
 F(w)  =  L(v)    = 
\mathcal{G}  \left(   L(s)  \right)     
\         .
\]
Since  $   \mathcal{G} $ is one-to-one, $  F(u)  =  L(s)  $. 
By the induction hypothesis,   $u =  s $,     and thus  
\[
  w  =    \mathcal{G} \left(  u   \right)     =       \mathcal{G}   \left(  s   \right)     =   v  
  \     .
  \]

Thus,  by induction,   the claim holds.

 We use the claim to prove the lemma. 
Pick    $  w,  v    \in   \mathsf{Cl}_{N}  \left(  \mathcal{W}_{k}  \left( X ,  \mathsf{d}^0_0  \right)   \right) $  such that  $  L(w)  =  L(v )  $. 
We prove by induction on  $  \mathsf{rk}  \left( w  \right)  +   \mathsf{rk}  \left( v  \right)   $  that  $  w  = v $. 
First,  we  consider the case  $  \min  \left(   \mathsf{rk}  \left( w  \right)     \,  ,  \,    \mathsf{rk}  \left( v  \right)     \right)  =  0   \,    $.
If    one of  $  w,  v  $  is an element of  $  \big\{  \mathsf{d}^r_0  :   \  r  \in  \mathbb{Z}  \,  \big\}  $, 
then the other must be an element of  $  \mathcal{W}_{k}  \left( X ,  \mathsf{d}^0_0  \right)   $,   
and we get  that     $  w  = v  $   since   $F$ is   \textsf{Good}. 
If  one of     $  w,  v  $   is   an element  of    $  \mathcal{W}_{k}  \left( X  \right)   $,  
then  $   w   =   v  $  by the claim  since  $F$ and $L$ agree on   $  \mathcal{W}_{k}  \left( X  \right)   $.

We consider the case    $   \mathsf{rk}  \left( w  \right)    > 0  $  and     $    \mathsf{rk}  \left( v  \right)    >  0  $. 
We    have the following cases
\begin{enumerate}
\item    $  w  =  \langle  w_0 ,  w_1  \rangle^{  \mathcal{B}  }  $  and  $  v =  \langle  v_0, v_1  \rangle^{  \mathcal{B}  }  $
where  $ w_0, w_1,  v_0, v_1  \in     \mathsf{Cl}_{N}  \left(  \mathcal{W}_{k}  \left( X  \right)   \right)     $, 
 $  \mathsf{rk} \left(  w  \right)  =  1  +  \max\left(   \mathsf{rk} \left(  w_0 \right)  \,  ,  \,    \mathsf{rk} \left(  w_1  \right)   \right)     $ 
and  $  \mathsf{rk} \left(  v  \right)  =  1  +  \max\left(   \mathsf{rk} \left(  v_0 \right)  \,  ,  \,    \mathsf{rk} \left(  v_1  \right)   \right)    \,   $.

\item     $  w  =    \mathcal{G}  \left(  g  \right)  $  and  $  v =  \langle v_0, v_1  \rangle^{  \mathcal{B}  }  $
where  $ g,  v_0, v_1  \in     \mathsf{Cl}_{N}  \left(  \mathcal{W}_{k}  \left( X  \right)   \right)     $
  $  \mathcal{G}     \in  \lbrace  \mathcal{R}_m   :  \   m  \in  \mathbb{Z} \,   \rbrace   \cup 
 \lbrace  \mathcal{A}_n :  \   n  \in    \omega  \setminus \lbrace 0, 1 ,  2   \rbrace  \,   \rbrace      $, 
  $  \mathsf{rk} \left(  w  \right)  =  1  +  \mathsf{rk} \left(  g \right)    $
and  $  \mathsf{rk} \left(  v  \right)  =  1  +  \max\left(   \mathsf{rk} \left(  v_0 \right)  \,  ,  \,    \mathsf{rk} \left(  v_1  \right)   \right)    \,   $.

\item   $  w  =    \mathcal{G}  \left(  g   \right)  $  and  $  v =  \mathcal{H}  \left(  u \right)   $
where  $ u,  g  \in     \mathsf{Cl}_{N}  \left(  \mathcal{W}_{k}  \left( X  \right)   \right)     $
and  $  \mathcal{G} ,   \mathcal{H}      \in  \lbrace  \mathcal{R}_m   :  \   m  \in  \mathbb{Z} \,   \rbrace   \cup 
 \lbrace  \mathcal{A}_n :  \   n  \in    \omega  \setminus \lbrace 0, 1 ,  2    \rbrace  \,   \rbrace      $, 
   $  \mathsf{rk} \left(  w  \right)  =  1  +  \mathsf{rk} \left(  g \right)    $
and    $  \mathsf{rk} \left(  v  \right)  =  1  +  \mathsf{rk} \left(  u \right)       \,   $.
\end{enumerate} 
 We consider case (1). 
Since  $  L $  is an $  L_{  \mathbb{T} }^-  $  homomorphism,  we get  
\[
 \langle  L(w_0) ,  L( w_1)  \rangle^{  \mathcal{B}  }  
 =   L(w)   =  L(v)   =  
  \langle  L(v_0) ,  L( v_1)  \rangle^{  \mathcal{B}  }  
\     .
\]
Since  $  \langle  \cdot  ,  \cdot   \rangle^{  \mathcal{B}  }   $  is one-to-one,  
this implies  $  L(w_0) = L(v_0) $  and  $  L(w_1) = L(v_1) $. 
By the induction hypothesis,   we get $  w_0 = v_0 $  and  $  w_1= v_1 $, 
which implies  $  w = v $.

 We consider case (2). 
 Let  
 \[
  \mathcal{G}^{  \prime  }    =   \begin{cases}
  \mathcal{R}_{ m-1}     &   \mbox{  if  }      \mathcal{G} =  \mathcal{R}_m    \mbox{  where  }   m  \in  \mathbb{Z}
  \\
    \mathcal{A}_{ n-1}     &   \mbox{  if  }      \mathcal{G} =  \mathcal{A}_n    \mbox{  where  }   n  \in  \omega  \setminus  \lbrace 0, 1,  2  \rbrace
\         .
  \end{cases}
 \]
 Since  $  L $  is an $  L_{  \mathbb{T} }^-  $  homomorphism,  we get  
\[
\langle    \mathcal{G}^{  \prime  }      \left(  L(g)  \right)    \,    ,    \,   L(g)   \rangle^{  \mathcal{B}  }  
= 
 \mathcal{G} \left(  L(g)  \right) 
 =   L(w)   =  L(v)   =  
  \langle  L(v_0) ,  L( v_1)  \rangle^{  \mathcal{B}  }  
\     .
\]
Since  $  \langle  \cdot  ,  \cdot   \rangle^{  \mathcal{B}  }   $  is one-to-one,  
this implies
\[
 \mathcal{G}^{  \prime  }      \left(  L(g)  \right)    =      L(v_0)   
 \     \mbox{  and   }       \    
    L(g)          =    L( v_1 )  
    \         .       \tag{*}
\]
Since    $  \mathcal{G}      \left(  g  \right)     \in     \mathsf{Cl}_{N}  \left(  \mathcal{W}_{k}  \left( X  ,  \mathsf{d}^0_0  \right)   \right)   $  
and $  \mathsf{rk}  \left(    \mathcal{G}      \left(  g   \right)   \right)   >  0  $, 
we also have  
 \[
   \mathcal{G}^{  \prime }      \left(   g \right)     \in     \mathsf{Cl}_{N}  \left(  \mathcal{W}_{k}  \left( X   ,  \mathsf{d}^0_0 \right)   \right)   
   \    \     \mbox{  and  }      \        \   
   \mathsf{rk}  \left(    \mathcal{G}      \left(  g \right)   \right)     \geq   
\mathsf{rk}  \left(    \mathcal{G}^{  \prime }       \left(  g  \right)   \right)   
\        .
\]
Hence,  from (*)   we get  
\[
  L   \left(    \mathcal{G}^{  \prime  }      \left(  g   \right)   \right)  
  =     \mathcal{G}^{  \prime  }      \left(  L(g)  \right)    =      L(v_0)   
 \     \mbox{  and   }       \    
    L(g)          =    L( v_1 )  
    \         .       
\]
By the induction hypothesis 
\[
  \mathcal{G}^{  \prime  }      \left(  g   \right)    =  v_0 
 \     \mbox{  and   }       \    
   g        =   v_1 
    \         .       
\]
Thus 
\[
w  =     \mathcal{G}     \left(  g   \right)  
= 
\langle    \mathcal{G}^{  \prime  }      \left( g   \right)    \,    ,    \,  g   \rangle^{  \mathcal{B}  }  
= 
\langle   v_0,  v_1   \rangle^{  \mathcal{B}  }  
=  v  
\          .
\]

 Finally,  we consider case (3). 
 Since  $ L$  is  an $L_{  \mathbb{T}  }^-$-homomorphism 
\[
 \mathcal{G}    \left(  L(g)  \right)      =  L(w)   =  L(v)   =     \mathcal{H}    \left(  L(u)  \right)  
\     .
\]
Since   any two distinct maps in   
$  \big\lbrace   \mathcal{R}_m   :   \   m  \in  \mathbb{Z}   \,   \rbrace    \cup  
 \big\lbrace   \mathcal{A}_n   :   \   n   \in     \omega  \setminus  \lbrace 0,  1  \rbrace   \,   \rbrace         $ 
 have disjoint images,   
 $   \mathcal{G}    =    \mathcal{H}  $. 
 This means  
 \[
  \mathcal{G}    \left(  L(g)  \right)   =   \mathcal{G}    \left(  L(u)  \right) 
  \      .
 \] 
 Since     $    \mathcal{G}    $  is one-to-one,   this implies  
 $  L(g)  =  L(u)  $. 
 By the induction hypothesis,  $  g  =  u  $, and thus 
 \[
 w  =    \mathcal{G}    \left(  g \right)   =   \mathcal{G}    \left(  u \right)     =   v 
 \       .
 \]

Thus,  by induction,  $L$ is one-to-one.
\end{proof}

We are now ready to state and prove the extension lemma.

\begin{lemma}  \label{SectionQFirstExtensionLemma}
Let  $  k  \in  \omega  $. 
Let  $X \subseteq   \mathbb{T} $  be finite,  let  $  a  \in \mathbb{T} $     and  let 
$  N   \geq  2  \vert     \mathcal{V}_{ k   }  \left(  X, a  \right)    \vert  + 1 $. 
Assume     $  F :  \mathcal{W}_{ k + N }  \left(  X  \right)   \to  \mathbb{T}     $   is  a $  \mathsf{Good}  $     $  L_{  \mathbb{T} } $-embedding. 
Then, there exists  a   $  \mathsf{Good}  $   $  L_{  \mathbb{T} }$-embedding 
$
  G  :   \mathcal{W}_{  k  }  \left(  X  \cup  \lbrace a  \rbrace  \right)   \to   \mathbb{T} 
$
that agrees with  $F$ on    $  \mathcal{W}_{ k }  \left(  X  \right)    $.
\end{lemma}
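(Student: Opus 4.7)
The plan is to extend the Good embedding $F$ on $\mathcal{W}_{k+N}(X)$ by defining its image on the subterms of $a$ that fall outside $\mathcal{W}_{k+N}(X)$, then to propagate this definition through the $L_{\mathbb{T}}$-structure to obtain $G$ on all of $\mathcal{W}_k(X \cup \lbrace a \rbrace)$. Since $\mathcal{W}_k(X \cup \lbrace a \rbrace) = \mathcal{W}_k(X) \cup \mathcal{W}_k(a)$ and the latter is determined up to successor translates by the finite set $\mathcal{V}_k(a)$, the only values of $G$ we must invent are on $\mathcal{V}_k(a) \setminus \mathcal{V}_k(X)$.

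First I would dispose of the trivial case $a \in \mathcal{W}_k(X)$ by taking $G := F \upharpoonright \mathcal{W}_k(X \cup \lbrace a \rbrace)$. Otherwise, I would enumerate $\mathcal{V}_k(a)$ in increasing $\prec$-order, which is possible since $\prec$ is well-founded on $\mathcal{W}_k(X, a)$ by Lemma \ref{SectionQWellFoundednessLemma}, and process each element $b$ in turn. If some successor translate of $b$ belongs to $\mathcal{V}_{k+N}(X)$ then $G(b)$ is forced by $F$; if $b = \mathsf{d}^m_0$ for some $m \in \mathbb{Z}$, the Good property forces $G(b) := b$; in every remaining ``fresh'' case, I would pick an image $G(b) \in \mathbb{T}$ satisfying $\chi(G(b)) = \chi(b)$ and the relational constraints forced by the already-defined part of $G$, invoking the $\aleph_{0}$-saturation of $\mathcal{B}$ to realize the corresponding type and building witnesses inside $\mathbb{T}$ using the maps $\mathcal{R}_m$ applied to suitable elements.

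Once $G$ is fixed on $\mathcal{V}_k(a)$, I would extend it to $\mathcal{W}_k(a)$ by declaring $G((\mathrm{S}^{\mathcal{M}})^r b) := (\mathrm{S}^{\mathcal{M}})^r G(b)$ for each $r \in \mathbb{Z}$ and $b \in \mathcal{V}_k(a)$, which is well-defined and preserves $\chi$, $\mathrm{S}^{\mathcal{M}}$, the graph of $\langle \cdot, \cdot \rangle^{\mathcal{B}}$, and the graphs of the $\mathcal{R}_m$ and $\mathcal{A}_n$ by construction (using the identities $\mathcal{R}_m(\cdot) = \langle \mathcal{R}_{m-1}(\cdot), \cdot \rangle^{\mathcal{B}}$ and $\mathcal{A}_n(\cdot) = \langle \mathcal{A}_{n-1}(\cdot), \cdot \rangle^{\mathcal{B}}$ from the generation lemma). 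Injectivity of the combined map $G$ then follows from an argument parallel to the claim inside the proof of Lemma \ref{SectionQNthClureLemma}: the acyclicity of $\langle \cdot, \cdot \rangle^{\mathcal{B}}$, the pairwise disjointness of the images of the $\mathcal{R}_m$ and $\mathcal{A}_n$, and the injectivity of $F$ together rule out every possible collision between an old value and a fresh value or between two fresh values.

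The main obstacle is ensuring the consistency of the fresh choices: each new image $G(b)$ must simultaneously preserve $\chi$, avoid every prior $F$-value and every already-chosen fresh value, and satisfy all pairing / $\mathcal{R}_m$ / $\mathcal{A}_n$ relations linking $b$ to previously defined elements. The hypothesis $N \geq 2 \lvert \mathcal{V}_k(X, a) \rvert + 1$ is exactly what makes this feasible: by Lemma \ref{SectionQWellFoundednessLemma} every $\prec$-chain inside $\mathcal{W}_{k+N}(X, a)$ has length bounded by $\lvert \mathcal{V}_{k+N}(X, a) \rvert$, so the type each new generator $b$ must realize is finitely described, and the saturation of $\mathcal{B}$ together with the generic character of the families $\lbrace \mathcal{R}_m \rbrace$ and $\lbrace \mathcal{A}_n \rbrace$ provides a realization that keeps $G$ Good and one-to-one.
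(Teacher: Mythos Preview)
Your overall architecture matches the paper's: process the new generators of $\mathcal{W}_k(X,a)$ in $\prec$-increasing order, let forced values be forced, and invent images for the genuinely fresh ones. The gap is in how you handle the fresh case. Invoking $\aleph_0$-saturation of $\mathcal{B}$ gives you realizations in $\mathbb{B}$, not in $\mathbb{T}$; and ``building witnesses inside $\mathbb{T}$ using the maps $\mathcal{R}_m$'' is exactly the wrong move for a fresh generator $b$. If $b$ is not of the form $\langle u,v\rangle^{\mathcal{B}}$, $\mathcal{R}_m(v)$, or $\mathcal{A}_n(v)$ for any $u,v$ already in the domain, then for $G$ to be an $L_{\mathbb{T}}$-\emph{embedding} (not just a homomorphism) the image $G(b)$ must likewise fail to be of any of these forms relative to the images already chosen. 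Anything built via $\mathcal{R}_m$ is in the image of $\mathcal{R}_m$ and hence of $\langle\cdot,\cdot\rangle^{\mathcal{B}}$, so it cannot serve. The paper's device is to send such $b$ to a constant $\mathsf{d}^{\chi(b)}_{q^\star}$ for a sufficiently large index $q^\star$; these constants lie in $\mathbb{T}_0\subseteq\mathbb{T}$, satisfy $\chi(\mathsf{d}^{\chi(b)}_{q^\star})=\chi(b)$, and by axiom are outside the image of $\langle\cdot,\cdot\rangle^{\mathcal{B}}$, hence outside every $\mathcal{R}_m$ and $\mathcal{A}_n$ as well.

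You have also misidentified where the hypothesis $N\ge 2\lvert\mathcal{V}_k(X,a)\rvert+1$ enters. It is not needed to make the type of a fresh generator finitely described (that is automatic, since only finitely many elements have been processed). Its role is in the injectivity proof: the number $\ell$ of extension steps is at most $\lvert\mathcal{V}_k(X,a)\rvert$, so $2\ell+1\le N$, and this is precisely what allows one to invoke Lemma~\ref{SectionQNthClureLemma} to compare the partially built map with a one-to-one $L_{\mathbb{T}}^-$-homomorphism $L$ on $\mathsf{Cl}_N(\mathcal{W}_k(X,\mathsf{d}^0_0))$ and thereby rule out collisions between a fresh value and an old $F$-value. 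Your sketch (``an argument parallel to the claim inside the proof of Lemma~\ref{SectionQNthClureLemma}'') gestures at this, but without the fresh-constant trick and the explicit rank bookkeeping $2\,\mathsf{rk}_q(w)\le N$ the comparison cannot be set up.
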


\begin{proof}

We construct a finite  increasing sequence    of   $  L_{  \mathbb{T} }$-embeddings 
\[
\left(  H_j  :  D_j  \to  \mathbb{T}  :   \    j  \in  \lbrace 0, 1  ,  \ldots ,   \ell  \rbrace   \,    \right)
\]
where  $  D_0  =    \mathcal{W}_{ k }  \left(  X  \right)     $, 
$ D_{  \ell  }  =   \mathcal{W}_{  k  }  \left(  X  \cup  \lbrace a  \rbrace  \right) $
and   the followings holds   for each    $   j  \in  \lbrace 0, 1  ,  \ldots ,   \ell  \rbrace   $: 
\begin{enumerate}

\item  Pick   $  w  \in    D_j   \setminus   D_0  $. 
 Let   $  p  \in     \lbrace 0, 1  ,  \ldots ,   j-1   \rbrace  $  be the largest index  $r$ such that  $  w  \not\in  D_r  $. 
 Then,  $  w  $  is  $  \prec  $-minimal in   $   \mathcal{W}_{  k  }  \left(  X  \cup  \lbrace a  \rbrace  \right)   \setminus  D_p  $
 and 
 \[
 D_{ p+1}  =  D_p  \cup
 \big\{     \left(  \mathrm{S}^{  \mathcal{M}  }  \right)^m   w :   \   m   \in  \mathbb{Z}    \,  \big\}   
   \        .
 \]

\item  Pick   $  w  \in    D_j   \setminus   \left( D_0  \cup   \lbrace  \mathsf{d}^m_0  :   \   m   \in  \mathbb{Z}  \,   \rbrace    \right)   $.   
 Let   $  p  \in     \lbrace 0, 1  ,  \ldots ,   j-1   \rbrace  $  be the largest index  $r$ such that  $  w  \not\in  D_r  $. 
Assume  the following holds: 
\begin{enumerate}
\item There does not exist  $  w_0, w_1  \in D_{p}  $  such that  $  w  =  \langle w_0, w_1  \rangle^{  \mathcal{B}  }   $.

\item     There does   not exist  $  m, r  \in  \mathbb{Z}  $  and  $  v  \in  D_{p}   $  such that 
 $  \left(  \mathrm{S}^{  \mathcal{M}  }  \right)^r   w = \mathcal{R}_m  \left( v  \right)  $.

  \item     There   does   not exist  $  n  \in  \omega  \setminus  \lbrace 0, 1, 2  \rbrace $,  $  r  \in  \mathbb{Z} $   and  $  v  \in  D_{p}  $  such that  
  $  \left(  \mathrm{S}^{  \mathcal{M}  }  \right)^r    w   = \mathcal{A}_n  \left( v  \right)  $.
\end{enumerate}
Then
\begin{multline*}
  H_{p+1}   \left[     \big\{     \left(  \mathrm{S}^{  \mathcal{M}  }  \right)^m   w :   \   m   \in  \mathbb{Z}    \,  \big\}     \right] 
\subseteq 
\\
   \big\{    \mathsf{d}^m_n  :   \   (n, m )  \in  \omega  \times  \mathbb{Z}   \, ,   \  n>0   \,   \big\}       \setminus 
\mathcal{V}     \left(   H_{ p}  \left[  D_{p}   \right]   \right)      
\        .
\end{multline*}

\item  $  \left(   \forall   w  \in  D_j   \right)   \left[  \,   F(w)  \neq  w   \rightarrow   w,  F(w)  \not\in   \lbrace  \mathsf{d}^m_0  :   \   m  \in  \mathbb{Z}  \,    \rbrace    \,    \right]   $.

\item  $  \chi \left(  w  \right)   =     \chi \left( F( w ) \right)   $  for all   $  w  \in  D_j    $.
\end{enumerate}
(In the proof, when we write ``by $(i) $" where $  i \in \lbrace 1, 2,  4  \rbrace $, we will be making reference to clause (i) above.)
Clause (1) tells us how the recursion is done. 
Clause (2) is a requirement to ensure that subsequent extensions of  $  H_j $ will be one-to-one.
This requirement can be met since for  any finite  set  $  A  \subseteq  \mathbb{T}  $,  by how  $  \mathbb{T}  $  is defined, 
  there exists   $  q  \in  \omega  $  such that  
\[
 \big\{   \left(  \mathrm{S}^{  \mathcal{M}  }  \right)^m  w  :   \   
  m  \in   \mathbb{Z}  \,  ,    \     w  \in     \mathcal{V}     \left(  A \right)      \big\}
 \cap  
 \big\{  \mathsf{d}^m_n  :   \   (n, m )  \in  \omega  \times   \mathbb{Z}   \,  ,    \,   n \geq   q  \, \big\}
  = 
   \emptyset  
  \     .
  \]
Recall also that  $  \mathrm{S}^{  \mathcal{M}  }  \left(   \mathsf{d}^m_n  \right)  =  \mathsf{d}^{m+1}_n   $. 
Clauses (3) and (4) say that  $  H_j $  is   \textsf{Good} . 
In particular,   $  H_{  \ell  }  $  will be a    \textsf{Good}  $ L_{  \mathbb{T} }  $-embedding. 
Since  $  D_0 =   \mathcal{W}_{ k }  \left(  X  \right)    $,  $  H_0  \subseteq  H_{  \ell }  $  and  $F$ agrees with  $  H_0  $  on  $  D_0  $,  
we just need to set  $G :=   H_{  \ell  }   $  and we will be  done.

Before we construct the sequence    $  \left(  H_j  :   \  j  \in  \lbrace 0, 1,  \ldots  ,  \ell  \rbrace   \,  \right)  $, 
it will be important to have a bound on $  \ell  $. 
It follows from the next claim that  $  2  \ell + 1  \leq  2  \vert   \mathcal{V}_k   \left(     X,  a  \right)    \vert     +1  \leq     N  $. 

   \begin{quote} {\bf (Claim)} \;\;\;\;\;\;    
$   \ell   \leq   \vert   \mathcal{V}_k   \left(     X,  a  \right)    \vert     \,  $.
 \end{quote}  

We define an equivalence relation on  $   \mathcal{W}_k   \left(     X,  a  \right)  $: 
\[
w  \sim  v  
\   \Leftrightarrow_{  \mathsf{def}  }    \   
\big\{  \left(  \mathrm{S}^{  \mathcal{M} }  \right)^m   w :   \   m  \in  \mathbb{Z} \,  \rbrace  
= 
\big\{  \left(  \mathrm{S}^{  \mathcal{M} }  \right)^m   v  :   \   m  \in  \mathbb{Z} \,  \rbrace  
\        .
\]
Since  $    \mathcal{W}_k   \left(     X,  a  \right)   $  is the closure in $  \mathbb{T} $ of  $   \mathcal{V}_k   \left(     X,  a  \right)  $
under   $   \mathrm{S}^{  \mathcal{M} }    $   and  $   \mathrm{P}^{  \mathcal{M} }    $  
\[
\vert  \mathcal{W}_k   \left(     X,  a  \right)    /   \sim  \vert   \leq     \vert   \mathcal{V}_k   \left(     X,  a  \right)    \vert   
\          .
\]
By (1),   
$  D_j  \setminus  D_{ j-1}  $  is a $  \sim$-equivalence class     for  each  $  j  \in  \lbrace 1, 2,  \ldots ,  \ell  \rbrace  $. 
It then follows that  $  \ell $ is at most the number of  $  \sim $-equivalence classes, 
and hence    $   \ell   \leq   \vert   \mathcal{V}_k   \left(     X,  a  \right)    \vert    \,  $.
This completes the proof of the claim.

We need one more observation.

   \begin{quote} {\bf (Claim)} \;\;\;\;\;\;    
   For  all $  w  \in  \mathbb{T} $, there exist  at most one
     $  v  \in  \lbrace   \left(   \mathrm{S}^{  \mathcal{M}  }  \right)^r   w :   \   r  \in  \mathbb{Z}   \,   \rbrace   $  
   such that   $  w  =  \mathcal{G} \left( v  \right)  $  for some  
   $  v  \in  \mathbb{T}  $  and   
   $    \mathcal{G}  \in    \lbrace  \mathcal{R}_m  :   \   m  \in  \mathbb{Z}  \,   \rbrace    \cup  
  \lbrace  \mathcal{A}_n   :   \   n  \in  \omega  \setminus  \lbrace 0,   1   \rbrace   \,   \rbrace        \,     $.
 \end{quote}

 We will use the following property of the model  $  \mathcal{M} $: 
 \[
  \left(   \mathrm{S}^{  \mathcal{M}  }  \right)^r    g   \neq   g  
 \      \       \      \mbox{  for all }   g  \in  \mathbb{T} 
  \mbox{  and    all }   r  \in  \mathbb{Z}
  \         .      \tag{*}  
 \]
Assume  there exist  $  r, s  \in   \mathbb{Z}  $    such that  
\[
 \left(   \mathrm{S}^{  \mathcal{M}  }  \right)^r    w   =     \mathcal{G} \left( u  \right)  
\        \      \mbox{  and  }      \       \  
 \left(   \mathrm{S}^{  \mathcal{M}  }  \right)^s    w   =     \mathcal{H} \left( v  \right)   
\]
where   $  u, v  \in  \mathbb{T}  $  and  
 $    \mathcal{G}  ,    \mathcal{H}    \in    \lbrace  \mathcal{R}_m  :   \   m  \in  \mathbb{Z}  \,   \rbrace    \cup  
  \lbrace  \mathcal{A}_n   :   \   n  \in  \omega  \setminus  \lbrace 0,   1   \rbrace   \,   \rbrace        \,     $.
We need to show that  $  r  =  s  $. 
For  $   \mathcal{F}  \in  \lbrace   \mathcal{G} ,   \mathcal{H}  \rbrace  $,  let  
\[
 \mathcal{F}^{  \prime  }  =  \begin{cases}
 \mathcal{R}_{ m-1}      &     \mbox{  if  }     \mathcal{F}  =  \mathcal{R}_m     \mbox{  where  }  m  \in  \mathbb{Z}
 \\
  \mathcal{A}_{ n-1}      &     \mbox{  if  }     \mathcal{F}  =  \mathcal{A}_n     \mbox{  where  }  n  \in  \omega  \setminus  \lbrace 0, 1  \rbrace
\       .
 \end{cases}
\]
Recall that  $    \mathcal{A}_{ 1}   $  is the identity  map.  
We have 
\[
 \left(   \mathrm{S}^{  \mathcal{M}  }  \right)^r    w   =   
  \langle  \mathcal{G}^{  \prime  }  \left( u  \right)    \,  ,  \,   u  \rangle^{  \mathcal{B}  }
\        \      \mbox{  and  }      \       \  
 \left(   \mathrm{S}^{  \mathcal{M}  }  \right)^s    w   =   
  \langle  \mathcal{H}^{  \prime  }  \left( u  \right)    \,  ,  \,   v  \rangle^{  \mathcal{B}  }
\       .
\]
By how    $   \mathrm{S}^{  \mathcal{M}  }   $  is defined,  
this means that  
\[
  \langle  \mathcal{G}^{  \prime  }  \left( u  \right)    \,  ,  \,   \left(   \mathrm{S}^{  \mathcal{M}  }  \right)^{-r}  u  \rangle^{  \mathcal{B}  }
  = 
  w 
   = 
     \langle  \mathcal{H}^{  \prime  }  \left( u  \right)    \,  ,  \,    \left(   \mathrm{S}^{  \mathcal{M}  }  \right)^{-s}   v  \rangle^{  \mathcal{B}  }
\      .
\]
Since  $  \langle  \cdot  ,  \cdot    \rangle^{  \mathcal{B}  }  $  is one-to-one 
\[
 \mathcal{G}^{  \prime  }  \left( u  \right)    =     \mathcal{H}^{  \prime  }  \left( v  \right)    
 \        \mbox{  and   }     \   
 \left(   \mathrm{S}^{  \mathcal{M}  }  \right)^{-r}  u     =      \left(   \mathrm{S}^{  \mathcal{M}  }  \right)^{-s}   v  
 \     .
\] 
Since  distinct maps in 
$  \lbrace  \mathcal{R}_m  :   \   m  \in  \mathbb{Z}  \,   \rbrace    \cup  
  \lbrace  \mathcal{A}_n   :   \   n  \in  \omega  \setminus  \lbrace 0,   1   \rbrace   \,   \rbrace       $ 
  have disjoint images,    $   \mathcal{G}^{  \prime  }    =   \mathcal{H}^{  \prime  }    \,   $.
  Since  $   \mathcal{G}^{  \prime  }   $  is one-to-one,
  from  $   \mathcal{G}^{  \prime  }  \left( u  \right)    =     \mathcal{G}^{  \prime  }  \left( v  \right)     $  we get    $  u =  v  $.
This means that  
\[
 \left(   \mathrm{S}^{  \mathcal{M}  }  \right)^{-r}  u     =      \left(   \mathrm{S}^{  \mathcal{M}  }  \right)^{-s}   u  
\       .
\]
By (*),  it follows that  we must have  $  r  =  s  $. 
This completes the proof of the claim.

We proceed to construct the sequence of    $ L_{  \mathbb{T} }  $-embeddings. 
First,  let   $  D_0  :=   \mathcal{W}_{ k }  \left(  X  \right)    $
and let    $  H_0  $  be the restriction of  $F$ to $  D_0  $. 
Since  $F$ is a   \textsf{Good}  $ L_{  \mathbb{T} }  $-embedding, 
$  H_0 $  is also a   a   \textsf{Good}  $ L_{  \mathbb{T} }  $-embedding. 
Next,  assume we have defined  
\[
  H_0  \subseteq  H_1  \subseteq   \ldots  \subseteq  H_{ q-1}  
  \    \mbox{  and  }     \  
  D_0  \subseteq  D_1  \subseteq   \ldots    \subseteq  D_{ q-1 } 
  \       .
  \]
If  $   D_{ q-1}  =   \mathcal{W}_{  k }  \left(  X  , a  \right)         $, stop.
Otherwise, we proceed to  construct  $ H_q  $. 
Pick $    \mathsf{g}   \in  \mathcal{W}_{  k }  \left(  X  , a   \right)    \setminus D_{ q-1 }  $ 
that is  $  \prec $-minimal in   $     \mathcal{W}_{  k }  \left(  X  , a   \right)    \setminus D_{ q-1 }     \,   $. 
To simplify matters,  we assume  there does not exist  $  r    \in    \mathbb{Z}   \setminus  \lbrace  0  \rbrace $ and 
$  v  \in  D_{ q-1}  $ that:  
\begin{itemize}
\item[-]   $   \left(  \mathrm{S}^{  \mathcal{M}  }  \right)^r   \mathsf{g}   =   \mathcal{R}_m  \left(  v  \right)  $ for some  $  m \in  \mathbb{Z} $;

\item[-]   $   \left(  \mathrm{S}^{  \mathcal{M}  }  \right)^r   \mathsf{g}   =   \mathcal{A}_n \left(  v  \right)  $ 
for some  $  n \in  \omega  \setminus  \lbrace 0, 1,  2  \rbrace $.
\end{itemize} 
We can do this since   there  exist   at most one
     $  w   \in  \lbrace   \left(   \mathrm{S}^{  \mathcal{M}  }  \right)^r    \mathsf{g} :   \   r  \in  \mathbb{Z}   \,   \rbrace   $  
   such that   $  w  =  \mathcal{G} \left( w_1   \right)  $  for some  
   $  w_1  \in  \mathbb{T}  $  and  
   $    \mathcal{G}  \in    \lbrace  \mathcal{R}_m  :   \   m  \in  \mathbb{Z}  \,   \rbrace    \cup  
  \lbrace  \mathcal{A}_n   :   \   n  \in  \omega  \setminus  \lbrace 0,   1   \rbrace   \,   \rbrace        \,     $.
 Set  
\[
D_q  :=  D_{ q-1 }  \cup  
  \big\{  \left(   \mathrm{S}^{  \mathcal{M}  }  \right)^r   \mathsf{g}   :   \  r  \in  \mathbb{Z}  \   \big\}
\       .
\]
Since  $  H_q  :   D_q  \to   \mathbb{T} $  is  supposed to be   an extension of  $  H_{ q-1 }  $, 
  we just    need to define    $  H_q $   on   
  $  \big\{  \left(   \mathrm{S}^{  \mathcal{M}  }  \right)^r   \mathsf{g}   :   \  r  \in  \mathbb{Z}  \   \big\}  $. 
  We proceed to do so. 
Let  $ q^{  \star }   $  be  the least  $  i  \in  \omega \setminus  \lbrace 0  \rbrace  $  such that   
 \[
 \big\{  \mathsf{d}^r_{  i   }  :   \   r   \in     \mathbb{Z}   \,  ,  \big\}  
   \cap     
    \mathcal{V}     \left(   H_{ q-1}  \left[  D_{q-1}   \right]   \right) 
  = 
   \emptyset  
  \     .
  \]  
We define   $  H_q  $ on  $  D_p  \setminus  D_{p-1}  $  as follows: 
\[
H_q  \left(      \left(   \mathrm{S}^{  \mathcal{M}  }  \right)^r   \mathsf{g}     \right)   
 := 
   \left(   \mathrm{S}^{  \mathcal{M}  }  \right)^r   H_q  \left(      \mathsf{g}     \right)   
   \     \mbox{  for  all }   r  \in  \mathbb{Z}   
\]
and    
\[
  H_q  \left(      \mathsf{g}     \right)   
:=  
\begin{cases}
  \langle H_{ q-1} ( g_0 )   \,  ,  \,   H_{ q-1} ( g_1 )   \rangle^{  \mathcal{B}  }
         &   \mbox{ if  }   \mathsf{g}   = \langle  g_0   ,      g_1   \rangle^{  \mathcal{B}  }  
                      \mbox{  and  }    g_0,  g_1   \in   D_{ q-1 }

 \\
  \mathcal{R}_m  \left( H_{ q-1} ( h )     \right)          &   \mbox{ if  }   \mathsf{g}   =  \mathcal{R}_m  \left(  h     \right) 
             \mbox{  and  }     h  \in   D_{ q-1 }   
 \\
   \mathcal{A}_n  \left( H_{ q-1} ( h )     \right)          &   \mbox{ if  }   \mathsf{g}   =  \mathcal{A}_n  \left(  h     \right) 
                \mbox{  and  }     h  \in   D_{ q-1 }   
 \\
\mathsf{d}^{   \chi \left(  \mathsf{g}  \right)   }_0                        
                                                    &   \mbox{ if  }   \mathsf{g}   =  \mathsf{d}_0^{    \chi \left(  \mathsf{g}  \right)  }
 \\
  \mathsf{d}_{ q^{  \star } }^{     \chi \left(  \mathsf{g}  \right)     }      &    \mbox{ otherwise}     
\end{cases}
\]
where  $  m  \in  \mathbb{Z}  $  and    $  n  \in  \omega  \setminus  \lbrace 0, 1 ,  2  \rbrace   $. 
Recall that  $  \mathcal{A}_2  (v)  =  \langle  v,  v  \rangle^{  \mathcal{B}  }  $  for all $  v  \in  \mathbb{T}  $.
Recall also that  $  \chi  \left(   \mathsf{d}^m_n \right)  =  m  $  for all  $  (n,m)  \in    \omega  \times  \mathbb{Z}      \,     $.

We need to show  that       $  H_q  $ is a well-defined   $ L_{  \mathbb{T} } $-embedding. 
This  will follow from the following   claims.

   \begin{quote} {\bf (Claim)} \;\;\;\;\;\;    
$  H_q  $  is  a well-defined    $ L_{  \mathbb{T} } $-homomorphism. 
 \end{quote}

 We just need to show that  $  H_q   \left(   \mathsf{g}   \right)  $  is well-defined.
First, we consider the case     $    \mathsf{g}   \in  \big\{   \mathsf{d}^m_0   :   \    m   \in    \mathbb{Z}  \,   \big\}  $. 
Assume    $   \mathsf{g}   =   \mathsf{d}^{  m }_0   $   where   $  m  \in  \mathbb{Z}        \,   $. 
We have  
\[
 H_q   \left(   \mathsf{g}   \right)   : =  
 \mathsf{d}_0^{    \chi \left(  \mathsf{g}  \right)  }        =     \mathsf{d}^{  m }_0
  \        .
\]
Since   $  \mathsf{g}   $  has only one presentation,    $  H_q   \left(   \mathsf{g}   \right)  $  is well-defined.

 Next,  we consider the case  
 \[
   \mathsf{g}   = \langle  g_0   ,      g_1   \rangle^{  \mathcal{B}  }    
   \     \mbox{  where   }    \     g_0, g_1  \in D_{p-1}  
   \      .
   \]
  It could be that  $  \mathsf{g}   $  has a different presentation. 
 Clearly,    $  \mathsf{g}   \not\in   \big\{  \mathsf{d}^m_n :   \   (n, m)  \in  \omega  \times  \mathbb{Z}  \,  \rbrace  $. 
 We thus have the following cases: 
 \begin{itemize}
 \item[(i)]  $  \mathsf{g}   = \langle  h_0   ,      h_1   \rangle^{  \mathcal{B}  }    $   where   $ h_0, h_1  \in D_{ q-1}  $

 \item[(ii) ]   $  \mathsf{g} =  \mathcal{G} \left( v  \right)  $  where  $  v  \in D_{ q-1}  $   and  
  $  \mathcal{G}  \in    \lbrace  \mathcal{R}_m   :  \   m  \in  \mathbb{Z} \,   \rbrace   \cup 
 \lbrace  \mathcal{A}_n :  \   n  \in    \omega  \setminus \lbrace 0, 1 ,  2   \rbrace  \,   \rbrace       \,   $.
 \end{itemize}
 We consider (i). 
 Since $  \langle  \cdot  ,  \cdot   \rangle^{  \mathcal{B} }  $  is one-to-one,  
from $  \langle  g_0   ,      g_1   \rangle^{  \mathcal{B}  }      = \langle  h_0   ,      h_1   \rangle^{  \mathcal{B}  }    $
we get  $  g_0 = h_0  $  and  $  g_1 = h_1  $.
We   consider (ii). 
Let  
\[
\mathcal{G}^{  \prime  }  :=   \begin{cases}
\mathcal{R}_{ m-1}      \mbox{  if  }     \mathcal{G} =  \mathcal{R}_m   \mbox{  where  }  m  \in  \mathbb{Z}
\\
\mathcal{A}_{ n-1}      \mbox{  if  }     \mathcal{G} =  \mathcal{A}_n   \mbox{  where  }  n  \in  \omega \setminus  \lbrace 0, 1 , 2  \rbrace 
\      .
\end{cases}
\]
A priori,  we may have  $ \mathcal{G}^{  \prime  }  \left(  v  \right)  \not\in  \mathcal{W}_k  \left(  X,  a  \right)     $
even though    $ \mathcal{G}  \left(  v  \right)    \in  \mathcal{W}_k  \left(  X,  a  \right)     $. 
However,   since $  \langle  \cdot  ,  \cdot   \rangle^{  \mathcal{B} }  $  is one-to-one,  
from   $   \langle  g_0   ,      g_1   \rangle^{  \mathcal{B}  }    =   \mathcal{G}  \left(  v  \right)  
=  \langle  \mathcal{G}^{  \prime  }  \left(  v  \right)    ,  v  \rangle^{  \mathcal{B}  }
$ 
we get   
\[
   g_1  =  v      \   \mbox{  and   }    \     g_0  =    \mathcal{G}^{  \prime  }  \left(  v  \right)   
   \     .    
   \]
 Since  $  H_{ q-1}  $     is  an  $  L_{  \mathbb{T}  }  $-homomorphism    and  $  g_1  =  v  $
 \[
   H_{q-1}  (g_0 )   =     \mathcal{G}^{  \prime  }    \left(    H_{ q-1} (v)  \right)   
   \     \mbox{  and   }    \   
   H_{ q-1 } (g_1)  =  H_{ q-1}  (v)  
   \       .
\]   
Using the presentation    $ \mathsf{g}   = \langle  g_0   ,      g_1   \rangle^{  \mathcal{B}  }      $  gives  
$  H_q  \left(  \mathsf{g}  \right)  =  \langle  H_{q-1}  (g_0 )   ,     H_{ q-1} ( g_1)   \rangle^{  \mathcal{B}  }   $. 
Using the presentation   $ \mathsf{g} =  \mathcal{G}   \left(  v  \right)   $  also  gives 
\begin{align*}
 H_q  \left(  \mathsf{g}  \right)   
 &= 
  \mathcal{G}   \left(    H_{ q-1} (v)  \right) 
  \\
  &=  
  \langle      \mathcal{G}^{  \prime  }    \left(    H_{ q-1} (v)  \right)   \,  ,  \,     H_{ q-1} (v)     \rangle^{  \mathcal{B}  }
  \\
  &=    \langle  H_{q-1}  (g_0 )   ,     H_{ q-1} ( g_1)   \rangle^{  \mathcal{B}  }  
  \       .
\end{align*}
Thus,  $  H_q  \left(  \mathsf{g}  \right)   $  is well-defined.

Finally, we consider the case 
\[
   \mathcal{G}   \left(  u  \right)     =  \mathsf{g} =     \mathcal{H}   \left(  v  \right)   
\]
where    $  u,  v  \in  D_{ q-1}    $   and  
$  \mathcal{G} ,   \mathcal{H}  \in    \lbrace  \mathcal{R}_m   :  \   m  \in  \mathbb{Z} \,   \rbrace   \cup 
 \lbrace  \mathcal{A}_n :  \   n  \in    \omega  \setminus \lbrace 0, 1 ,  2   \rbrace  \,   \rbrace    \,    $. 
Since  distinct maps in 
$   \lbrace  \mathcal{R}_m   :  \   m  \in  \mathbb{Z} \,   \rbrace   \cup 
 \lbrace  \mathcal{A}_n :  \   n  \in    \omega  \setminus \lbrace 0, 1   \rbrace  \,   \rbrace    $  
 have disjoint images,  
 from    $   \mathcal{G}   \left(  u  \right)    =     \mathcal{H}   \left(  v  \right)     $   we get     $     \mathcal{G}   =     \mathcal{H}      \,  $. 
Since  $     \mathcal{G}    $  is one-to-one,  
from    $     \mathcal{G}   \left(  u  \right)   =     \mathcal{G}   \left(  v  \right)     $  we get  $  u  = v  $. 
Thus,   $  H_q  \left(  \mathsf{g}  \right)   $  is well-defined  in this case also.

   \begin{quote} {\bf (Claim)} \;\;\;\;\;\;    
$  H_q  $   is  \textsf{Good}.
 \end{quote}

 By how   $  H_q  $   is defined,    we just need to show that    
 $  \chi   \left(      \left(   \mathrm{S}^{  \mathcal{M}  }  \right)^r   \mathsf{g}     \right)     =  
 \chi   \left(      \left(   \mathrm{S}^{  \mathcal{M}  }  \right)^r       H_q   \left( \mathsf{g}    \right)   \right)     $  for  all  $  r  \in  \mathbb{Z} $. 
 Since    $     \chi   \left(      \left(   \mathrm{S}^{  \mathcal{M}  }  \right)^r   w   \right)    =  r  +  \chi (w)  $  for all $  w  \in \mathbb{T} $, 
 it suffices to show that  
 $  \chi   \left(     H_q   \left( \mathsf{g}    \right)   \right)    =   \chi   \left(    \mathsf{g}  \right)       \,   $.
We have  
\begin{align*}
 \chi   \left(     H_q   \left( \mathsf{g}    \right)   \right)      
 &=  \begin{cases}
 \chi   \left(      H_{ q-1} ( g_1 )   \right) 
         &   \mbox{ if  }   \mathsf{g}   = \langle  g_0   ,      g_1   \rangle^{  \mathcal{B}  }  
 \\
 \chi   \left(      H_{ q-1} ( h )   \right) 
           &   \mbox{ if  }   \mathsf{g}   =  \mathcal{R}_m  \left(  h     \right) 
 \\
 \chi   \left(      H_{ q-1} ( h  )   \right) 
         &   \mbox{ if  }   \mathsf{g}   =  \mathcal{A}_n  \left(  h     \right)  
 \\
  \chi   \left(    \mathsf{g}  \right)         &    \mbox{ otherwise }     
\end{cases}
\\
 &=  \begin{cases}
 \chi   \left(     g_1  \right) 
         &   \mbox{ if  }   \mathsf{g}   = \langle  g_0   ,      g_1   \rangle^{  \mathcal{B}  }  
 \\
 \chi   \left(   h \right) 
           &   \mbox{ if  }   \mathsf{g}   =  \mathcal{R}_m  \left(  h     \right) 
 \\
 \chi   \left(    h \right) 
         &   \mbox{ if  }   \mathsf{g}   =  \mathcal{A}_n  \left(  h     \right)  
 \\
  \chi   \left(    \mathsf{g}  \right)         &    \mbox{ otherwise }     
\end{cases}
\\
&=   \chi  \left(  \mathsf{g}  \right)  
\        . 
\end{align*}
  where the second  equality uses that  $  H_{ q-1}  $  is \textsf{Good}  and  in the first equality we have used that  $  \chi \left(  \mathsf{d}^m_n  \right)  =  m  $. 
 Thus,  $  H_q  $  is  \textsf{Good.}

 The next claim in combination with  Lemma    \ref{SectionQNthClureLemma} will be used to show that  $  H_q  $  is  one-to-one. 
 Lemma      \ref{SectionQNthClureLemma}  says that  
we have a  \textsf{Good}   one-to-one  $  L_{  \mathbb{T} }^- $-homomorphism   
 $L  :    \mathsf{Cl}_N  \left(    \mathcal{W}_k  \left( X  ,  \mathsf{d}^0_0 \right)    \right)  \to  \mathbb{T} $ 
 that agrees with $F$ on   $  \mathcal{W}_k  \left( X  \right)  $.  
 We  define a rank function  $  \mathsf{rk}_q  $  on  $  D_q $. 
 \[
\mathsf{rk}_q  \left(  w   \right) :=      \min  \lbrace   j  \in  \lbrace 0, 1,  \ldots ,  q  \rbrace :   \   w   \in  D_j  \,  \rbrace 
\       .
\]

    \begin{quote} {\bf (Claim)} \;\;\;\;\;\;    
    Let   $  w  \in  D_q  $.
Assume there does not exist    $ c  \in  \big\{  \mathsf{d}^m_n :   \  (n, m  )  \in  \omega  \times  \mathbb{Z}  \,  \rbrace 
\setminus  \left(    \big\{  \mathsf{d}^m_0 :   \   m  \in   \mathbb{Z}  \,  \rbrace   \cup  \mathcal{V} \left(  F \left[ D_0  \right]  \right)    \right)   $ 
such that  $  c  \preceq   H_q (w)   $. 
Then,   $  H_q (w)   \in      \mathsf{Cl}_{   2\mathsf{rk}_q  \left(  w   \right) }  \left(    \mathcal{W}_k  \left( X   ,  \mathsf{d}^0_0  \right)    \right)   
\subseteq    \mathsf{Cl}_{   N }  \left(    \mathcal{W}_k  \left( X    ,  \mathsf{d}^0_0  \right)    \right)       $
and  $  H_q  (w)  =  L(w)  \,  $. 
 \end{quote}

 Recall that  in the first claim we showed that  $  2 \ell   +1  \leq  N   $; 
 in particular,  $  2q  <  N  $. 
 We prove by induction on   $    \mathsf{rk}_q  \left(  w   \right)    $  that  
  $  H_q (w)   \in      \mathsf{Cl}_{   2\mathsf{rk}_q  \left(  w   \right) }  \left(    \mathcal{W}_k  \left( X    ,  \mathsf{d}^0_0   \right)    \right)     $
  and  $  H_q (w)  =  L(w)  \,   $. 
  If  $  \mathsf{rk}_q  \left(   w   \right)  = 0 $,  then this clearly holds. 
We consider the case  $   \mathsf{rk}_q  \left(   w  \right)   >  0  $. 
By (2),  we   have the following cases: 
\begin{itemize}
\item[(a)]  $     w  =  \langle u,  v  \rangle^{  \mathcal{B}  }  $  where  
$  u,  v  \in  D_{q-1}     \,  $.

\item[(b)]  $    w  =  \left(  \mathrm{S}^{  \mathcal{M}  }  \right)^r    \mathcal{G}  \left(   v  \right)   $  where  
$  r  \in  \mathbb{Z} $, 
$   v  \in  D_{q-1}     $
and     $  \mathcal{G}  \in    \lbrace  \mathcal{R}_m   :  \   m  \in  \mathbb{Z} \,   \rbrace   \cup 
 \lbrace  \mathcal{A}_n :  \   n  \in    \omega  \setminus \lbrace 0, 1   \rbrace  \,   \rbrace       \,   $.
\end{itemize}
We consider (a). 
In this case we also have no    $ c  \in  \big\{  \mathsf{d}^m_n :   \  (n, m  )  \in  \omega  \times  \mathbb{Z}  \,  \rbrace 
\setminus  \left(    \big\{  \mathsf{d}^m_0 :   \   m  \in   \mathbb{Z}  \,  \rbrace   \cup  \mathcal{V} \left(  F \left[ D_0  \right]   \right)    \right)   $ 
such that  $  c  \preceq    H_q (u)   $  or  $  c  \preceq  H_q ( v )  $  
since we would otherwise get  $  c  \preceq   H_q (w)  $. 
Hence, by the induction hypothesis,  
$  u,  v  \in     \mathsf{Cl}_{ p  }  \left(    \mathcal{W}_k  \left( X   ,  \mathsf{d}^0_0   \right)     \right)   $  
where    
$  p  =  \max  \left(   2 \mathsf{rk}_q  \left(   u \right)  \,  ,  \,  2 \mathsf{rk}_q  \left(  v  \right)   \right)     $, 
  $  H_q (u) = L(u)  $  and  $  H_q (v)  =  L(v)  $. 
  Since  $  H_q  $  and  $  L$  are both  $  L_{  \mathcal{T} }^-$-homomorphisms 
  \[
H_q (w)  =    \langle H_q (u) ,  H_q ( v )   \rangle^{  \mathcal{B}  }    =      \langle  L (u) ,  L   ( v )   \rangle^{  \mathcal{B}  } 
=  L(w) 
\      .
  \]
Furthermore 
\[
w  \in    \mathsf{Cl}_{ p +1    }  \left(    \mathcal{W}_k  \left( X   ,  \mathsf{d}^0_0  \right)     \right)     
\subseteq     \mathsf{Cl}_{ p + 2    }  \left(    \mathcal{W}_k  \left( X    ,  \mathsf{d}^0_0   \right)     \right)    
\subseteq  
  \mathsf{Cl}_{   2  \mathsf{rk}_q  \left(  w   \right) }  \left(    \mathcal{W}_k  \left( X    ,  \mathsf{d}^0_0 \right)    \right) 
\]
since  $   \mathsf{rk}_q  \left(  w   \right)   \geq   1  +   \max  \left(   \mathsf{rk}_q  \left(   u \right)  \,  ,  \,   \mathsf{rk}_q  \left(  v  \right)   \right)      \,    $.

Finally, we consider (b). 
We have  $    w  =   \left(  \mathrm{S}^{  \mathcal{M}  }  \right)^r    \mathcal{G}  \left(     v  \right)   $. 
As above,  by the induction hypothesis,  
$  v   \in    \mathsf{Cl}_{   2  \mathsf{rk}_q  \left(  v   \right) }  \left(    \mathcal{W}_k  \left( X    ,  \mathsf{d}^0_0   \right)    \right)   $,  
and    $  H_q  (v)  =  L(v)     \,  $. 
  Since  $  D_q $  is closed under    $   \mathrm{S}^{  \mathcal{M}  }  $  and   $  \mathrm{P}^{  \mathcal{M}  } $, 
  we also have  
 $    \left(  \mathrm{S}^{  \mathcal{M}  }  \right)^r  v   \in    
 \mathsf{Cl}_{   2  \mathsf{rk}_q  \left(  v   \right) }  \left(    \mathcal{W}_k  \left( X   ,  \mathsf{d}^0_0  \right)    \right)   $  
  and  $  H_q  \left(   \left(  \mathrm{S}^{  \mathcal{M}  }  \right)^r  v   \right)  = 
  L \left(   \left(  \mathrm{S}^{  \mathcal{M}  }  \right)^r  v   \right)    \,   $
  where the equality of $  \mathsf{rk}_q  $ follows from (1). 
  Since $  v  \in     \mathsf{Cl}_{   2  \mathsf{rk}_q  \left(  v   \right) }  \left(    \mathcal{W}_k  \left( X   ,  \mathsf{d}^0_0  \right)    \right)   $, 
  \[
  \mathcal{G}^{  \prime }  \left(  v  \right)   \in  
    \mathsf{Cl}_{   2  \mathsf{rk}_q  \left(  v   \right) +1  }  \left(    \mathcal{W}_k  \left( X    ,  \mathsf{d}^0_0   \right)    \right)
  \    \mbox{  and  }    \  
  L  \left(      \mathcal{G}^{  \prime }  \left(  v  \right)      \right)  =     \mathcal{G}^{  \prime }  \left(   L(v)  \right)   
  \]
  where  
\[
\mathcal{G}^{  \prime  }  :=   \begin{cases}
\mathcal{R}_{ m-1}      \mbox{  if  }     \mathcal{G} =  \mathcal{R}_m   \mbox{  where  }  m  \in  \mathbb{Z}
\\
\mathcal{A}_{ n-1}      \mbox{  if  }     \mathcal{G} =  \mathcal{A}_n   \mbox{  where  }  n  \in  \omega \setminus  \lbrace 0, 1  \rbrace 
\      .
\end{cases}
\]
Recall that  $  \mathcal{A}_1 $  is the identity map. 
Since   
 $   w  =    \left(  \mathrm{S}^{  \mathcal{M}  }  \right)^r       \mathcal{G} \left(  v  \right)   
  =  \langle      \mathcal{G}^{  \prime }  \left(  v  \right)    \,  ,  \, 
  \left(  \mathrm{S}^{  \mathcal{M}  }  \right)^r   v  \rangle^{  \mathcal{B}  }  $
\[
w  \in     \mathsf{Cl}_{   2  \mathsf{rk}_q  \left(  v   \right) + 2 }  \left(    \mathcal{W}_k  \left( X    ,  \mathsf{d}^0_0   \right)    \right)
\subseteq    
 \mathsf{Cl}_{   2  \mathsf{rk}_q  \left(  w   \right)  }  \left(    \mathcal{W}_k  \left( X    ,  \mathsf{d}^0_0    \right)    \right)
\]
and  
\begin{align*}
L(w)    
&=  
 \langle      \mathcal{G}^{  \prime }  \left(   L(v)  \right)    \,  ,  \,  L(  \left(  \mathrm{S}^{  \mathcal{M}  }  \right)^r v)  \rangle^{  \mathcal{B}  } 
\\
&=  \langle      \mathcal{G}^{  \prime }  \left(   H_q (v)  \right)    \,  ,  \,  H_q (  \left(  \mathrm{S}^{  \mathcal{M}  }  \right)^r v)  \rangle^{  \mathcal{B}  } 
\\
&= \langle      \mathcal{G}^{  \prime }  \left(   H_q (v)  \right)    \,  ,  \,  \left(  \mathrm{S}^{  \mathcal{M}  }  \right)^r     H_q (  v)  \rangle^{  \mathcal{B}  } 
\\
&=   \left(  \mathrm{S}^{  \mathcal{M}  }  \right)^r      \mathcal{G}  \left(   H_q (v)  \right)  
\\
&=    H_q  \left(    \mathcal{G}  \left(  \left(  \mathrm{S}^{  \mathcal{M}  }  \right)^r  v   \right)    \right) 
\end{align*}
where the last equality uses that  $  H_q  $ is an $L_{ \mathbb{T}  }  $-homomorphism. 
This completes the proof of the claim.

   \begin{quote} {\bf (Claim)} \;\;\;\;\;\;    
$  H_q  $  is  one-to-one. 
 \end{quote}  

Assume for the sake of a contradiction  $  H_q  $  is not one-to-one. 
For  each $  v  \in  \mathbb{T}  $,  we clearly have  $  \left(  \mathrm{S}^{  \mathcal{M}  }  \right)^r   v  \neq 
  \left(  \mathrm{S}^{  \mathcal{M}  }  \right)^s   v    $
  for distinct  $  r, s  \in  \mathbb{Z} $. 
  Hence,   since  $  H_{ q- 1  }  $  is one-to-one, $  D_{ q-1}  $  is closed under   $  \mathrm{S}^{  \mathcal{M}  }   $  
  and   $ \mathrm{P}^{  \mathcal{M}  }  $   and   $H_q   $  is an  $  L_{  \mathbb{T}  } $-homomorphism, 
  this means that  there   exists   $  w  \in D_{ q-1}  $  such that    $  H_{ q-1 } (w)  \neq  H_q  \left(  \mathsf{g} \right)   $. 
We have two cases: 
\begin{itemize}
\item[(I)]   $  w  \in  \mathcal{W}_k  \left( X  \right)  $;

\item[(II)]   $  w    \not\in  \mathcal{W}_k  \left( X  \right)  $. 
\end{itemize} 
We consider (I). 
By Lemma    \ref{SectionQNthClureLemma}, 
we have a  \textsf{Good}   one-to-one  $  L_{  \mathbb{T} }^- $-homomorphism   
 $L  :    \mathsf{Cl}_N  \left(    \mathcal{W}_k  \left( X    ,  \mathsf{d}^0_0   \right)    \right)  \to  \mathbb{T} $ 
 that agrees with $F$ on   $  \mathcal{W}_k  \left( X  \right)  $.  
 By (2),  
 since  $  H_q  \left(  \mathsf{g} \right)  =  H_q  (w)  = F(w)  $, 
 there does not exist   $ c  \in  \big\{  \mathsf{d}^m_n :   \  (n, m  )  \in  \omega  \times  \mathbb{Z}  \,  \rbrace 
\setminus  \left(    \big\{  \mathsf{d}^m_0 :   \   m  \in   \mathbb{Z}  \,  \rbrace   \cup  \mathcal{V} \left(  F \left[ D_0  \right]  \right)    \right)   $ 
such that  $  c  \preceq   H_q (   \mathsf{g} )  $. 
Hence, by the preceding claim 
\[
\mathsf{g}  \in    \mathsf{Cl}_{   2  \mathsf{rk}_q  \left(   \mathsf{g}   \right)  }  \left(    \mathcal{W}_k  \left( X   ,  \mathsf{d}^0_0  \right)    \right)
\subseteq    \mathsf{Cl}_{   N }  \left(    \mathcal{W}_k  \left( X    ,  \mathsf{d}^0_0  \right)    \right)
\   \mbox{  and  }   H_q  \left(  \mathsf{g} \right)  =  L  \left(   \mathsf{g} \right) 
\    .
\]
Since  $L$ agrees with   $F$ on   $   \mathcal{W}_k  \left( X  \right)    $,  we also have  $  H_q  (w)  =  F(w)  =   L(w)  $. 
Hence  $  L(w)   =  L \left(  \mathsf{g}  \right)   \,   $.
But since  $  L  $  is one-to-one,  from    $    L(w)   =  L \left(  \mathsf{g}  \right)     $  we get 
$  w  =  \mathsf{g}  $, 
which contradicts the assumption that  $  w  \neq  \mathsf{g} $.

Finally, we consider (II). 
By  (2),  we may assume  $  \mathsf{g} ,  w  \not\in  \big\{  \mathsf{d}^m_n  :   \   (n,m)  \in  \omega \times  \mathbb{Z} \,  \big\}   $. 
Since  $ H_q (w)  =  H_q  \left(  \mathsf{g}  \right)  $, 
$  H_q  $  is  an    $  L_{  \mathbb{T} }  $-homomorphism  
and     distinct maps in    $   \lbrace  \mathcal{R}_m   :  \   m  \in  \mathbb{Z} \,   \rbrace   \cup 
 \lbrace  \mathcal{A}_n :  \   n  \in    \omega  \setminus \lbrace 0, 1   \rbrace  \,   \rbrace       $
 have disjoint images,  we have the following cases: 
 \begin{itemize}
\item[(IIa)]   $  w  =  \langle  w_0,  w_1  \rangle^{  \mathcal{B}  }  $   and 
 $ \mathsf{g}   =  \langle   w^{  \prime  }_0,     w^{  \prime  }_1  \rangle^{  \mathcal{B}  }  $
 where  $ w_0, w_1,   w^{  \prime  }_0,   w^{  \prime  }_1  \in D_{q-1} $;

\item[(IIb)]    $  w  =  \langle  w_0,  w_1  \rangle^{  \mathcal{B}  }  $   and 
 $  \mathsf{g}    =  \mathcal{G}  \left( u \right)    $
 where  $ w_0, w_1,   u   \in    D_{q-1}  $
 and    $    \mathcal{G}   \in   \lbrace  \mathcal{R}_m   :  \   m  \in  \mathbb{Z} \,   \rbrace   \cup 
 \lbrace  \mathcal{A}_n :  \   n  \in    \omega  \setminus \lbrace 0, 1   \rbrace  \,   \rbrace       $;

\item[(IIc)]   $  w  =  \mathcal{G} \left( v  \right)   $  and     $  \mathsf{g}  =  \langle   w^{  \prime  }_0,     w^{  \prime  }_1  \rangle^{  \mathcal{B}  }  $
 where  $v ,   w^{  \prime  }_0,   w^{  \prime  }_1  \in D_{q-1}$  and    $    \mathcal{G}   \in   \lbrace  \mathcal{R}_m   :  \   m  \in  \mathbb{Z} \,   \rbrace   \cup 
 \lbrace  \mathcal{A}_n :  \   n  \in    \omega  \setminus \lbrace 0, 1   \rbrace  \,   \rbrace       $;

\item[(IId)]  $  w  =  \mathcal{G} \left( v  \right)   $  and     $\mathsf{g}    =  \mathcal{G} \left( u  \right)     $
 where  $ u,  v   \in D_{q-1} $  and    $    \mathcal{G}   \in   \lbrace  \mathcal{R}_m   :  \   m  \in  \mathbb{Z} \,   \rbrace   \cup 
 \lbrace  \mathcal{A}_n :  \   n  \in    \omega  \setminus \lbrace 0, 1   \rbrace  \,   \rbrace         \,    $.
 \end{itemize}
  From these cases we get 
\[
 \begin{cases}
 \langle  H_{q-1} (w_0) ,  H_{q-1} (w_1)   \rangle^{  \mathcal{B}  } 
 = 
 \langle  H_{q-1}   \left( w^{ \prime }_0 \right)  , H_{q-1}   \left( w^{ \prime }_0 \right)   \rangle^{  \mathcal{B}  } 
 &      \mbox{ in case of (IIa)}
\\
 \langle  H_{q-1} (w_0) ,  H_{q-1} (w_1)   \rangle^{  \mathcal{B}  }    =  \mathcal{G}  \left(  H_{q-1}   (u)  \right)  
  &      \mbox{ in case of (IIb)}
\\
   \mathcal{G}  \left(    H_{q-1}   (v)   \right)    = 
 \langle  H_{q-1}   \left( w^{ \prime }_0 \right)  , H_{q-1}   \left( w^{ \prime }_0 \right)   \rangle^{  \mathcal{B}  } 
 &      \mbox{ in case of (IIc)} 
 \\
  \mathcal{G}  \left(  H_{q-1}   (v)  \right)    =   \mathcal{G}  \left(    H_{q-1}   (u)  \right)     &   \mbox{ in case of  (IId).}
\end{cases}
\]
 Since  $  \langle  \cdot ,  \cdot  \rangle^{  \mathcal{B} }   $ and maps    in    $   \lbrace  \mathcal{R}_m   :  \   m  \in  \mathbb{Z} \,   \rbrace   \cup 
 \lbrace  \mathcal{A}_n :  \   n  \in    \omega  \setminus \lbrace 0, 1   \rbrace  \,   \rbrace       $ 
 are one-to-one
  \[
 \begin{cases}
  H_{q-1} (w_0)  =   H_{q-1}   \left( w^{ \prime }_0 \right)   \;  \wedge  \;     H_{q-1} (w_1)   =  H_{q-1}   \left( w^{ \prime }_0 \right)

 &      \mbox{ in case of (IIa)}
\\
  H_{q-1} (w_0)  = \mathcal{G}^{  \prime }   \left(  H_{q-1}   (u)  \right)    
    \;  \wedge  \;     H_{q-1} (w_1)   =  H_{q-1}   \left( u  \right)
    &      \mbox{ in case of (IIb)}
\\
\mathcal{G}^{  \prime }     \left(    H_{q-1}   (v)   \right)    =    H_{q-1}   \left( w^{ \prime }_0 \right) 
\;  \wedge   \;   
   H_{q-1} (v)   =  H_{q-1}   \left(   w^{ \prime }_1  \right)
 &      \mbox{ in case of (IIc)} 
 \\
 H_{q-1}   (v)     =    H_{q-1}   (u)     &   \mbox{ in case of  (IId).}
\end{cases}
\]
Since  $ H_{ q-1}  $  is  an   $  L_{  \mathbb{T}  } $-embedding, we get 
 \[
 \begin{cases}
w_0 =  w^{ \prime }_0   \;  \wedge  \;   w_1 =  w^{ \prime }_0 

 &      \mbox{ in case of (IIa)}
\\
w_0  = \mathcal{G}^{  \prime }   \left( u   \right)    
    \;  \wedge  \;    w_1  =  u  
    &      \mbox{ in case of (IIb)}
\\
\mathcal{G}^{  \prime }     \left(    v \right)    =      w^{ \prime }_0 
\;  \wedge   \;   
v   =  w^{ \prime }_1 
 &      \mbox{ in case of (IIc)} 
 \\
v =  u    &   \mbox{ in case of  (IId).}
\end{cases}
\]
 But this implies  $  w  = \mathsf{g}   $, which  contradicts the assumption that   $  w  \neq  \mathsf{g}  $.

This completes the proof that  $  H_q  $  is one-to-one.

 The next two claims show that    $  H_q  $   is an  $  L_{  \mathbb{T}  }  $-embedding.

     \begin{quote} {\bf (Claim)} \;\;\;\;\;\;    
     Let   $   w,  u,  v   \in D_p $  be such that  $  H_q (w)  =  \langle  H_q(u) , H_q (v)  \rangle^{  \mathcal{B}  }  $.
 Then,  $  w  =   \langle u, v  \rangle^{  \mathcal{B}  }  $.
 \end{quote}  
  
  We  have two cases: 
  \begin{itemize}
\item[(III)]   $  w  \in  \mathcal{W}_k  \left( X  \right)  $;

\item[(IV)]   $  w    \not\in  \mathcal{W}_k  \left( X  \right)  $. 
\end{itemize} 
  We consider (III). 
  By Lemma    \ref{SectionQNthClureLemma}, 
we have a  \textsf{Good}   one-to-one  $  L_{  \mathbb{T} }^- $-homomorphism   
 $L  :    \mathsf{Cl}_N  \left(    \mathcal{W}_k  \left( X    ,  \mathsf{d}^0_0 \right)    \right)  \to  \mathbb{T} $ 
 that agrees with $F$ on   $  \mathcal{W}_k  \left( X  \right)  $.  
 By (2),  
 since  $  \langle  H_q(u) , H_q (v)  \rangle^{  \mathcal{B}  }    =  H_q  (w) = F(w)   $, 
 there does not exist   $ c  \in  \big\{  \mathsf{d}^m_n :   \  (n, m  )  \in  \omega  \times  \mathbb{Z}  \,  \rbrace 
\setminus  \left(    \big\{  \mathsf{d}^m_0 :   \   m  \in   \mathbb{Z}  \,  \rbrace   \cup  \mathcal{V} \left(  F  \left[ D_0  \right]  \right)    \right)   $ 
such that  $  c  \preceq  H_q (u)     $  or  $  c  \preceq    H_q (v)       \,    $. 
Hence, by  one of the claims above 
\[
u, v  \in    \mathsf{Cl}_{  p  }  \left(    \mathcal{W}_k  \left( X   ,  \mathsf{d}^0_0 \right)    \right)
\subseteq    \mathsf{Cl}_{   N -1 }  \left(    \mathcal{W}_k  \left( X   ,  \mathsf{d}^0_0  \right)    \right)
\,   ,  \    \   
H_q (u)  =  L(u)  
\   \mbox{  and  }   H_q  (v)  =  L(v) 
\]
where   $  p   =  \max \left(  2  \mathsf{rk}_q  \left(  u \right)  \,  ,  \,    2  \mathsf{rk}_q  \left(  u \right)     \right)       \,     $. 
Since  $  L  $  is an  $  L_{  \mathbb{T}  }^- $-homomorphism, 
from $  u, v  \in    \mathsf{Cl}_{  p  }  \left(    \mathcal{W}_k  \left( X   ,  \mathsf{d}^0_0  \right)    \right)  $  we get  
\[
 \langle u, v  \rangle^{  \mathcal{B}  }  \in      \mathsf{Cl}_{   N  }  \left(    \mathcal{W}_k  \left( X   ,  \mathsf{d}^0_0  \right)    \right)
 \     \mbox{  and  }   \  
 L  \left(     \langle u, v  \rangle^{  \mathcal{B}  }    \right)   =  
  \langle  L(u) ,  L(v)   \rangle^{  \mathcal{B}  }   = H_q  (w)  
  \     .
\] 
Since  $L$  and  $F$ agree on   $    \mathcal{W}_k  \left( X  \right)     $,  
  $  H_q (w)  = F(w)  =    L(w)    \,  $.
Since  $    L$  is one-to-one,  
from    $   L  \left(     \langle u, v  \rangle^{  \mathcal{B}  }    \right)   =  L(w)  $  we get  
$  w  =      \langle u, v  \rangle^{  \mathcal{B}  }   \,       $.

Finally,  we consider (IV):    $  w    \not\in  \mathcal{W}_k  \left( X  \right)  $   and  
$  H_q (w)  =  \langle  H_q(u) , H_q (v)  \rangle^{  \mathcal{B}  }  $.
By (2) and the fact that  $  H_q  $  fixes  $  \big\{  \mathsf{d}^m_0  :   \   m  \in  \mathbb{Z} \,      \big\} $, 
we have  $  w  \not\in   \big\{  \mathsf{d}^m_n  :   \   (n, m)     \in    \omega  \times  \mathbb{Z} \,      \big\}   $,  
since we would otherwise  get  $  H_q  (w)  \in  \big\{  \mathsf{d}^m_n  :   \   (n, m)    \in    \omega  \times  \mathbb{Z} \,      \big\}   $.
We thus have the following cases:  
\begin{itemize}
\item[(IVa)]  $     w  =  \langle w_0 ,  w_1   \rangle^{  \mathcal{B}  }  $  where  
$  w_0, w_1  \in  D_{q-1}      \,  $.

\item[(IVb)]  $    w  =  \left(  \mathrm{S}^{  \mathcal{M}  }  \right)^r    \mathcal{G}  \left(   h   \right)   $  where  
$  r  \in  \mathbb{Z} $, 
$   h  \in  D_{q-1}      $
and     $  \mathcal{G}  \in    \lbrace  \mathcal{R}_m   :  \   m  \in  \mathbb{Z} \,   \rbrace   \cup 
 \lbrace  \mathcal{A}_n :  \   n  \in    \omega  \setminus \lbrace 0, 1   \rbrace  \,   \rbrace       \,   $.
\end{itemize}
We consider (IVa). 
Since  $ H_q  $  is an $  L_{  \mathbb{T} }  $-homomorphism 
\[
\langle H_q (w_0)  ,   H_q  (w_1)    \rangle^{  \mathcal{B}  }   =  H_q (w)  = 
 \langle  H_q(u) , H_q (v)  \rangle^{  \mathcal{B}  }       
 \    .  
\]
Since    $  \langle  \cdot  ,  \cdot   \rangle^{  \mathcal{B}  }  $  and  $  H_q  $  are both one-to-one, 
$  w_0  =  u $  and  $  w_ 1 = v  $, 
and hence     $  w  =   \langle u, v  \rangle^{  \mathcal{B}  }  $.

We consider (IVb):   $    w  =  \left(  \mathrm{S}^{  \mathcal{M}  }  \right)^r    \mathcal{G}  \left(   h   \right)   $. 
We have 
\[
\left(  \mathrm{S}^{  \mathcal{M}  }  \right)^r    \mathcal{G}  \left(  H_{q}  \left( h \right)   \right) 
= H_q (w)   = 
 \langle  H_q(u) , H_q (v)  \rangle^{  \mathcal{B}  }       
\]
which implies  
\[
\left(  \mathrm{S}^{  \mathcal{M}  }  \right)^r     H_q  (h)  =  H_q (v)  
\   \mbox{  and  }    \  
\mathcal{G}^{  \prime }  \left(  H_{q}  \left( h \right)   \right)   =  H_q  (u) 
\       .
\]
Since  $  H_q  $ is a one-to-one  $  L_{  \mathbb{T} }  $-homomorphism 
\[
\left(  \mathrm{S}^{  \mathcal{M}  }  \right)^r     h  =  v  
\   \mbox{  and  }    \  
\mathcal{G}^{  \prime }  \left(  H_{q}  \left( h \right)   \right)   =  H_q  (u) 
\       .
\]
We thus just need to show that  $   \mathcal{G}^{  \prime  }    \left(   h   \right)   =  u  $.
Since  $  h   \in D_{ q-1} $,  
if  $  u  \in  D_{  q-1}  $,  then this follows from the fact that $ H_{ q-1}  $  is an $  L_{  \mathbb{T}  }  $-embedding. 
So, assume  $  u \in D_q  \setminus  D_{ q-1}  $. 
Then 
\[
\mathcal{G}^{  \prime }  \left(  H_{q-1}  \left( h \right)   \right)   =  H_q  (u) 
=  \begin{cases} 
\langle   H_{q-1}   \left(  u_0  \right)  ,    H_{q-1}   \left(  u_1  \right)  \rangle^{  \mathcal{B}  }  
&     \mbox{  if  }    u  =  \langle  u_0,  u_1    \rangle^{  \mathcal{B}  }   
\\
\mathcal{G}^{  \prime }  \left(  H_{ q-1} ( u^{  \prime }  )  \right)      &   \mbox{  if   }   u  =   \mathcal{G}^{  \prime }  \left(     u^{  \prime }   \right)   
\end{cases}
\]
where we have used that distinct maps in   $   \lbrace  \mathcal{R}_m   :  \   m  \in  \mathbb{Z} \,   \rbrace   \cup 
 \lbrace  \mathcal{A}_n :  \   n  \in    \omega  \setminus \lbrace 0, 1   \rbrace  \,   \rbrace       $
 has disjoint images. 
 If    $  \mathcal{G}^{  \prime }  \left(  H_{q-1}  \left( h \right)   \right)   =    \mathcal{G}^{  \prime }  \left(  H_{ q-1} ( u^{  \prime }  )  \right)   $,   then $  h = u^{  \prime }  $  since    $   \mathcal{G}^{  \prime }   $  and  $  H_{ q-1} $ are both one-to-one. 
 This in turn implies    $    w  =   \langle u, v  \rangle^{  \mathcal{B}  }     \,    $.
 Assume 
 $  \mathcal{G}^{  \prime }  \left(  H_{q-1}  \left( h \right)   \right)    =  
 \langle   H_{q-1}   \left(  u_0  \right)  ,    H_{q-1}   \left(  u_1  \right)  \rangle^{  \mathcal{B}  }      \,     $. 
 Then 
 \[
  \mathcal{G}^{  \prime  \prime  }  \left(  H_{q-1}  \left( h \right)   \right)     =   H_{q-1}   \left(  u_0  \right)   
  \    \mbox{  and  }    \   
   H_{q-1}  \left( h \right)   =   H_{q-1}   \left(  u_1  \right) 
   \     .
 \]
Since  $  H_{ q-1}  $  is an $  L_{  \mathbb{T} }  $-embedding,   we get 
 $ u_0  =   \mathcal{G}^{  \prime   \prime  }    \left(   h   \right)   $ and  $  u_1 = h $,  
 and hence   $  u =   \mathcal{G}^{  \prime  }    \left(   h   \right)   $, which in turn implies 
   $    w  =   \langle u, v  \rangle^{  \mathcal{B}  }     \,    $.

       \begin{quote} {\bf (Claim)} \;\;\;\;\;\;    
       Let   $  \mathcal{G}  \in  \lbrace  \mathcal{R}_m  :   \   m  \in  \mathbb{Z}  \,   \rbrace   \cup  
       \lbrace  \mathcal{A}_n  :   \     n   \in  \omega  \setminus  \lbrace 0, 1,  2  \rbrace    \,   \rbrace      \,      $.
     Let   $   w,  v   \in D_q $  be such that  $  H_q (w)  = \mathcal{G}  \left(    H_q (v)   \right)   $.
 Then,  $  w  =  \mathcal{G}  \left(    v  \right)     $.
 \end{quote}  
  
  We prove the claim by induction on  $  \mathsf{rk}_q  \left(  w  \right)  $. 
  We consider the base  case    $   \mathsf{rk}_q  \left(  w  \right)    = 0  $,  that is, 
  $  w  \in  \mathcal{W}_k  \left(  X   \right)   $. 
  We have  $  H_q (w)  =  F(w)  $. 
  Let    $  v  \in D_q  $  such that   $  H_q (w)  = \mathcal{G}  \left(    H_q (v)   \right)   $. 
  Since  $F :  \mathcal{W}_{k+N }  \left(  X   \right)     \to  \mathbb{T}  $  is an  $ L_{  \mathbb{T}  }  $-embedding
  and  $  N   >  0  $,  
  from  $   F(w)  = \mathcal{G}  \left(    H_q (v)   \right)   $  we get that   there must also exist  
  $  u  \in    \mathcal{W}_{k+1 }  \left(  X   \right)   $  such that  
  $  w  =   \mathcal{G}  \left(    u   \right)   $. 
  Hence 
  \[
 \mathcal{G}  \left(   F (u)   \right)   =  F(w)   =   H_q (w)  = \mathcal{G}  \left(    H_q (v)   \right) 
 \      .
  \]
  Since  $   \mathcal{G}   $  is one-to-one
\[
 F(u)  =  H_q (v)   
 \        .
 \]
    We just need to show that  $  u  =  v  $, and it will then follow that   $  w  =  \mathcal{G}  \left(    v  \right)     $.
    By Lemma     \ref{SectionQNthClureLemma},  
    we have a  \textsf{Good} one-to-one  $  L_{  \mathbb{T} }^-$-homomorphism 
    $  L  :   \mathsf{Cl}_{  N-1 }   \left(    \mathcal{W}_{k+1}   \left(  X   ,  \mathsf{d}^0_0  \right)      \right)      \to  \mathbb{T}  $  
    that agrees with $F$ on  $   \mathcal{W}_{k+1}   \left(  X   \right)    $.
    By    the  claim we used to show that  $  H_q  $  is one-to-one
    \[
    v  \in    \mathsf{Cl}_{  2  \mathsf{rk} \left(  v \right)  }   \left(    \mathcal{W}_k  \left(  X   ,  \mathsf{d}^0_0  \right)      \right)   
    \subseteq  
    \mathsf{Cl}_{N-1}  \left(    \mathcal{W}_k  \left(  X   ,  \mathsf{d}^0_0  \right)      \right)   
    \subseteq    \mathsf{Cl}_{  N-1 }   \left(    \mathcal{W}_{k+1}   \left(  X   ,  \mathsf{d}^0_0  \right)      \right)   
    \      .
    \]
Since  $  L$  and  $  H_q  $  are both       $  L_{  \mathbb{T} }^-$-homomorphisms, 
we must have  $  L(v)  = H_q  (v)  $. 
Since  $L$ and  $F$ agree on   $   \mathcal{W}_{k+1}   \left(  X   \right)    $, we have  $ F(u)  =  L(u)  $. 
That is 
\[
L(u)  =  F(w)  =  H_q ( v)  =  L(v) 
\   .
\]
Since  $L$ is one-to-one,  $  u =  v  $, and hence  $  w  =  \mathcal{G}  \left(    v  \right)     $.

  We consider the case    $    \mathsf{rk}_q  \left(  w  \right)   >  0 $. 
  We have the following cases: 
  \begin{itemize}
  \item[(a)]  $  w  =  \langle  w_0,  w_1  \rangle^{  \mathcal{B}  }  $  where  $  w_0,  w_1  \in  D_{ q-1}  $

  \item[(b)]  $  w  =  \mathcal{H} \left( u  \right)   $   where  $  u \in D_{ q-1}  $  and  
  $  \mathcal{H}  \in     \lbrace  \mathcal{R}_m  :   \   m  \in  \mathbb{Z}  \,   \rbrace   \cup  
       \lbrace  \mathcal{A}_n  :   \     n   \in  \omega  \setminus  \lbrace 0, 1,  2  \rbrace    \,   \rbrace      \,      $.
\end{itemize}    
We consider (a). 
\[
\mathcal{G}^{  \prime  }  =  \begin{cases}
\mathcal{R}_m    &    \mbox{  if  }   \mathcal{G}  =  \mathcal{R}_m  \mbox{  where  }   m  \in  \mathbb{Z}
\\
\mathcal{A}_n    &    \mbox{  if  }   \mathcal{G}  =  \mathcal{A}_n   \mbox{  where  }   n  \in  \omega  \setminus  \lbrace 0, 1,  2  \rbrace 
\     .
\end{cases}
\]
We have 
\[
\langle  H_q  (w_0)  ,  H_q (w_1)   \rangle^{  \mathcal{B}  }    =  
H_q  (w)    =  
\langle   \mathcal{G}^{  \prime  }   \left(  H_q (v)  \right)    \,   ,  \,    H_q (v)    \rangle^{  \mathcal{B}  } 
\       .
\]
Since  $  \langle  \cdot  ,  \cdot  \rangle^{  \mathcal{B}  }   $  is one-to-one
\[
 H_q  (w_0)    =    \mathcal{G}^{  \prime  }   \left(  H_q (v)  \right)       \     \mbox{  and  }    \   
 H_q (w_1)     =   H_q (v)   
 \   .
\]
Since  $  H_q  $  is one-to-one,  from   $   H_q (w_1)     =   H_q (v)    $  we get  $  w_1  =  v  $. 
By the induction hypothesis,  from  $   H_q  (w_0)    =    \mathcal{G}^{  \prime  }   \left(  H_q (v)  \right)    $  we get 
$  w_0  =    \mathcal{G}^{  \prime  }   \left(  v  \right)   $. 
It follows from this that    $  w  =  \mathcal{G}  \left(    v  \right)     $.

We consider (b). 
We have 
\[
\mathcal{H}  \left(  H_q (u)  \right)    =  H_q (w)   =     \mathcal{G}  \left(    H_q (v)   \right)    
\      .
\]
We have  $  \mathcal{H}   =  \mathcal{G}   $  since distinct maps in 
$  \lbrace  \mathcal{R}_m  :   \   m  \in  \mathbb{Z}  \,   \rbrace   \cup  
       \lbrace  \mathcal{A}_n  :   \     n   \in  \omega  \setminus  \lbrace 0, 1,  2  \rbrace    \,   \rbrace    $
       have disjoint images. 
 Since      $  \mathcal{G}   $ is one-to-one, 
 from     $  \mathcal{G}  \left(  H_q (u)  \right)    =  \mathcal{G}  \left(    H_q (v)   \right)      $  
 we get  $  H_q (u)   =  H_q (v)   $.
 Since  $  H_q  $  is one-to-one,   $  u  =  v  $. 
 It follows from this that    $  w  =  \mathcal{G}  \left(    v  \right)     $.
\end{proof}

\section{Proof of Theorem \ref{QPLUSCANTORPAIRING}}

By   looking at the statement of  Lemma  \ref{SectionQFirstExtensionLemma}, 
we will need to start  with very large sets in order to apply the lemma  $n$  times. 
For each  $ 0 <   n  \in  \omega $,  we define the function  $  \rho_n  :   \omega  \to  \omega $  by recursion  
\[
\rho_n (0) =   0
\     \mbox{  and  }       \    
\rho_n (k+1)  =  2  ( n +2)      \left(   \rho_n ( k ) +1       \right)     2^{   \rho_n ( k ) +2}   
\]
The only property of  $  \rho_n $  we will use is the following lemma.

\begin{lemma}   %\label{SectionTCountingLemma}
Let  $  0 <  n  <   \omega   $.
If   $X  \subseteq   \mathbb{T} $  is  a  finite set of cardinality $  \leq  n +2  $,  then 
\[
2 \vert   \mathcal{V}_{  \rho_n ( k ) +1   }        \left(   X\right)    \vert   \leq    \rho_n (k+1)   -   \rho_n ( k )  - 1  
\   \mbox{  for  all }  k  \in  \omega   
\      .
\]
\end{lemma}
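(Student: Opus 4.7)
The claim reduces to a simple geometric tree-growth bound for $|\mathcal{V}_j(X)|$ combined with an elementary arithmetic check against the recursive definition of $\rho_n$. My plan has two steps.

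First, I would prove by induction on $j$ that for every finite $X \subseteq \mathbb{T}$,
$$|\mathcal{V}_j(X)| \;\leq\; (2^{j+1}-1)\,|X|.$$
The key point is that, since $\langle \cdot,\cdot\rangle^{\mathcal{B}}$ is one-to-one on $\mathbb{T}$ and the constants $\mathsf{d}^m_n$ lie outside its image, each $w \in \mathbb{T}$ has at most one decomposition $w = \langle w_0, w_1\rangle^{\mathcal{B}}$; hence $w$ contributes at most two new elements to the next layer. Moreover, once $w \in \mathcal{V}_{j-1}(X)$ has been ``opened up'' at stage $j$, both $w_0, w_1$ already lie in $\mathcal{V}_j(X)$ and produce no fresh elements at stage $j+1$. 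Writing $B_j := \mathcal{V}_j(X) \setminus \mathcal{V}_{j-1}(X)$ with $\mathcal{V}_{-1}(X) := \emptyset$, these remarks give $|B_{j+1}| \leq 2|B_j|$, hence $|B_j| \leq 2^j|X|$, and summing the geometric series gives the displayed bound.

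Second, substituting $|X| \leq n+2$ and $j = \rho_n(k)+1$, I obtain
$$2|\mathcal{V}_{\rho_n(k)+1}(X)| \;\leq\; 2(n+2)\bigl(2^{\rho_n(k)+2}-1\bigr),$$
which I would compare with
$$\rho_n(k+1) - \rho_n(k) - 1 \;=\; 2(n+2)\,\rho_n(k)\cdot 2^{\rho_n(k)+2} + 2(n+2)\cdot 2^{\rho_n(k)+2} - \rho_n(k) - 1,$$
obtained directly from $\rho_n(k+1) = 2(n+2)(\rho_n(k)+1)2^{\rho_n(k)+2}$. Subtracting reduces the target inequality to
$$2(n+2)\,\rho_n(k)\cdot 2^{\rho_n(k)+2} + 2(n+2) \;\geq\; \rho_n(k) + 1,$$
which is immediate when $\rho_n(k) = 0$ and, for $\rho_n(k) \geq 1$, follows from the crude estimate $2(n+2)\cdot 2^{\rho_n(k)+2} \geq 24$ (since $n \geq 1$).

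I do not anticipate any real obstacle: the recursive definition of $\rho_n$ appears to have been engineered precisely so that the factor $\rho_n(k)+1$ absorbs the $(2^{\rho_n(k)+2}-1)$ blow-up coming from step one. The only mildly non-obvious ingredient is the ``already opened'' remark in step one, which strengthens the naive bound $|\mathcal{V}_j(X)| \leq 3^j|X|$ to the tighter $(2^{j+1}-1)|X|$; even the naive bound would suffice here, provided one thickens $\rho_n$ by a harmless constant factor.
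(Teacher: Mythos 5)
Your proposal is correct and follows essentially the same route as the paper: the displayed chain of inequalities in the paper's proof uses precisely the bound $|\mathcal{V}_j(X)|\leq(2^{j+1}-1)|X|$ with $j=\rho_n(k)+1$ and then performs the same arithmetic comparison against $\rho_n(k+1)$. The one difference is that the paper simply asserts that first combinatorial bound, whereas you supply the short inductive argument via $|B_{j+1}|\leq 2|B_j|$ — a worthwhile expansion, since it is the only non-arithmetic content of the lemma.
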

\begin{proof}

We have  
\begin{align*}
2  \vert     \mathcal{V}_{  \rho_n ( k ) +1   }     \left(   X  \right)    \vert  
&  \leq  2    \vert   X  \vert       \left(  2^{   \rho_n ( k ) +2} - 1 \right)  
  \\
    &  \leq  2  (n+2)       \left(  2^{   \rho_n ( k ) +2} - 1 \right)  
  \\
  &  \leq  2   (n+2)          \left(   \rho_n ( k ) +1       \right)     2^{   \rho_n ( k ) +2}   -  \left(   \rho_n (k) + 1  \right) 
  \\
  &   =        \rho_n ( k+1 )    -  \left(   \rho_n (k) + 1  \right) 
\        .    & \qedhere
\end{align*}

\end{proof}

We finally have everything we need to complete the proof of   Theorem  \ref{QPLUSCANTORPAIRING}.

\begin{proof}[Proof of  Theorem  \ref{QPLUSCANTORPAIRING}]

We define the sets  $  A_n  $  as follows: 
\[
A_n  :=   \big\{   \mathsf{d}^0_{ k }  :   \   k \in  \lbrace 1, 2,  \ldots ,  \rho_n  \left( n  +1 \right)    \rbrace   \   \big\} 
\    \    \mbox{  for   }    n  \in  \omega 
\         . 
\]
We don't include   $  \mathsf{d}^0_0 $   since  \textsf{Good} maps fix  
$  \big\{   \mathsf{d}^m_{ 0 }  :    \    m  \in  \mathbb{Z}   \,   \big\}    \,   $.

Fix $ n  \in  \omega  $.   
Fix  an element $ w  $  of   $  \mathcal{M} $. 
To complete the proof,  we need to show that there exists  $  a_0   \in  A_n  $  and  $  b_0  \in M \setminus A_n $  such that 
$  \left( \mathcal{M}   \, , \, w,     a_0    \right)  \equiv_n    \left( \mathcal{M}   \, , \,  w   ,   b_0    \right)    $.  
We have two  cases: 
 \begin{enumerate}
\item     $  w     \in  \mathbb{N}  $

\item $   w     \in   \mathbb{T}    \,       $. 
\end{enumerate}
We consider case (1). 
Let  $  a_0  :=   \mathsf{d}^0_1 $  and  let  $  a_0  :=   \mathsf{d}^0_p  $  where  
$ p  >   \rho_n  \left( n+1  \right)  $. 
Then,    $  a_0   \in  A_n  $  and  $  b_0    \not\in  A_n  $. 
We  define by recursion an   automorphism  $    \tau :  \mathcal{M}   \to   \mathcal{M} $  
such that  $  \tau  \left(   a_0  \right)   =  b_0   $: 
\[
\tau  (  g  )  =  \begin{cases}
g        &   \mbox{ if  }      g  \in  \mathbb{N}  \cup  \big\{ \mathsf{d}^m_n  :   \   n\neq  1,  p   \,  ,    \    (n,m)  \in  \omega  \times  \mathbb{Z}  \,   \rbrace  
\\
 \mathsf{d}^m_p     &     \mbox{  if   }  g  =    \mathsf{d}^m_1     \    \mbox{  and   }    m  \in  \mathbb{Z}
 \\
 \mathsf{d}^m_1     &     \mbox{  if   }  g  =    \mathsf{d}^m_p      \    \mbox{  and   }    m  \in  \mathbb{Z}  
 \\
 \langle  \tau \left(  g_0  \right)    \,  ,  \,    \tau \left(  g_1  \right)   \rangle^{  \mathcal{B} }  
         &   \mbox{  if   }     \mathbb{T}  \ni    g  =  \langle  g_0,  g_1     \rangle^{  \mathcal{B} }   
  \\
\mathcal{R}_m  \left(  \tau \left(  v  \right)     \right)  
         &   \mbox{  if   }   \mathbb{T}  \ni    g  =  \mathcal{R}_m  \left(  v    \right)      \    \mbox{  and   }    m  \in  \mathbb{Z}
         \        .
\end{cases}
\]
This is clearly a bijection that  fixes   $  \mathbb{N} $. 
Furthermore, the restriction of  $  \tau  $  to  $  \mathbb{T} $  is  a  \textsf{Good} $  L_{  \mathbb{T} } $-homomorphism.
This shows that 
 $  \left( \mathcal{M}   \, , \,  w  , a_0     \right)   \simeq   
   \left( \mathcal{M}   \, , \,   w  , b_0    \right)    $, 
 and hence  $  \left( \mathcal{M}   \, , \,    w  , a_0   \right)  \equiv_n     \left( \mathcal{M}   \, , \,   w , b_0   \right)    $.

We consider case (2).
Recall that    $  \mathrm{S}^{  \mathcal{M}  }   \left(  \mathsf{d}^m_n  \right)  =   \mathsf{d}^{m+1}_n  $  and 
$  \mathrm{P}^{  \mathcal{M}  }   \left(  \mathsf{d}^m_n  \right)  =   \mathsf{d}^{m-1}_n  $ 
for all $  (n,m )  \in  \omega  \times  \mathbb{Z} $. 
Since  $  \vert  \mathcal{V}_{   \rho_n (n  ) + 1 }  \left(   w  \right)    \vert + 1   <   \rho_n  (n+1)    $, 
there exist  $  q^{  \star }  \in   \lbrace 1, 2,  \ldots ,  \rho_n  \left( n  + 1 \right)     $  
and  $  p^{  \star }  \in  \omega  \setminus   \lbrace 0, 1, 2,  \ldots ,  \rho_n  \left( n  +1 \right)    \rbrace  $  such that  
\[
\left(   \forall  m  \in  \mathbb{Z} \right)   \left[   \ 
   \mathsf{d}^m_{  q^{  \star }  }    \not\in    \mathcal{V}_{   \rho_n (n  ) + 1 }  \left(   w \right)  
   \   \wedge    \   
      \mathsf{d}^m_{  p^{  \star }  }    \not\in    \mathcal{V}_{   \rho_n (n  ) + 1 }  \left(   w  \right)  
\         \right]
\         .
\]
Since     $  \mathcal{W}_{   \rho_n (n  ) + 1 }  \left(   w  \right)   $  is the closure of  
$   \mathcal{V}_{   \rho_n (n  ) + 1 }  \left(   w  \right)    $  under   
$  \mathrm{S}^{  \mathcal{M}  }  $  and     $  \mathrm{P}^{  \mathcal{M}  }  $
\[
\left(   \forall  m  \in  \mathbb{Z} \right)   \left[   \ 
   \mathsf{d}^m_{  q^{  \star }  }    \not\in    \mathcal{W}_{   \rho_n (n  ) + 1 }  \left(  w  \right)  
   \   \wedge    \   
      \mathsf{d}^m_{  p^{  \star }  }    \not\in    \mathcal{W}_{   \rho_n (n  ) + 1 }  \left(   w   \right)  
\         \right]
\         .   \tag{*}
\]
Observe that  $  \chi  \left(  \mathsf{d}^m_{  q^{  \star }  }    \right)  =  m =   \chi  \left(  \mathsf{d}^m_{  p^{  \star }  }    \right)      $
for all $  m \in  \mathbb{Z}  $. 
Let  
\[
a_0  :=      \mathsf{d}^0_{  q^{  \star }  }    
\       \         \mbox{  and   }     \       \  
b_0  :=      \mathsf{d}^0_{  p^{  \star }  }   
\       .
\]
Then,    $  a_0  \in A_n  $  and    $ b_0   \not\in  A_n     \,     $.
It follows from (*) that we have  a \textsf{Good}  $ L_{  \mathbb{T} }  $-isomorphism 
\[
H  :     \mathcal{W}_{ \rho_n (n)    }   \left(   w , a_0  \right)   \to     \mathcal{W}_{ \rho_n (n)   }   \left(  w, b_0  \right)    
 \   \mbox{  where  }   \   
H  \left(  w\right)  =  w
\, ,     \   
H  \left(  a_0  \right)   =    b_0 
\]  
since  
$
 \mathcal{W}_{ \rho_n (n)   }   \left(  w  ,   a_0   \right)   =  
  \mathcal{W}_{ \rho_n (n)   }   \left(  w  \right)    \cup   
  \big\{  \mathsf{d}^m_{  q^{  \star }  }     :    \    m  \in  \mathbb{Z}   \,    \big\}    
$
and 
$
 \mathcal{W}_{ \rho_n (n)   }   \left(  w  ,   b_0   \right)   =  
  \mathcal{W}_{ \rho_n (n)   }   \left(  w  \right)    \cup   
  \big\{  \mathsf{d}^m_{  p^{  \star }  }     :    \    m  \in  \mathbb{Z}   \,    \big\}    
\,     $. 
We  define  $H  $   as follows  
\[
H  \left(  g   \right)    =  \begin{cases} 
g      &     \mbox{  if   }     g  \in      \mathcal{W}_{ \rho_n (n)   }   \left(   w   \right)    
\\
 \mathsf{d}^m_{  p^{  \star }  }       &     \mbox{  if   }   w  =   \mathsf{d}^m_{  q^{  \star }  } 
 \       .
\end{cases}
\]
This map is an $L_{  \mathbb{T} }  $-embedding  since  
\[
  \mathcal{W}_{ \rho_n (n) +1   }   \left(  w  \right)    \cap   
  \big\{  \mathsf{d}^m_{  q^{  \star }  }     :    \    m  \in  \mathbb{Z}   \,    \big\}    =  \emptyset
  \       \mbox{  and   }     \    
    \mathcal{W}_{ \rho_n (n) +1   }   \left(  w  \right)    \cap   
  \big\{  \mathsf{d}^m_{  p^{  \star }  }     :    \    m  \in  \mathbb{Z}   \,    \big\}    =  \emptyset
  \        .
\]

We prove  $  \left( \mathcal{M}   \, , \,  w  , a_0      \right)  \equiv_n     
\left( \mathcal{M}   \, , \,   w , b_0     \right)    $ 
by showing that  player  $ \exists $ has a winning strategy for the       Ehrenfeucht-Fra\"iss\'e game
$   \mathsf{EF}_{ n  }  \left[   \,  
  \left(   \,     \mathcal{M}    \, , \,  w    \,   ,  \,       a_0     \,   \right)         \,   ,   \,   
  \left(   \,     \mathcal{M}    \, , \,    w     \,   ,  \,      b_0      \,  \right)        \,        \right]        \,  $.
Player  $  \exists  $  ensures that if 
$  (a_1, b_1) ,  (a_2,  b_2)   \ldots ,  (a_n,  b_n)    $  is the play after  $  n  $  rounds,  
then the following holds:  
\begin{enumerate}
\item   For all  $  i   \in  \lbrace  1, 2,  \ldots ,  n \rbrace  $,  if   $  a_i  \in  \mathbb{N}  $,  then $  a_i  =  b_i     \,  $.

\item   For all  $  i   \in  \lbrace  1, 2,  \ldots ,  n \rbrace  $,  if   $  b_i  \in  \mathbb{N}  $,  then $  a_i  =  b_i     \,  $.

\item  For each  $  m  \in  \lbrace 1, 2,  \ldots ,  n  \rbrace  $, 
    let   $ J_m    \subseteq   \lbrace  0, 1,  \ldots ,  m  \rbrace  $  be the set of all indexes  $  j  \in     \lbrace 0, 1,  \ldots ,  m  \rbrace $  such that  $  a_j  \in  \mathbb{T} $   (and hence  also $  b_j  \in    \mathbb{T}  $).
    For each  $ m    \in   \lbrace 0, 1,  \ldots ,  n  \rbrace   $, 
  there exists  a  \textsf{Good} $  L_{  \mathbb{T} } $-isomorphism 
\[
   F_m   : \mathcal{W}_{  \rho_n  \left(  n -m  \right)  }  \left(   w ,      \lbrace  a_j  :   \  j  \in  J_m  \rbrace   \right) 
  \to 
 \mathcal{W}_{  \rho_n  \left(  n -m  \right)  }  \left(    w, ,      \lbrace  b_j  :   \  j  \in  J_m  \rbrace   \right)    
     \] 
such that    $    F_m  \left(   w \right)  =  w    $  and    $   F_m  \left(   a_j    \right)  =  b_j   $  for all   $  j  \in J_m   $. 
\end{enumerate}
At stage  $m+1  \leq  n $, if player  $  \forall  $  picks  $  a_{ m+1}  \in  \mathbb{T} $, 
then  player  $  \exists  $  applies  $  F_m  $  to Lemma \ref{SectionQFirstExtensionLemma}   and gets  $  b_{ m+1} \in \mathbb{T} $  and  $  F_{m+1} $. 
If  player  $  \forall  $  picks  $  b_{ m+1}  \in  \mathbb{T} $, 
then  player  $  \exists  $  applies  $  F_m^{-1}  $  to Lemma  \ref{SectionQFirstExtensionLemma}  and gets  $  a_{ m+1}  \in \mathbb{T}  $  and  $  F_{m+1}^{-1} $. 
Player $  \exists  $  wins the game since  
the map  $  G  :  \mathbb{N}  \cup   \mathcal{W}_{ 0 }  \left(     w,   \lbrace  a_j  :   \  j  \in  J_n  \rbrace    \right)   \to  M  $  
that  fixes    $  \mathbb{N}    $  and agrees with  $  F_n  $  on 
$   \mathcal{W}_{ 0 }  \left(    w,      \lbrace  a_j  :   \  j  \in  J_n  \rbrace   \right)   =  
\lbrace  w  \rbrace  \cup   \lbrace  a_j  :   \  j  \in  J_n  \rbrace        $
  is a partial embedding   into   $  \mathcal{M}  $.

This completes the proof of    Theorem  \ref{QPLUSCANTORPAIRING}. 
\end{proof}

\section*{ }

\end{document}